\DeclareFontFamily{U}{min}{}
\DeclareFontShape{U}{min}{m}{n}{<-> udmj30}{}
\definecolor{myurlcolor}{rgb}{0.1,0.1,0.8}
\definecolor{mylinkcolor}{rgb}{0.05,0.05,0.4}
\newtheorem{lemma}{Lemma}
\numberwithin{lemma}{section}
\newtheorem{proposition}[lemma]{Proposition}
\newtheorem{theorem}[lemma]{Theorem}
\newtheorem{corollary}[lemma]{Corollary}
\theoremstyle{definition}
\newtheorem{definition}[lemma]{Definition}
\newtheorem{example}[lemma]{Example}
\newtheorem{remark}[lemma]{Remark}
  \newlength\squareheight
  \newcommand\squareslash{\tikz{\draw (0,0) rectangle (\squareheight,\squareheight);\draw(0,0) -- (\squareheight,\squareheight)}}
  \DeclareMathOperator\squarediv{\squareslash}
  \newlength\subsquareheight
\newcommand{\demph}[1]{\emph{#1}}
\newcommand{\xto}{\xrightarrow}
\newcommand{\Nv}{\mathcal{N}}
\newcommand{\OP}{\mathcal{OP}}
\newcommand{\mathbbZ}{\mathbb{Z}}
\newcommand{\cn}[1]{\mathbf{#1}}
\newcommand{\cl}[1]{\mathcal{#1}}
\newcommand{\colim}{\mathrm{colim}}
\DeclareMathOperator{\sign}{sign}
\DeclareMathOperator{\tor}{Tor}
\newcommand{\m}{\mathfrak{m}}
\newcommand{\NMet}{\mathbb{N}\cn{Met}}
\newcommand{\DiGraph}{\cn{DiGraph}}
\newcommand{\bpc}{\mathrm{RC}}
\newcommand{\RC}{\mathrm{RC}}
\newcommand{\RH}{\mathrm{RH}}
\newcommand{\MC}{\mathrm{MC}}
\newcommand{\MH}{\mathrm{MH}}
\newcommand{\PH}{\mathrm{PH}}
\newcommand{\OPH}{\mathrm{PH}}
\newcommand{\MPSS}{E}
\title[Bigraded path homology and the MPSS]{Bigraded path homology and the magnitude-path spectral sequence}
\author{Richard Hepworth}
\address{Institute of Mathematics\\ University of Aberdeen\\ Scotland}
\email{r.hepworth@abdn.ac.uk}
\author{Emily Roff}
\address{School of Mathematics\\ University of Edinburgh\\ Scotland}
\email{emily.roff@ed.ac.uk}
\subjclass[2020]{
Primary 
18G90, %Other (co)homology theories (category-theoretic aspects) 
05C25; %Graphs and abstract algebra (groups, rings, fields, etc.) 
Secondary 
05C31, %Graph polynomials
05C38, %Paths and cycles
18G35, %Chain complexes (category-theoretic aspects), dg categories
55U35. % 	Abstract and axiomatic homotopy theory in algebraic topology
%18N40%  	Homotopical algebra, Quillen model categories, derivators 
}
\keywords{Directed graphs, magnitude homology, path homology}
\begin{document}

\begin{abstract}
Two important invariants of directed graphs, namely magnitude homology and path homology, have recently been shown to be intimately connected: there is a \emph{magnitude-path spectral sequence} or \emph{MPSS} in which magnitude homology appears as the first page, and in which path homology appears as an axis of the second page.
In this paper we study the homological and computational properties of the spectral sequence, and in particular of the full second page, which we now call \emph{bigraded path homology}.
We demonstrate that every page of the MPSS deserves to be regarded as a homology theory in its own right, satisfying excision and K\"unneth theorems (along with a homotopy invariance property already established by Asao), and that magnitude homology and bigraded path homology also satisfy Mayer--Vietoris theorems.
We construct a homotopy theory of graphs (in the form of a cofibration category structure) in which weak equivalences are the maps inducing isomorphisms on bigraded path homology, strictly refining an existing structure based on ordinary path homology.
And we provide complete computations of the MPSS for two important families of graphs---the directed and bi-directed cycles---which demonstrate the power of both the MPSS, and bigraded path homology in particular, to distinguish graphs that ordinary path homology cannot.
\end{abstract}

\maketitle

\setcounter{tocdepth}{1}
\tableofcontents

%%%%%%%%%%%%%%%%%%%%%%%%%%%%%%%%%%%%%%%%%%%%%%%%%%%%
\section{Introduction}

There is, by now, an abundance of homological invariants of directed graphs, including path homology~\cite{GLMY2013,PH-homotopy}, magnitude homology \cite{HepworthWillerton2017} and reachability homology \cite{HepworthRoff2023}. In the context of undirected graphs, even more invariants are applicable, including clique homology~\cite{DeMeyerDeMeyer, PetriEtAl} and discrete cubical homology \cite{Barcelo-etal-2014}. 
These theories have some notable successes,
such as work of Asao relating magnitude homology to curvature of metric spaces~\cite{Asao-curvature};
work of Asao, Hiraoka and Kanazawa relating magnitude homology to girth of graphs~\cite{Asao-curvature,AsaoGirth};
work of Tajima and Yoshinaga developing a corresponding
magnitude homotopy type~\cite{TajimaYoshinaga};
a version of discrete Morse theory relevant to path homology developed by Lin, Wang and Yau~\cite{PH-discretemorse};
and path homology analogues of classical geometric results developed by Kempton, M\"unch and Yau \cite{PH-curvature}.
Many of these homology theories possess formal properties analogous to the Eilenberg--Steenrod axioms, yet they tend to disagree when evaluated on even simple classes of graphs. It is desirable, then, to understand the relationships among them, where such relationships exist.

Asao has recently shown that two of these homology theories---namely magnitude homology and path homology---are indeed closely related, appearing on consecutive pages in a certain spectral sequence \cite{Asao-path}. We follow Di \emph{et al}~\cite{DIMZ} in referring to that spectral sequence as the \emph{magnitude-path spectral sequence} or \emph{MPSS}. Page 1 of the MPSS is exactly magnitude homology, while page 2 contains path homology on its horizontal axis, and the target of the spectral sequence is reachability homology \cite{HepworthRoff2023}. The MPSS thus encompasses three existing invariants of directed graphs and clarifies their relationships, while adding infinitely many new ones. In particular, it extends path homology (the horizontal axis of the second page) to a bigraded theory (the entire second page) that we now call \emph{bigraded path homology}.

The objective of this paper is to demonstrate the properties, strength and usefulness of the MPSS, and of bigraded path homology in particular. Asao has shown that each page of the MPSS has a homotopy-invariance property, whose strength increases as one turns through the pages of the sequence~\cite{Asao-filtered}. Building on that observation, we demonstrate that the spectral sequence as a whole possesses formal properties that justify calling \emph{every} page a homology theory for directed graphs---each one distinct, but systematically related. Concerning the MPSS, our main results are as follows.
    \begin{itemize}
    \item[(A)] Every page of the MPSS satisfies a K\"unneth theorem with respect to the box product (\Cref{thm:kunneth_box_SS}).
    \item[(B)] Every page of the MPSS satisfies an excision theorem with respect to a class of subgraph inclusions first studied in \cite{CDKOSW} (\Cref{excision-all}).
\end{itemize}
%NEW
In particular these two properties all hold for magnitude homology and bigraded path homology. Additionally, magnitude homology and bigraded path homology each satisfy a Mayer--Vietoris theorem (Theorems \ref{Mayer--Vietoris-magnitude} and \ref{Mayer--Vietoris-path}) that we are able to deduce from the excision property using straightforward homological algebra.
%OLD
%We also provide a new and detailed proof of the fact (which appeared first in \cite{DIMZ}) that the spectral sequence preserves filtered colimits:
%\begin{itemize}
%    \item[(C)] Every page of the MPSS is a finitary functor on the category of directed graphs (\Cref{thm:MPSS_filtered_colimits}, proved in \Cref{sec:finitary}).
%    \end{itemize}
%In particular these three properties all hold for magnitude homology and bigraded path homology. Additionally, magnitude homology and bigraded path homology each satisfy a Mayer--Vietoris theorem (Theorems \ref{Mayer--Vietoris-magnitude} and \ref{Mayer--Vietoris-path}) that we are able to deduce from the excision property using straightforward homological algebra.

%OLD
%The diversity of homological viewpoints on directed graphs has motivated a recent drive towards consolidation using formal homotopy theory, and \Cref{sec:cofibration category} of this paper contributes to that development. In \cite{CDKOSW}, Carranza \textit{et al} prove that the category of directed graphs carries a cofibration category structure for which the weak equivalences are maps inducing isomorphisms on path homology. By specializing properties (A)--(C) to bigraded path homology, we can prove that their structure admits a natural refinement:
%NEW
The diversity of homological viewpoints on directed graphs has motivated a recent drive towards consolidation using formal homotopy theory, and \Cref{sec:cofibration category} of this paper contributes to that development. In \cite{CDKOSW}, Carranza \textit{et al} prove that the category of directed graphs carries a cofibration category structure for which the weak equivalences are maps inducing isomorphisms on path homology. By specializing properties (A) and (B) to bigraded path homology, we can prove that their structure admits a natural refinement:
\begin{itemize}
%OLD
%    \item[(D)]
%NEW
    \item[(C)]
    The category of directed graphs 
    carries a cofibration category structure 
    in which the cofibrations are those of \cite{CDKOSW} 
    and the weak equivalences are maps 
    inducing isomorphisms on bigraded path homology (\Cref{thm:cofib_cat}).
\end{itemize}
That this structure is indeed strictly finer than the one in \cite{CDKOSW} is demonstrated by complete computations of bigraded path homology and the MPSS for two important families of directed graphs. These are the \emph{directed cycles} $Z_m$ for $m\geq 1$, and the \emph{bi-directed cycles} $C_{m,n}$ for $m,n\geq 1$, depicted in~\Cref{fig:cycles}.
\begin{figure}%[h]
    \[
        \begin{tikzpicture}[baseline=0]
            \node(u) at (-135:1.5) {};
            \node(v) at (-90:1.5) {};
            \node(w) at (-45:1.5) {};
            \node(y) at (0:1.5) {};
            \node(z) at (225:1.5) {};
            \node(x) at (180:1.5) {$\bullet$};
            \node(a2) at (135:1.5) {$\bullet$};
            \node (a3) at (90:1.5) {$\bullet$};
            \node (a4) at (45:1.5) {$\bullet$};
    
            \draw[-stealth,dashed] (z) -- (x);
            \draw[-stealth] (x) -- (a2);
            \draw[-stealth] (a2) -- (a3);
            \draw[-stealth] (a3) -- (a4);
            \draw[-stealth,dashed] (a4) -- (y);
            \draw[-stealth,dashed] (y) -- (w);
            \draw[-stealth,dashed] (w) -- (v);
            \draw[-stealth,dashed] (v) -- (u);
    
            \node () at (0:0) {$Z_m$};
           \draw [
            decorate, 
            decoration = 
                {
                    brace,
                    raise=5,
                    amplitude=5pt,
                }
            ] 
                (1.6,1.6) --  (1.6,-1.6)
                node[pos=0.5,right=9pt]
                {$m$ edges};
        \end{tikzpicture}
        \qquad\qquad
        \begin{tikzpicture}[baseline=0]
            \node(x) at (180:1.5) {$\bullet$};
            \node(y) at (0:1.5) {$\bullet$};
    
            \node (a2) at (135:1.5) {$\bullet$};
            \node (a3) at (90:1.5) {};
            \node (a4) at (45:1.5) {$\bullet$};
    
            \node (b2) at (216:1.5) {$\bullet$};
            \node (b3) at (252:1.5) {};
            \node (b4) at (288:1.5) {};
            \node (b5) at (324:1.5) {$\bullet$};
            
            \draw[-stealth] (x) -- (a2);
            \draw[-stealth,dashed] (a2) -- (a3);
            \draw[-stealth,dashed] (a3) -- (a4);
            \draw[-stealth] (a4) -- (y);
    
            \draw[-stealth] (x) -- (b2);
            \draw[-stealth,dashed] (b2) -- (b3);
            \draw[-stealth,dashed] (b3) -- (b4);
            \draw[-stealth,dashed] (b4) -- (b5);
            \draw[-stealth] (b5) -- (y);
            
            \node () at (0:0) {$C_{m,n}$};
       \draw [
        decorate, 
        decoration = 
            {
                brace,
                raise=5,
                amplitude=5pt,
            }
        ] 
            (1.6,1.6) --  (1.6,0.1)
            node[pos=0.5,right=9pt]
            {$m$ edges};
       \draw [
        decorate, 
        decoration = 
            {
                brace,
                raise=5,
                amplitude=5pt,
            }
        ] 
            (1.6,-0.1) --  (1.6,-1.6)
            node[pos=0.5,right=9pt]
            {$n$ edges};
        \end{tikzpicture}
    \]
    \caption{The directed cycle $Z_m$ and the bi-directed cycle $C_{m,n}$}
    \label{fig:cycles}
\end{figure}
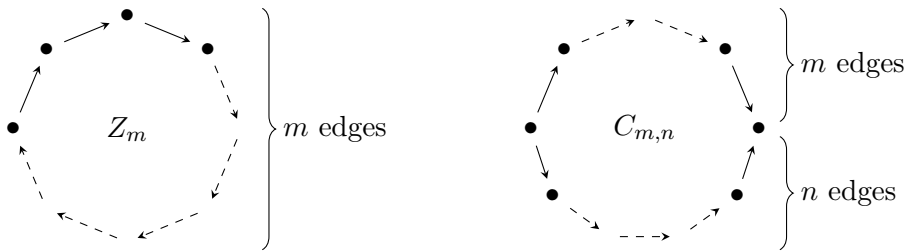

Path homology can only distinguish $Z_1$ and $Z_2$
(which it sees as `contractible') from the $Z_m$ for $m\geq 3$ (which it
sees as `circle-like').
Similarly, path homology cannot distinguish any of the $C_{m,n}$ from
one another so long as $\max(m,n)\geq 3$.
We find, however, that the MPSS, and even bigraded path homology,
can do much better:
\begin{itemize}
%OLD
	%\item[(E)]
%NEW
	\item[(D)]
	Bigraded path homology can distinguish all of the $Z_m$
	for $m\geq 2$ by inspecting the bidegrees in which generators lie.
	Further, the MPSS of $Z_m$ characterises $m\geq 1$ 
        as the first value of $r$  for which the $E^r$-page is trivial, i.e.~concentrated in bidegree $(0,0)$. 
        (\Cref{theorem-directed-cycles-omnibus}
        and the subsequent paragraph.)
%OLD 
	%\item[(F)]
%NEW
	\item[(E)]
	The bigraded path homology of $C_{m,n}$ depends only on the value of
        $\m=\max(m,n)$, and for $\m\geq 2$ one can determine the value of 
        $\m$ by inspecting the bidegrees in which generators lie.
	Further, the MPSS of $C_{m,n}$ characterises $\m\geq 2$ 
        as the first value of
	$r$ for which the $E^r$-page is trivial, i.e.~concentrated 
	in bidegree $(0,0)$.
        (\Cref{proposition-independent} 
        and~\Cref{theorem-mn-cycles-omnibus}.)
\end{itemize}
%OLD
%We believe that the results (A)--(F) demonstrate clearly the theoretical and computational strength of the MPSS, and of bigraded path homology in particular. Indeed, it appears that bigraded path homology shares many of the strengths and advantages of path homology, whilst being a more sensitive and informative invariant. On a technical front, our methods illustrate a useful principle: that properties of path homology are frequently (though not always) `inherited' from corresponding properties of magnitude homology---and that what holds true for either of these will often hold true throughout the MPSS. Indeed, proofs of results about path homology that use the standard construction  of \cite{GLMY2013,PH-homotopy}  are often rather involved, and we believe that passage to the bigraded theory and the MPSS can serve to clarify such proofs, rather than complicating them.
%NEW
We believe that the results (A)--(E) demonstrate clearly the theoretical and computational strength of the MPSS, and of bigraded path homology in particular. Indeed, it appears that bigraded path homology shares many of the strengths and advantages of path homology, whilst being a more sensitive and informative invariant. On a technical front, our methods illustrate a useful principle: that properties of path homology are frequently (though not always) `inherited' from corresponding properties of magnitude homology---and that what holds true for either of these will often hold true throughout the MPSS. Indeed, proofs of results about path homology that use the standard construction  of \cite{GLMY2013,PH-homotopy}  are often rather involved, and we believe that passage to the bigraded theory and the MPSS can serve to clarify such proofs, rather than complicating them.

Moreover, it is our belief that the MPSS as a whole will eventually cast more light on the homotopy theory of directed graphs. Elaborating on speculations made by Asao in the introduction to~\cite{Asao-filtered}, it is tempting to conjecture that the cofibration category we describe may belong to a nested family of structures, one for each page of the sequence, with \emph{$r$-homotopy} (\Cref{def:r-homotopy}) functioning as the relevant notion of homotopy for the theory in which weak equivalences induce isomorphisms on the $(r+1)$-page. Zooming in a little, consider the excision result mentioned above, which shows that  certain maps of pairs obtained from cofibrations induce isomorphisms on the MPSS from the $E^1$ page onwards. This is in contrast to the situation for homotopies, where $r$-homotopic maps induce equal maps on the MPSS from only the $(r+1)$-page onwards. We wonder whether there may be a notion of~\emph{$r$-cofibration}, more general than the existing notion of cofibration, which induces excision isomorphisms from the $E^{r+1}$ page onwards; such cofibrations might feature in a homotopy theory in which weak equivalences induce isomorphisms  on the $(r+1)$-page. The development of this idea is left to future work.

\subsection*{Open questions}
Magnitude homology, path homology and the MPSS
are all relatively new invariants, and as such there
are many open questions and opportunities for 
further research.  
Here, we highlight just a few that we anticipate will
be fruitful:
\begin{itemize}
    \item To what extent can entries of the MPSS be nonzero, 
    and to what extent can this be controlled?
    For example, given an arbitrary location $E^r_{i,j}$ in the MPSS, 
    can we find a graph $G$ for which $E^r_{i,j}(G)\neq 0$? 
    And can the diameter of such $G$ be chosen to be $r$?
    (The answer to such questions will often be `no',
    if only because the MPSS is concentrated in a specific octant,
    but we anticipate further restrictions still.
    Our computations for $Z_m$, for instance, provide many examples,
    but still only succeed in occupying bidegrees $(a,b)$ for which
    $\frac{a}{2}|b$ if $a$ is even, or $\frac{a-1}{2}|(b-1)$ if $a$ is odd.)
    
    \item To what extent can torsion appear in the MPSS, 
    and to what extent can it be controlled?  
    For example, 
    given a specific finitely-generated torsion abelian group,
    and a specific location $E^r_{i,j}$ in the MPSS,
    can we find a graph whose MPSS features that torsion 
    group in that location?
    (We do not know whether this question has been investigated in full
    even for magnitude homology.
    Again, the answer to this question will often be `no',
    but the nature and extent of any restrictions will 
    themselves be of interest.)
%NEW 
    \item To what extent do the results and methods of this paper 
    extend to the category $\NMet$ of generalised metric spaces
    and short maps with distances in $\mathbb{N}\cup\{\infty\}$?
    We anticipate that extending to this category will elucidate and
    facilitate any attacks on `realisation problems'
    like the two above.
%%OLD
%    \item To what extent do the results and methods of this paper 
%    extend to the category $\NMet$ of generalised metric spaces
%    and short maps with distances in $\mathbb{N}\cup\{\infty\}$?
%    (The usefulness of this category is made apparent
%    in our~\Cref{sec:finitary}.
%    We anticipate that extending to this category will elucidate and
%    facilitate any attacks on `realisation problems'
%    like the two above.)
    
    \item  Kaneta and Yoshinaga gave, under certain conditions,
    a decomposition of magnitude homology as a direct sum indexed
    by certain \emph{frames}~\cite[Theorem~3.12]{KanetaYoshinaga}.
    To what extent does this decomposition extend to the MPSS?
    In particular, how does it interact with the differentials $d^1$
    of the MPSS?

    \item Is there a Mayer--Vietoris theorem 
    applying to pages $E^r$ for $r\geq 3$?
    (We prove Mayer--Vietoris theorems for $E^1$ and $E^2$, which take the
    form of a short and long exact sequence respectively,
    the latter obtained from the former by passing to homology
    with respect to the $d^1$ differential.
    However, applying homology in a long exact sequence does not produce
    a long exact sequence, but rather a spectral sequence converging to $0$.
    A significant sub-question, then, is to identify the relevant 
    algebraic structures, generalising short and long exact sequences,
    that will relate the $E^r$-pages.)
    
    \item
    Are there versions of magnitude corresponding to the later pages
    of the MPSS of a graph $G$
    that can be described directly from the graph,
    without reference to homological algebra?
    (The notion of magnitude of a finite graph preceded the 
    introduction of magnitude homology,
    and has a simple and direct definition~\cite{LeinsterGraph}.)
\end{itemize}

\subsection*{Acknowledgements}
%NEW
We thank Yasuhiko Asao, Sergei O.~Ivanov, Maru Sarazola and Masahiko Yoshinaga for interesting and helpful conversations.
This work was partially supported by JSPS Postdoctoral Fellowships for Research in Japan.
%OLD
%We thank Yasuhiko Asao, Sergei O.~Ivanov, Maru Sarazola and Masahiko Yoshinaga for interesting and helpful conversations, and Jon Pridham for pointing out the 
%second example in~\Cref{rmk:not_MH_Met}. This work was partially supported by JSPS Postdoctoral Fellowships for Research in Japan.
%
%%%%%%%%%%%%%%%%%%%%%%%%%%%%%%%%%%%%%%%%%%%%%%%%%%%%%%

\section{The magnitude-path spectral sequence}\label{sec:MPS}

We begin by fixing terminology and notation concerning the categories of directed graphs and metric spaces, before proceeding to describe the magnitude-path spectral sequence. We will assume familiarity with spectral sequences in 
general, but for the reader who is new to the subject, 
we recommend Sections~5.1 and 5.2 of~\cite{Weibel},
and Section~2.1 of~\cite{McCleary}.

\subsection{The category of directed graphs}
\label{sec:digraph_met}

This paper is concerned with directed graphs, which are permitted to contain loops but not parallel edges. Thus, a directed graph \(G\) consists of a set \(V(G)\) of vertices and a set \(E(G) \subseteq V(G) \times V(G)\) of (directed) edges. We depict an edge \((x,y)\) by an arrow \(x \to y\). A \demph{(directed) path} from a vertex \(x\) to a vertex \(y\) in \(G\) is a sequence of consistently-oriented edges
\[x = x_0 \to x_1 \to \cdots \to x_k = y.\]
(Note that we do not require the vertices in a path to be distinct.) A \demph{map of directed graphs} \(G \to H\) is a function \(f\colon V(G) \to V(H)\) such that for every edge \(x \to y\) in \(G\), either \(f(x) = f(y)\) or there is an edge \(f(x) \to f(y)\) in \(H\) (or both). The category of directed graphs and maps of directed graphs is denoted by \(\cn{DiGraph}\).

We will also be interested in the category \(\cn{Met}\) of \demph{(generalized) metric spaces} and \demph{short maps}. A generalized metric space is a set \(X\) equipped with a function \(d_X\colon X \times X \to [0, +\infty]\) such that \(d_X(x,x)=0\) for every \(x \in X\) and the triangle inequality is satisfied. We will always call the function \(d_X\) a \demph{metric}, though it may not be separated or symmetric. A short map is a function \(f\colon X \to Y\) such that \(d_Y(f(x),f(x')) \leq d_X(x,x')\) for every \(x,x' \in X\).

Every directed graph \(G\) carries a metric on its vertex set, in which \(d(x,y)\) is the minimal number of edges in a directed path from \(x\) to \(y\), or \(+\infty\) if no such path exists. This is called the \demph{shortest path metric} on \(G\). The operation of equipping a directed graph with the shortest path metric extends to a functor
\[M\colon \cn{DiGraph} \hookrightarrow \cn{Met}\]
which is full and faithful, making \(\cn{DiGraph}\) into a full subcategory of \(\cn{Met}\).

\subsection{Reachability chains and the length filtration}

Throughout the paper we work over a fixed commutative ground ring $R$.
The \emph{reachability chain complex} of a directed graph $G$
is the chain complex $\RC_\ast(G)$ of $R$-modules 
defined as follows.
In degree $k$ the \(R\)-module $\RC_k(G)$ has basis
given by all tuples $(x_0,\ldots,x_k)$ of vertices of $G$ in which
consecutive entries are distinct, and in which there is a directed
path in $G$ from each entry to the next.
The differential of $\RC_\ast(G)$ is given by
\begin{equation}\label{eq-differential}
    \partial(x_0,\ldots,x_k)
    =
    \sum_{i=0}^k
    (-1)^i
    (x_0,\ldots,\widehat{x_i},\ldots,x_k)
\end{equation}
where any term with repeated consecutive entries is omitted.
Note that in the summand
$(x_0,\ldots,\widehat{x_i},\ldots,x_k)$ a path between
the adjacent terms $x_{i-1}$ and $x_{i+1}$ can be obtained by 
concatenating a path from $x_{i-1}$ to $x_i$ with one from
$x_i$ to $x_{i+1}$, these existing due to the original assumption
on $(x_0,\ldots,x_k)$.
The \emph{reachability homology} $\RH_\ast(G)$ of $G$
is simply the homology of $\RC_\ast(G)$,
and was studied in detail in~\cite{HepworthRoff2023}.

The \emph{length} of a tuple $(x_0,\ldots,x_k)$ of vertices in \(G\) is defined, using the shortest path metric, by
\[
    \ell(x_0,\ldots,x_k)
    =
    d(x_0,x_1)+\cdots + d(x_{k-1},x_k).
\]
The triangle inequality guarantees that 
$\ell(x_0,\ldots,\widehat{x_i},\ldots,x_k)$ is at 
most $\ell(x_0,\ldots,x_k)$.
This allows us to define a filtration 
\[
    F_0\RC_\ast(G)
    \subseteq
    F_1\RC_\ast(G)
    \subseteq
    F_2\RC_\ast(G)
    \subseteq
    \cdots
\]
of $\RC_\ast(G)$ 
by defining $F_\ell\RC_\ast(G)$ to be the subcomplex spanned
by the generators of length at most $\ell$.

\begin{remark}\label{rmk:nerve}
%NEW
    The reachability chains of a directed graph $G$ can also be described as the normalized simplicial chains of a certain simplicial set---a perspective that will be useful in \Cref{sec:E-Z}. We follow \cite[Section 1.2]{DIMZ} in calling this simplicial set the `(filtered) nerve'; it is defined as follows.
%OLD
%    The reachability chains of a directed graph $G$ can also be described as the normalized simplicial chains of a certain simplicial set---a perspective that will be useful in \Cref{sec:E-Z} and \Cref{sec:finitary}. We follow \cite[Section 1.2]{DIMZ} in calling this simplicial set the `(filtered) nerve'; it is defined as follows.

    The \demph{nerve} of \(G\) is the simplicial set \(\Nv(G)\) whose set of \(k\)-simplices is
    \[\Nv_k(G) = \left\{(x_0, \ldots, x_k) \middle\vert 
    \begin{array}{l}
    x_0 \ldots x_k \in V(G) \text{ and for } 0 \leq i < k \text{ there} \\
    \text{exists a directed path from } x_i \text{ to } x_{i+1} \text{ in } G
    \end{array}
    \right\}.\]
    (Observe that here we do not insist that adjacent terms be distinct.) For \(0 \leq i \leq k\) the face operator \(\delta_i\colon \Nv_k(G) \to \Nv_{k-1}(G)\) discards the \(i^\mathrm{th}\) vertex, and the degeneracy operator \(\sigma_i\colon \Nv_k(G) \to \Nv_{k+1}(G)\) duplicates the \(i^\mathrm{th}\) vertex. For each \(\ell \in \mathbb{N}\), there is a sub-simplicial set $F_\ell\Nv(G)$ of \(\Nv(G)\) whose set of \(k\)-simplices comprises all tuples in \(\Nv_k(G)\) of length at most $\ell$. 
    These make \(\Nv(G)\) into a filtered simplicial set called the \demph{filtered nerve} of \(G\). 
    
    Given a simplicial set $A$, we write $N(A)$ or simply $NA$ for the complex of normalized simplicial chains on $A$; if $A$ is filtered, we equip $NA$ with the filtration defined by $F_\ell NA = N(F_\ell A)$. The reachability complex of \(G\) is precisely the normalized complex of simplicial chains in the nerve of \(G\) \cite[Section 4]{HepworthRoff2023}---
    \[
    \RC_\ast(G) = N(\Nv(G))
    \]
    ---and with the filtration by length, this becomes an equality of filtered chain complexes.
\end{remark} 

To any filtered chain complex we can associate a spectral
sequence that, roughly speaking, allows us to understand the
homology of the original complex in terms of the homology
of its filtration quotients.
(For details, see Section~5.4 of~\cite{Weibel}, 
and Section~2.2 of~\cite{McCleary}.)
In the case of the reachability chain complex with its length
filtration, this spectral sequence
is called the \emph{magnitude-path spectral sequence},
or MPSS for short, and denoted by
$\{E^r_{\ast,\ast}(G),d^r\}_{r\geq 0}$.
Since $\RC_\ast(-)$ and its filtration are functorial with respect
to maps of graphs, each group $E^r_{i,j}(-)$ determines a functor
from directed graphs to $R$-modules.
The MPSS was first mentioned, for undirected graphs,
in the original paper on magnitude 
homology~\cite[Remark~8.7]{HepworthWillerton2017},
but it was Asao who first demonstrated its importance
in~\cite{Asao-path}.
For more details on aspects of what follows, see~\cite{Asao-path} and Section~1 of~\cite{DIMZ}.

\subsection{The $E^0$-page}\label{subsection-E0}
The $E^0$-page of the spectral sequence associated to a filtered chain complex
is given by the filtration quotients, and the differential $d^0$
is induced from the differential on the original complex.
In the case of the MPSS, this recovers the \emph{magnitude chains}
$\MC_{\ast,\ast}(G)$ of a directed graph $G$~\cite{HepworthWillerton2017}.
This is the graded chain complex spanned in
bidegree $k,\ell$ by the generators of $\RC_\ast(G)$ that have 
degree $k$ and length precisely $\ell$, and whose differential
is given by~\eqref{eq-differential}, but with
any terms of length less than $\ell$ omitted.
(Note in particular that the first and last
terms of the sum are always omitted.)
It is easy to see that 
$\MC_{\ast,\ell}(G)=F_\ell\RC_\ast(G)/F_{\ell-1}\RC_\ast(G)$,
so that taking degree shifts into account, we find that
\[
    E^0_{i,j}(G)
    = F_i\RC_{i+j}(G) / F_{i-1}\RC_{i+j}(G)
    =\MC_{i+j,i}(G)
\]
with the magnitude chains differential.

Certain bounds on $k$ and $\ell$ must be satisfied in order
for magnitude chain groups to be nonzero.  
It follows that  the $E^0$-page of the MPSS is concentrated 
in bidegrees $i,j$ for which $i\geq 0$ and $-i\leq j\leq 0$; see \Cref{fig:E0}.
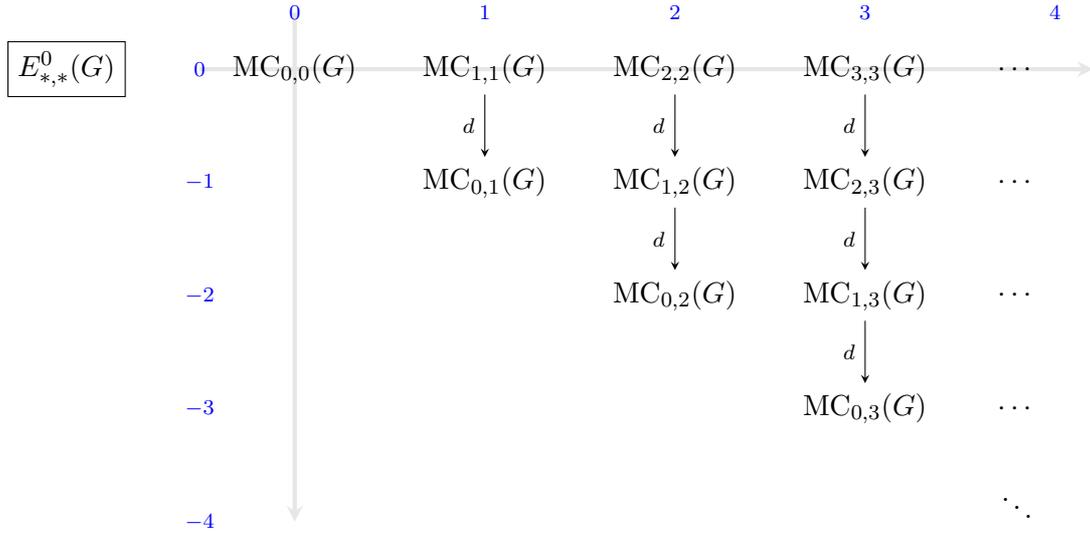
\begin{figure}[h!]
    \centering
    \begin{tikzpicture}[baseline=0,xscale=2.5,yscale=1.5]
        \node[draw]() at (-1.2,0) {$E^0_{\ast,\ast}(G)$};
        \draw[-stealth, ultra thick,white!90!black] (-0.5,0) to (4.2,0);
        \draw[-stealth, ultra thick,white!90!black] (0,0.5) to (0,-4);
        \node () at (0,0) {$\MC_{0,0}(G)$};
        \node (a) at (1,0) {$\MC_{1,1}(G)$};
        \node (b) at (1,-1) {$\MC_{0,1}(G)$};
        \node (c) at (2,0) {$\MC_{2,2}(G)$};
        \node (d) at (2,-1) {$\MC_{1,2}(G)$};
        \node (e) at (2,-2) {$\MC_{0,2}(G)$};
        \node (f) at (3,0) {$\MC_{3,3}(G)$};
        \node (g) at (3,-1) {$\MC_{2,3}(G)$};
        \node (h) at (3,-2) {$\MC_{1,3}(G)$};
        \node (i) at (3,-3) {$\MC_{0,3}(G)$};
        \node () at (3.8,-3.8) {$\ddots$};
        \node () at (3.8,0) {$\cdots$};
        \node () at (3.8,-1) {$\cdots$};
        \node () at (3.8,-2) {$\cdots$};
        \node () at (3.8,-3) {$\cdots$};
        \node[blue]() at (0,0.5) {$\scriptstyle 0$};
        \node[blue]() at (1,0.5) {$\scriptstyle 1$};
        \node[blue]() at (2,0.5) {$\scriptstyle 2$};
        \node[blue]() at (3,0.5) {$\scriptstyle 3$};
        \node[blue]() at (4,0.5) {$\scriptstyle 4$};
        \node[blue]() at (-0.5,0) {$\scriptstyle 0$};
        \node[blue]() at (-0.5,-1) {$\scriptstyle -1$};
        \node[blue]() at (-0.5,-2) {$\scriptstyle -2$};
        \node[blue]() at (-0.5,-3) {$\scriptstyle -3$};
        \node[blue]() at (-0.5,-4) {$\scriptstyle -4$};
        \draw[-stealth] (a) to node[left]{$\scriptstyle d$}(b);
        \draw[-stealth] (c) to node[left]{$\scriptstyle d$}(d);
        \draw[-stealth] (d) to node[left]{$\scriptstyle d$}(e);
        \draw[-stealth] (f) to node[left]{$\scriptstyle d$}(g);
        \draw[-stealth] (g) to node[left]{$\scriptstyle d$}(h);
        \draw[-stealth] (h) to node[left]{$\scriptstyle d$}(i);
    \end{tikzpicture}
    \caption{Page \(E^0\) of the MPSS is the magnitude chain complex.}
    \label{fig:E0}
\end{figure}
If $G$ has an upper bound on its finite lengths,
i.e.~if there is $K$ such that $d(x,y)\leq K$ or $d(x,y)=\infty$
for all vertices $x,y$,
then $j$ is constrained to lie in the smaller range
$-\frac{K-1}{K}\cdot i\leq j\leq 0$.
And the terms $\MPSS^0_{i,-i}=\MC_{0,i}(G)$ on the negative diagonal all
vanish for $i>0$.
(See~\cite[Proposition~2.10]{HepworthWillerton2017}
and  \cite[Theorem~4.1]{LeinsterShulman}.)

Observe that we are now working with two different 
kinds of bigrading, sometimes on the same object.
First, the bigrading of $\MC_{k,\ell}(G)$, which is well established
in the literature on magnitude homology, and which exactly picks out
the homological degree and length.
Second, the bigrading of $E^r_{i,j}(G)$, which has been established
in the literature on spectral sequences of this type since their
classical times.
Although it is awkward to use two different bigradings,
it seems that we are stuck with both.
We will always use notation to distinguish which bigrading system
is at play, writing $E^1_{i,j}(G)$ on the one hand and 
$\MC_{k,\ell}(G)$ on the other.

\subsection{The $E^1$-page}

The $E^1$-page of the spectral sequence associated to a filtration
is precisely the homology of the associated filtration quotients.
In the case of the MPSS, this recovers the \emph{magnitude homology}
$\MH_{\ast,\ast}(G)$ of the graph $G$,
defined to be precisely the homology of the magnitude chains,
$\MH_{k,\ell}(G) = H_k(\MC_{\ast,\ell}(G))$.
Thus, taking the degree shifts into account, we have
\[
    E^1_{i,j}(G)
    = 
    \MH_{i+j,i}(G),
\]
which we depict as in \Cref{fig:E1}.
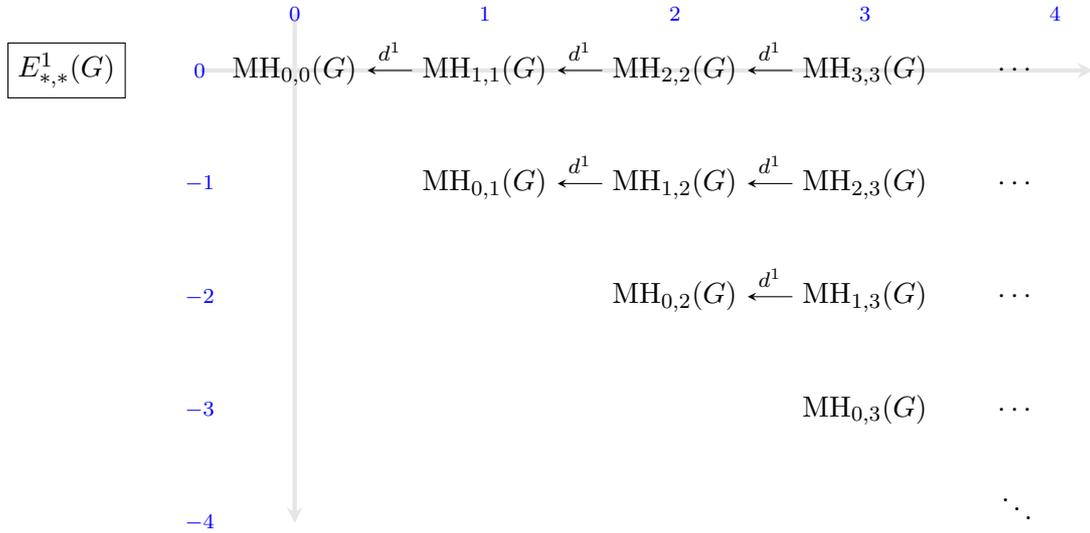
\begin{figure}[h!]
    \centering
    \begin{tikzpicture}[baseline=0,xscale=2.5,yscale=1.5]
        \node[draw]() at (-1.2,0) {$E^1_{\ast,\ast}(G)$};
        \draw[-stealth, ultra thick,white!90!black] (-0.5,0) to (4.2,0);
        \draw[-stealth, ultra thick,white!90!black] (0,0.5) to (0,-4);
        \node (aa) at (0,0) {$\MH_{0,0}(G)$};
        \node (a) at (1,0) {$\MH_{1,1}(G)$};
        \node (b) at (1,-1) {$\MH_{0,1}(G)$};
        \node (c) at (2,0) {$\MH_{2,2}(G)$};
        \node (d) at (2,-1) {$\MH_{1,2}(G)$};
        \node (e) at (2,-2) {$\MH_{0,2}(G)$};
        \node (f) at (3,0) {$\MH_{3,3}(G)$};
        \node (g) at (3,-1) {$\MH_{2,3}(G)$};
        \node (h) at (3,-2) {$\MH_{1,3}(G)$};
        \node (i) at (3,-3) {$\MH_{0,3}(G)$};
        \node () at (3.8,-3.8) {$\ddots$};
        \node () at (3.8,0) {$\cdots$};
        \node () at (3.8,-1) {$\cdots$};
        \node () at (3.8,-2) {$\cdots$};
        \node () at (3.8,-3) {$\cdots$};
        \node[blue]() at (0,0.5) {$\scriptstyle 0$};
        \node[blue]() at (1,0.5) {$\scriptstyle 1$};
        \node[blue]() at (2,0.5) {$\scriptstyle 2$};
        \node[blue]() at (3,0.5) {$\scriptstyle 3$};
        \node[blue]() at (4,0.5) {$\scriptstyle 4$};
        \node[blue]() at (-0.5,0) {$\scriptstyle 0$};
        \node[blue]() at (-0.5,-1) {$\scriptstyle -1$};
        \node[blue]() at (-0.5,-2) {$\scriptstyle -2$};
        \node[blue]() at (-0.5,-3) {$\scriptstyle -3$};
        \node[blue]() at (-0.5,-4) {$\scriptstyle -4$};
        \draw[-stealth] (f) to node[above]{$\scriptstyle d^1$}(c);
        \draw[-stealth] (c) to node[above]{$\scriptstyle d^1$}(a);
        \draw[-stealth] (a) to node[above]{$\scriptstyle d^1$}(aa);
        \draw[-stealth] (g) to node[above]{$\scriptstyle d^1$}(d);
        \draw[-stealth] (d) to node[above]{$\scriptstyle d^1$}(b);
        \draw[-stealth] (h) to node[above]{$\scriptstyle d^1$}(e);
    \end{tikzpicture}
    \caption{Page \(E^1\) of the MPSS is magnitude homology.}
    \label{fig:E1}
\end{figure}

The differential $d^1$ in the $E^1$ term of the spectral sequence
associated to a filtered chain complex $C_\ast$
is given by applying the differential of $C_\ast$
to appropriate representatives~\cite[5.4.6]{Weibel}.
In the case of the MPSS, this amounts to the following.
\begin{itemize}
    \item
    Take an element $x\in\MH_{i+j,i}(G)$.
    \item
    Represent $x$ by a cycle in $\MC_{i+j,i}(G)$.
    \item
    Regard the cycle as an element of $\RC_{i+j}(G)$.
    It is a combination of generators of length $i$.
    \item
    Apply the differential of $\RC_{i+j}(G)$
    to obtain an element of $\RC_{i+j-1}(G)$.
    This is a combination of generators of length at most $i-1$.
    \item
    Discard all generators of length $i-2$ or less 
    and regard the result as an element of $\MC_{i+j-1,i-1}(G)$.
    This element is a cycle.
    \item
    Then $d^1(x)$ is the associated homology class in
    $\MH_{i+j-1,i-1}(G)$.
\end{itemize}

\subsection{The $E^2$-page and beyond}\label{subsection:E2}
The $E^2$-page of the MPSS does not admit so direct a
description as the preceding pages.
Nevertheless, Asao showed  that it contains an important
invariant, namely the path homology, as its horizontal axis:
\[
    E^2_{i,0}(G) = \PH_i(G)
\]
See~\cite[Theorem~1.2]{Asao-path}. Though it is standard in some parts of the literature to denote path homology simply by \(H_\ast\), we write \(\PH_\ast\) to distinguish it more clearly from the other homology theories at play.

Path homology has an important homotopy-invariance property~\cite[Theorem~3.3]{PH-homotopy}. 
Asao showed that this extends to the rest of the $E^2$-page
and, with increasing strength, to the subsequent pages,
as we now explain.

\begin{definition}\label{def:r-homotopy}
    Let \(f,g\colon G \to H\) be maps of directed graphs, and let \(r \geq 0\). We say \demph{there is an \(r\)-homotopy from \(f\) to \(g\)}, and write \(f \rightsquigarrow_r g\), if every vertex $x$ of $G$ satisfies \(d(f(x),g(x)) \leq r\). We say \demph{there is a long homotopy from \(f\) to \(g\)}, and write \(f \rightsquigarrow_\infty g\), if every vertex $x$ of $G$ satisfies \(d(f(x),g(x)) < \infty\).
\end{definition}

Thus, for example, there is a \(1\)-homotopy from $f$ to $g$ if, for each $x$, either $f(x)$ and $g(x)$ are equal or there is a directed edge from the former to the latter. In general, \(r\)-homotopy (or long homotopy) is a condition on the pair of maps \(f\) and \(g\) which requires the existence of certain paths in \(H\), but does not demand us to make a particular choice of such paths. The relation \(\rightsquigarrow_r\) is not symmetric; nor, when \(r \neq 0, \infty\), is it transitive. However, Asao proved the following, which we state explicitly here
for future reference.

\begin{proposition}[{Asao~\cite[Theorem~1.3]{Asao-filtered}}]
\label{r-homotopy-invariance}
    If there is an \(r\)-homotopy from $f$ to $g$, then the induced maps 
    $E^s(f),E^s(g)\colon E^s_{\ast,\ast}(G)\to E^s_{\ast,\ast}(H)$ 
    are equal for $s\geqslant r+1$.
\end{proposition}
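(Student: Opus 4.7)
The plan is to lift the $r$-homotopy from $f$ to $g$ to an explicit chain homotopy $h \colon \RC_*(G) \to \RC_{*+1}(H)$ satisfying $\partial h + h\partial = f_* - g_*$ that increases the length filtration by at most $r$, and then to invoke the standard fact relating filtered chain homotopies to their induced spectral sequences.

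First I would construct $h$ by the classical prism formula: for a generator $(x_0,\ldots,x_k) \in \RC_k(G)$, set
\[
    h(x_0,\ldots,x_k) = \sum_{i=0}^k (-1)^i \bigl(f(x_0), \ldots, f(x_i), g(x_i), \ldots, g(x_k)\bigr),
\]
with the convention that tuples having two consecutive equal entries are zero in the normalised complex. Each summand is a valid generator of $\RC_{k+1}(H)$: directed paths between consecutive terms of the form $(f(x_{j-1}), f(x_j))$ or $(g(x_{j-1}), g(x_j))$ exist because $f$ and $g$ are maps of directed graphs, while the $r$-homotopy hypothesis supplies a path of length at most $r$ from $f(x_i)$ to $g(x_i)$. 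Viewed through the nerve description of \Cref{rmk:nerve}, this is exactly the chain homotopy induced by the simplicial homotopy $\Nv(G) \times \Delta^1 \to \Nv(H)$ sending $(\sigma, i)$ to the $i$-th prism face of $\sigma$, so the identity $\partial h + h\partial = f_* - g_*$ follows from the standard simplicial computation, compatible with passage to normalised chains.

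Since $f$ and $g$ descend to short maps for the shortest-path metric, $d(f(x_{j-1}), f(x_j)) \leq d(x_{j-1}, x_j)$ and similarly for $g$; combined with $d(f(x_i), g(x_i)) \leq r$, this yields
\[
    \ell\bigl(f(x_0), \ldots, f(x_i), g(x_i), \ldots, g(x_k)\bigr) \leq \ell(x_0,\ldots,x_k) + r,
\]
so $h(F_\ell \RC_*(G)) \subseteq F_{\ell+r}\RC_{*+1}(H)$. It is then standard (see Section~5.4 of \cite{Weibel}) that a chain homotopy $h$ between filtered chain maps satisfying $h(F_p) \subseteq F_{p+r}$ forces equality of the induced maps on $E^{r+1}$: a representative $z \in F_p$ of a class in $E^{r+1}_{p,q}$ has $\partial z \in F_{p-r-1}$, and in the equation $(f-g)(z) = \partial h(z) + h(\partial z)$ one has $h(z) \in F_{p+r}$ and $h(\partial z) \in F_{p-1}$; both terms vanish in $E^{r+1}_{p,q}$. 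Equality then propagates to all $E^s$ with $s \geq r+1$, since each later page is the homology of its predecessor.

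The main obstacle is the bookkeeping around the \emph{normalised} reachability complex: the prism formula can produce tuples with consecutive equal entries, which must be set to zero, and one must confirm that the chain homotopy identity survives this normalisation. The simplicial viewpoint of \Cref{rmk:nerve} makes this automatic, because any simplicial homotopy between maps of simplicial sets descends canonically to a chain homotopy on normalised chains.
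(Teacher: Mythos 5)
The paper does not actually prove this proposition --- it is quoted from Asao~\cite[Theorem~1.3]{Asao-filtered} and used as a black box --- so there is no internal proof to compare against. Your argument is correct and is essentially the expected (and, I believe, Asao's own) one: the $r$-homotopy yields a simplicial homotopy $\Nv(G)\times\Delta^1\to\Nv(H)$ whose prism operator is a chain homotopy raising the length filtration by at most $r$, and the standard filtration-shift lemma then gives $E^{r+1}(f)=E^{r+1}(g)$, whence equality on all later pages since each is the homology of its predecessor. All the key checks are in place: each prism summand is a legitimate generator of $\RC_{k+1}(H)$ because $f$ and $g$ are graph maps and $d(f(x_i),g(x_i))\le r<\infty$, the filtration estimate uses that graph maps are short for the path metric, and in the $E^{r+1}$ step the term $\partial h(z)$ lies in $\partial Z^r_{p+r}$ while $h(\partial z)$ lies in $Z^r_{p-1}$, both of which are killed in $E^{r+1}_{p}$ --- the one point your write-up states only as ``both terms vanish,'' but which does go through.
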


In particular, this says that the entirety of the $E^2$-term of the MPSS is invariant under $1$-homotopy. (See also~\cite[Proposition~5.7]{Asao-path} for this statement.) Meanwhile, reachability homology is invariant under long homotopy:

%NEW
\begin{proposition}[{\cite[Theorem 4.4]{HepworthRoff2023}}]
%OLD
%\begin{proposition}[{\cite[Theorem 4.6]{HepworthRoff2023}}]
    If there is a long homotopy from \(f\) to \(g\), then the induced maps \(\RH_\ast(f), \RH_\ast(g)\colon \RH_\ast(G) \to \RH_\ast(H)\) are equal.
\end{proposition}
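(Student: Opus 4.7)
The plan is to lift the claimed equality from the level of reachability homology to a chain homotopy between the induced maps on $\RC_\ast$, using the nerve description recalled in~\Cref{rmk:nerve}. Given a long homotopy $f \rightsquigarrow_\infty g$, I would construct a simplicial homotopy $\Nv(f) \simeq \Nv(g)$ between the induced maps of simplicial sets $\Nv(G) \to \Nv(H)$, and then take normalized chains.

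Concretely, for each $k \geq 0$ and each $i \in \{0, \ldots, k\}$, I would define a map $h_i \colon \Nv_k(G) \to \Nv_{k+1}(H)$ by the standard prism formula
\[
    h_i(x_0, \ldots, x_k) = \bigl(f(x_0), \ldots, f(x_i), g(x_i), g(x_{i+1}), \ldots, g(x_k)\bigr).
\]
The first step is to verify that the right-hand side really is a simplex of $\Nv(H)$, i.e.\ that there is a directed path in $H$ between every pair of adjacent entries. For the $f$-segment and the $g$-segment this follows from the fact that digraph maps send directed paths to directed paths (permitting constant segments at identified vertices), so that $\Nv(f)$ and $\Nv(g)$ are well-defined simplicial maps to begin with. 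For the middle transition from $f(x_i)$ to $g(x_i)$ the required path is supplied precisely by the long homotopy hypothesis $d(f(x_i), g(x_i)) < \infty$. Next I would check that the family $\{h_i\}$ satisfies the standard simplicial homotopy identities relating them to the face and degeneracy operators of $\Nv(G)$ and $\Nv(H)$, with boundary behaviour $\delta_0 h_0 = \Nv(f)$ and $\delta_{k+1} h_k = \Nv(g)$ (or vice versa, depending on orientation convention); this is a purely formal verification.

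Passing to normalized chains, the alternating sum
\[
    s_k = \sum_{i=0}^{k} (-1)^i h_i \colon \RC_k(G) \longrightarrow \RC_{k+1}(H)
\]
then defines a chain homotopy $\partial s + s \partial = \RC_\ast(g) - \RC_\ast(f)$, so the induced maps on $\RH_\ast$ coincide.

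The main obstacle, though essentially bookkeeping, is handling the interaction with the normalization: the formula for $h_i$ can produce tuples in which adjacent entries repeat, for instance when $f(x_i) = g(x_i)$, or across the $f$-$g$ boundary. These are degenerate simplices in $\Nv(H)$ and so vanish in $\RC_\ast(H)$, which means one must check that the simplicial homotopy identities still hold after normalization. This is standard (the Dold--Kan/Eilenberg--Zilber machinery guarantees that simplicial homotopies descend to normalized chain homotopies), but it is the one place where care is needed. Everything else reduces to the two facts used in the well-definedness step: digraph maps preserve reachability, and long homotopy supplies the missing paths at the pivot vertex.
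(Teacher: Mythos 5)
The paper does not prove this statement itself---it is quoted directly from \cite[Theorem 4.6]{HepworthRoff2023}---and your argument is correct and essentially the proof given there: the long homotopy supplies exactly the path $f(x_i)\to g(x_i)$ needed to make the prism operators $h_i$ land in $\Nv(H)$ (equivalently, it is a natural transformation between the induced maps of reachability preorders, whose nerve is this simplicial homotopy), and the resulting simplicial homotopy descends to a chain homotopy on normalized chains by the standard machinery. No gaps.
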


The notion of \(r\)-homotopy and long homotopy give rise immediately to corresponding notions of \(r\)-homotopy and long homotopy \emph{equivalence} for pairs of directed graphs. In particular, one can define \(r\)-contractibility for any \(r \geq 0\). For this, note first that the terminal object in the category \(\DiGraph\) is the directed graph with a unique vertex; we denote it by \(\bullet\). Its MPSS is trivial: for every \(r\), the page \(E^r(\bullet)\) is concentrated in bidegree \((0,0)\), where it is given by a single copy of the ground ring \(R\).

\begin{definition}
    Directed graphs \(G\) and \(H\) are said to be \demph{\(r\)-homotopy equivalent} (respectively, \demph{long homotopy equivalent}) if there exist maps \(f\colon G \rightleftarrows H : g\) such that \(g \circ f\) is related to the identity on \(G\) by a zig-zag of \(r\)-homotopies (resp.~long homotopies) and \(f \circ g\) is related to the identity on \(H\) by a zig-zag of \(r\)-homotopies (resp.~long homotopies). A directed graph \(G\) is said to be \demph{\(r\)-contractible} if the terminal map \(G \to \bullet\) is part of an \(r\)-homotopy equivalence.
\end{definition}

\begin{corollary}
    If \(G\) and \(H\) are \(r\)-homotopy equivalent, then \(E^s(G) \cong E^s(H)\) for all \(s \geq r + 1\). \qed
\end{corollary}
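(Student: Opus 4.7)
The statement is a routine corollary of \Cref{r-homotopy-invariance} together with functoriality of each page $E^s$, so the plan is simply to trace through the definition of $r$-homotopy equivalence carefully, paying attention to the fact that the relation $\rightsquigarrow_r$ is not symmetric and so must be handled along a zig-zag.

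First I would unpack the hypothesis: the maps $f\colon G\to H$ and $g\colon H\to G$ come equipped with finite zig-zags of $r$-homotopies connecting $g\circ f$ to $\mathrm{id}_G$ and $f\circ g$ to $\mathrm{id}_H$. Concretely, there are sequences of maps $h_0,h_1,\ldots,h_n\colon G\to G$ with $h_0 = \mathrm{id}_G$, $h_n = g\circ f$, and for each $i$ either $h_i\rightsquigarrow_r h_{i+1}$ or $h_{i+1}\rightsquigarrow_r h_i$, and similarly on the $H$ side.

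Next, for each consecutive pair $(h_i,h_{i+1})$ I would apply \Cref{r-homotopy-invariance}. Regardless of the direction of the $r$-homotopy, the proposition gives $E^s(h_i) = E^s(h_{i+1})$ for every $s\geq r+1$. Chaining these equalities through the zig-zag yields $E^s(\mathrm{id}_G) = E^s(g\circ f)$, and by functoriality of $E^s$ this reads $\mathrm{id}_{E^s(G)} = E^s(g)\circ E^s(f)$. The analogous argument on the other side gives $\mathrm{id}_{E^s(H)} = E^s(f)\circ E^s(g)$.

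Finally I would conclude that $E^s(f)$ and $E^s(g)$ are mutually inverse isomorphisms for every $s\geq r+1$, yielding the desired $E^s(G)\cong E^s(H)$. There is no genuine obstacle here; the only point requiring a bare moment of care is the asymmetry of $\rightsquigarrow_r$, which is harmless because \Cref{r-homotopy-invariance} produces an \emph{equality} of induced maps, which is symmetric even though the underlying relation is not.
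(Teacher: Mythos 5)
Your argument is correct and is exactly the routine deduction the paper intends (the corollary is stated with no written proof): apply Proposition~\ref{r-homotopy-invariance} link by link along each zig-zag to get $E^s(g\circ f)=E^s(\mathrm{id}_G)$ and $E^s(f\circ g)=E^s(\mathrm{id}_H)$ for $s\geq r+1$, then conclude by functoriality. Your remark that the asymmetry of $\rightsquigarrow_r$ is harmless because the proposition yields an equality of induced maps is the right observation.
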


For instance, if \(G\) has diameter \(r\) (meaning that \(r = \sup_{g,g' \in V(G)} d(g,g')\))
then \(G\) is \(r\)-contractible; it follows that its magnitude-path spectral sequence is trivial from page \(E^{r+1}\) onwards. For recent progress in the study of \(r\)-homotopy equivalence for directed graphs and metric spaces, we refer the reader to Ivanov \cite{SOIvanov}.

\subsection{The $E^\infty$-page}
Let us now consider the $E^\infty$-page of the MPSS.
The target of the MPSS is the homology of the reachability
chains from which it was constructed,
i.e.~the reachability homology $\RH_\ast(G)$.
In order to guarantee convergence,
let us assume that the graph $G$ has an
upper bound on its finite distances,
i.e.~that there is $K$ such that for each pair of vertices $x,y$
either $d(x,y)\leq K$ or $d(x,y)=\infty$.
Then the filtration of $\RC_\ast(G)$ is bounded:
in each degree $n$, $F_p\RC_n(G)$ vanishes for $p\leq n-1$,
and coincides with $\RC_n(G)$ for $p\geq n\cdot K$.
(See page~123 of~\cite{Weibel}.)
It follows that  in each bidegree $i,j$ 
the terms $E^r_{i,j}(G)$ are eventually independent of $r$,
and that their common value $E^\infty_{i,j}(G)$ is isomorphic
to the relevant filtration quotient of $\RH_\ast(G)$
in the filtration it inherits from the length filtration:
\[
    E^\infty_{i,j}(G)
    =
    F_i\RH_{i+j}(G)/F_{i-1}\RH_{i+j}(G).
\] 
Note that the above condition on $G$ always holds for
finite graphs.
However, it seems possible that 
other conditions on $G$ may guarantee that the filtration
is bounded, and certainly other conditions besides boundedness can
guarantee convergence of a spectral sequence.

\section{Bigraded path homology}\label{sec:BPH}

We saw in the last section that the $E^2$-page of the MPSS
of a directed graph $G$ contains the path homology $\PH_\ast(G)$
as its horizontal axis, and that the entire page has the same 
homotopy invariance property that $\PH_\ast(G)$ does,
namely invariance under $1$-homotopy.
This motivates the following definition.

\begin{definition}
    Let $G$ be a directed graph.
    The \emph{bigraded path homology} of $G$,
    denoted $\PH_{\ast,\ast}(G)$, is defined by
    \begin{equation}\label{eq-PH-MPSS}
        \PH_{k,\ell}(G) =  E^2_{\ell,k-\ell}(G)
    \end{equation}
    for all $k,\ell$,
    so that we have
    \[
        E^2_{i,j}(G)
        =
        \PH_{i+j,i}(G)
    \]
    in precise analogy with the relationship between $E^1$ 
    and $\MH$, and also
    \[
        \PH_{k,k}(G) = \PH_k(G).
    \]
    We may depict the former as in \Cref{fig:E2}. Just as the MPSS is functorial with respect to
    maps of graphs, so the same holds for the
    bigraded path homology.
\begin{figure}[h!]
    \centering
    \begin{tikzpicture}[baseline=0,xscale=2.5,yscale=1.5]
        \node[draw]() at (-1.2,0) {$E^2_{\ast,\ast}(G)$};
        \draw[-stealth, ultra thick,white!90!black] (-0.5,0) to (4.2,0);
        \draw[-stealth, ultra thick,white!90!black] (0,0.5) to (0,-4);
        \node (aa) at (0,0) {$\PH_{0,0}(G)$};
        \node (a) at (1,0) {$\PH_{1,1}(G)$};
        \node (b) at (1,-1) {$\PH_{0,1}(G)$};
        \node (c) at (2,0) {$\PH_{2,2}(G)$};
        \node (d) at (2,-1) {$\PH_{1,2}(G)$};
        \node (e) at (2,-2) {$\PH_{0,2}(G)$};
        \node (f) at (3,0) {$\PH_{3,3}(G)$};
        \node (g) at (3,-1) {$\PH_{2,3}(G)$};
        \node (h) at (3,-2) {$\PH_{1,3}(G)$};
        \node (i) at (3,-3) {$\PH_{0,3}(G)$};
        \node () at (3.8,-3.8) {$\ddots$};
        \node () at (3.8,0) {$\cdots$};
        \node () at (3.8,-1) {$\cdots$};
        \node () at (3.8,-2) {$\cdots$};
        \node () at (3.8,-3) {$\cdots$};
        \node[blue]() at (0,0.5) {$\scriptstyle 0$};
        \node[blue]() at (1,0.5) {$\scriptstyle 1$};
        \node[blue]() at (2,0.5) {$\scriptstyle 2$};
        \node[blue]() at (3,0.5) {$\scriptstyle 3$};
        \node[blue]() at (4,0.5) {$\scriptstyle 4$};
        \node[blue]() at (-0.5,0) {$\scriptstyle 0$};
        \node[blue]() at (-0.5,-1) {$\scriptstyle -1$};
        \node[blue]() at (-0.5,-2) {$\scriptstyle -2$};
        \node[blue]() at (-0.5,-3) {$\scriptstyle -3$};
        \node[blue]() at (-0.5,-4) {$\scriptstyle -4$};
    \end{tikzpicture}
    \caption{Page \(E^2\) of the MPSS is bigraded path homology.}
    \label{fig:E2}
\end{figure}
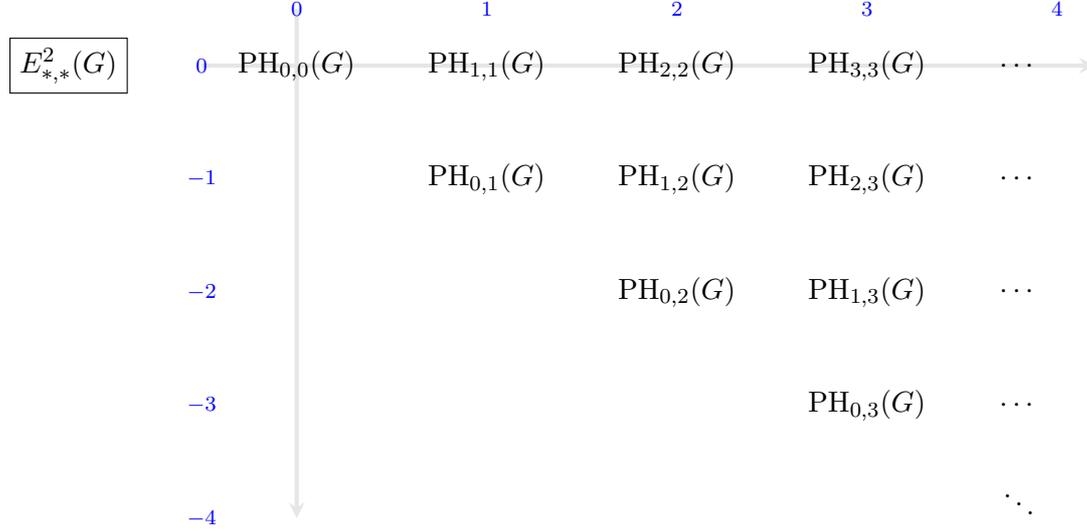
\end{definition}

We will see later in the paper that the bigraded path homology
groups satisfy many of the same formal properties as 
path homology, but that they contain strictly more information.

\begin{definition}
\label{def:relative_MPSS}
    Let $X$ be a directed graph and let $A$ be a subgraph of $X$.
    Then $\RC(A)$ is a subcomplex of $\RC(X)$,
    and we define the \emph{relative reachability chains}
    of the pair $(X,A)$ to be the quotient chain complex
    \[
        \RC(X,A) = \RC(X)/\RC(A).
    \]
    We equip this with the filtration inherited from the
    filtration on $\RC(X)$, so that
    $F_\ell\RC(X,A)$ is the image of $F_\ell\RC(X)$
    in $\RC(X,A)$.
    This results in the 
    \emph{relative magnitude path-spectral sequence}
    $\{E^r(X,A),d^r\}_{r\geq 0}$
    and associated \emph{magnitude chains},
    \emph{magnitude homology} and
    \emph{bigraded path homology} groups of the pair,
    defined by
    \begin{align*}
        \MC_{k,\ell}(X,A) &=  E^0_{\ell,k-\ell}(X,A)
        \\
        \MH_{k,\ell}(X,A) &=  E^1_{\ell,k-\ell}(X,A)
        \\
        \PH_{k,\ell}(X,A) &=  E^2_{\ell,k-\ell}(X,A)
    \end{align*}
    for all $k,\ell$.
\end{definition}

Recall that a subgraph \(A\) of \(X\) is said to be \demph{convex} if for every pair of vertices \(a,a'\) in \(A\) we have \(d_A(a,a') = d_X(a,a')\) \cite[Definition 4.2]{LeinsterGraph}.

\begin{theorem}[Exact sequences of a pair]
\label{thm:les-pair}
    Let $X$ be a graph and let $A$ be a subgraph of $X$.
    If $A$ is convex in $X$, then there is 
    a short exact sequence of magnitude chains: 
    \begin{equation}\label{eq-ses-mc-pair}
        0
        \to
        \MC(A)
        \to
        \MC(X)
        \to
        \MC(X,A)
        \to
        0
    \end{equation}
    Consequently there is a long exact sequence
    of magnitude homology groups:
    \begin{equation}\label{eq-les-mh-pair}
        \cdots
        \to
        \MH_{\ast,\ast}(A)
        \to
        \MH_{\ast,\ast}(X)
        \to
        \MH_{\ast,\ast}(X,A)
        \to
        \MH_{\ast-1,\ast}(X,A)
        \to
        \cdots
    \end{equation}
    If, in addition, there are no edges from $X\setminus A$ into $A$,
    then~\eqref{eq-ses-mc-pair} is split by a chain map,
    \eqref{eq-les-mh-pair} splits into short exact sequences,
    and we obtain a long exact sequence
    of bigraded path homology groups:
    \begin{equation}\label{eq-les-ph-pair}
        \cdots
        \to
        \PH_{\ast,\ast}(A)
        \to
        \PH_{\ast,\ast}(X)
        \to
        \PH_{\ast,\ast}(X,A)
        \to
        \PH_{\ast-1,\ast-1}(X,A)
        \to
        \cdots
    \end{equation}  
\end{theorem}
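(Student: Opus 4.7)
The plan is to exhibit \eqref{eq-ses-mc-pair}, \eqref{eq-les-mh-pair} and \eqref{eq-les-ph-pair} by working with the tautological short exact sequence of filtered chain complexes $0 \to \RC(A) \to \RC(X) \to \RC(X,A) \to 0$ and passing through successive pages of the MPSS. The first task is to observe that the role of the convexity hypothesis is to align the length filtration on $\RC(A)$, viewed as a subcomplex of $\RC(X)$, with its intrinsic length filtration: since $d_A \geq d_X$ always holds on $A$, the equality forced by convexity gives $F_\ell \RC(X) \cap \RC(A) = F_\ell \RC(A)$. Consequently $0 \to F_\ell \RC(A) \to F_\ell \RC(X) \to F_\ell \RC(X,A) \to 0$ is exact for every $\ell$, and taking the filtration quotients $F_\ell/F_{\ell-1}$ produces \eqref{eq-ses-mc-pair}. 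Applying the snake lemma to $\MC_{\ast,\ell}(-)$ for each fixed length $\ell$ then yields \eqref{eq-les-mh-pair}.

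Now assume in addition that there are no edges from $X\setminus A$ into $A$. I would first prove the combinatorial key lemma: in any reachability tuple $(x_0, \ldots, x_k)$, the non-$A$ vertices form a terminal segment. Indeed, if $x_j \notin A$ and $j < j'$, then $x_{j'}$ is reachable from $x_j$ by a directed path in $X$, but by the hypothesis no such path can cross from $X\setminus A$ into $A$, so $x_{j'} \notin A$. In particular, if any vertex of $\sigma = (x_0, \ldots, x_k)$ lies outside $A$, then $x_k \notin A$. Next I would define a section $s\colon \MC(X,A) \to \MC(X)$ by sending the class of a basis generator $\sigma$ (necessarily not entirely in $A$) to $\sigma$ itself, and verify that $s$ is a chain map. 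Recall that the magnitude differential omits the first and last positions and keeps only length-preserving middle-vertex deletions at indices $0 < i < k$. Every surviving term retains $x_k \notin A$, and so does not lie in $\MC(A)$. Hence $\partial^{\MC}(s\sigma) \in s(\MC(X,A))$, yielding a chain-level splitting of \eqref{eq-ses-mc-pair}; this forces the connecting homomorphism in \eqref{eq-les-mh-pair} to vanish, delivering the claimed short exact sequences of magnitude homology.

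For the long exact sequence \eqref{eq-les-ph-pair} in bigraded path homology, I would observe that the (split) short exact sequence $0 \to E^1(A) \to E^1(X) \to E^1(X,A) \to 0$ of magnitude homology groups is, more strongly, a short exact sequence of chain complexes with respect to $d^1$, since $d^1$ is induced from the underlying reachability chain differential and the maps in \eqref{eq-ses-mc-pair} are chain maps for $\partial$. Taking $d^1$-homology then produces the desired long exact sequence on $E^2 = \PH_{\ast,\ast}$; under the bidegree conventions $\PH_{k,\ell} = E^2_{\ell,k-\ell}$ and the fact that $d^1\colon E^1_{i,j} \to E^1_{i-1,j}$, the connecting homomorphism shifts $(k,\ell) \mapsto (k-1,\ell-1)$, as required. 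I expect the chief technical obstacle to be the terminal-segment lemma and its use in verifying that $s$ is a chain map of magnitude chains; once \eqref{eq-ses-mc-pair} has been split at the $\MC$ level, the subsequent passages from $E^0$ to $E^1$ to $E^2$ are essentially formal.
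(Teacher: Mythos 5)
Your proposal is correct and follows essentially the same route as the paper: convexity gives strictness of the filtered inclusion $\RC(A)\hookrightarrow\RC(X)$ and hence the short exact sequence of filtration quotients; the no-entry hypothesis gives a chain-level splitting via the observation that a generator of $\MC(X)$ lies in $\MC(A)$ exactly when its final entry does (you build a section of the quotient where the paper builds the complementary retraction $p$, but these are equivalent); and the long exact sequence in $\PH$ is obtained by taking $d^1$-homology of the resulting split short exact sequences, with the same bidegree bookkeeping.
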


\begin{proof}
    We begin with the proof that if $A\subseteq X$ is convex,
    then we obtain the sequence~\eqref{eq-ses-mc-pair}.
    Convexity means that the length of a generator 
    of $\RC(A)$ does not depend on whether we regard it
    as a generator of $\RC(A)$ or $\RC(X)$.
    The result can now be proved directly from the 
    explicit description of magnitude chains given
    in~\Cref{subsection-E0}.
    Alternatively, recall that a map of filtered chain complexes
    $f\colon C\to D$ is called \emph{strict}
    if for each $\ell$ we have 
    $f(F_\ell C) = f(C)\cap F_\ell D$.
    Note that the latter condition is equivalent to
    $f^{-1}(F_\ell D) = F_\ell C + \ker(f)$.
    If $C\hookrightarrow D$ is a strict inclusion of 
    chain complexes, then one obtains a short
    exact sequence of filtration quotients:
    \[
        0
        \to
        \frac{F_\ell C }
        {F_{\ell-1} C }
        \to
        \frac{F_\ell D }
        {F_{\ell-1} D }
        \to
       \frac{ F_\ell\left(
             D / C 
        \right)}
        { F_{\ell-1}\left(
             D / C 
        \right)}
        \to
        0
    \]
    See~\cite[Section 0120]{Stacks} or~\cite[Lemme~1.1.9]{Deligne}.
    Our assumption ensures that the 
    inclusion map $\RC(A)\hookrightarrow \RC(X)$ is strict, 
    so the result follows from the last paragraph.

    Applying homology to the short exact 
    sequence~\eqref{eq-ses-mc-pair} now gives the long exact
    sequence~\eqref{eq-les-mh-pair}.

    Let us assume for the rest of the proof that
    there are no edges into $A$ from $X\setminus A$.
    This `no-entry' condition on $A$ means that the only vertices
    of $X$ that admit paths into $A$ are those that already lie in
    $A$.
    (In particular, the only paths in $X$ between vertices of $A$
    are those that lie wholly in $A$, so this condition alone ensures that 
    $A\subseteq X$ is convex.)
    
    We now show that~\eqref{eq-ses-mc-pair} is split.
    Observe that, by the no-entry condition, a generator of
    $\MC(X)$ lies in $\MC(A)$ if and only if its final entry 
    lies in $A$.
    Thus we may define a map $p\colon \MC(X)\to\MC(A)$ by 
    the following rule.
    \[
        p(x_0,\ldots,x_k)
        =
        \begin{cases}
            (x_0,\ldots,x_k) & \text{if }x_k\in A
            \\
            0 & \text{if }x_k\not\in A
        \end{cases}
    \]
    Then $p$ is a chain map because the differential of magnitude 
    chains sends a generator to a linear combination of generators
    with the same start and end points.
    And it is a splitting because it sends the 
    generators of $\MC(A)$ to themselves.

    So we have shown that our stronger assumption gives us a
    splitting of~\eqref{eq-ses-mc-pair}.
    It follows that the connecting maps of~\eqref{eq-les-mh-pair}
    vanish and~\eqref{eq-les-mh-pair} splits into short exact 
    sequences.
    The maps in these short exact sequences are obtained from maps
    of filtered complexes, and so they commute with the differential
    $d^1$ of the magnitude-path spectral sequence.
    In terms of magnitude chains, 
    the differential $d^1$ has the form
    $d^1\colon\MH_{\ast,\ast}(-)\to\MH_{\ast-1,\ast-1}(-)$,
    and consequently  we obtain the long exact 
    sequence~\eqref{eq-les-ph-pair} with the specified degree shifts.
\end{proof}

%%%%%%%%%%%%%%%%%%%%%%%%%%%%%%%%%%%%%%%%%%%%%%%%%%%%

\section{Eilenberg--Zilber theorems}\label{sec:E-Z}

This section and the next concern the behaviour of the pages of the magnitude-path spectral sequence with respect to the box product of directed graphs.

\begin{definition}
\label{def:box}
The \demph{box product} of directed graphs \(G\) and \(H\) is the directed graph \(G \square H\) with vertex set \(V(G \square H) = V(G) \times V(H)\), and \(((g_1,h_1),(g_2,h_2)) \in E(G \square H)\) if either \(g_1 = g_2\) and \((h_1,h_2) \in E(H)\), or \(h_1 = h_2\) and \((g_1,g_2) \in E(G)\).
\end{definition}

In this section we will prove two Eilenberg--Zilber-style theorems. The first of these, \Cref{thm:filt_EZ}, says that for any directed graphs \(G\) and \(H\) the filtered chain complex \(\RC(G \square H)\) is naturally chain homotopy equivalent to the tensor product \(\RC(G) \otimes \RC(H)\) via filtration-preserving chain maps and chain homotopies. The second, \Cref{thm:EZ_box_SS}, relates the magnitude-path spectral sequence $E(G\square H)$ to the spectral sequences \(\MPSS(G)\) and \(\MPSS(H)\) of the factors. 

Before we can state those results formally and prove them (which we do in \Cref{sec:EZ_proofs}), we must collect some terminology and facts concerning \emph{pairings} and \emph{tensor products} of spectral sequences.

\subsection{Pairings and tensor products of spectral sequences}

First, let us briefly recall the relevant facts about the homology of tensor products of ordinary chain complexes. Given any chain complexes \(C\) and \(D\) over a ring \(R\), there is a map
\begin{equation}
\label{eq:homology_prod}
    \alpha\colon H_\ast(C) \otimes H_\ast(D) \to H_\ast(C \otimes D)
\end{equation}
determined by \([c] \otimes [d] \mapsto [c \otimes d]\). We shall refer to this map as the \demph{homology product}. If the ring \(R\) is a principal ideal domain (P.I.D.) and \(C\) happens to consist of flat \(R\)-modules, then the classical K\"unneth theorem for chain complexes says that the homology product fits into a short exact sequence, as follows.

\begin{theorem}[Algebraic K\"unneth theorem]
\label{thm:alg_kunneth}
    Let \(R\) be a P.I.D. and let \(C\) be a chain complex of flat \(R\)-modules. Then, given any chain complex \(D\) of \(R\)-modules, we have for each \(n \in \mathbb{N}\) a short exact sequence
    \begin{align*}
        0 \to \bigoplus_k H_k(C) \otimes H_{n-k}(D) \xto{\alpha} H_n(C \otimes D) \to \bigoplus_k \tor(H_k(C), H_{n-k-1}(D)) \to 0
    \end{align*}
    natural in \(D\) and with respect to chain maps \(C \to C'\) where \(C'\) is also flat. The sequence splits, but not naturally.
\end{theorem}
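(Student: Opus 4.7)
The plan is to adapt the classical proof of the algebraic K\"unneth theorem via the tautological resolution of $C$ by its cycles and boundaries. Write $Z_\bullet$ and $B_\bullet$ for the graded submodules of cycles and boundaries in $C$, each viewed as a chain complex equipped with the zero differential. There is a short exact sequence of chain complexes
$$0 \to Z \hookrightarrow C \xrightarrow{\partial} B[-1] \to 0,$$
where $(B[-1])_n = B_{n-1}$. Because $R$ is a PID and submodules of flat $R$-modules are again flat (flatness over a PID being equivalent to torsion-freeness), both $Z$ and $B$ consist of flat modules. Tensoring with $D$ therefore preserves exactness and produces a short exact sequence of chain complexes
$$0 \to Z \otimes D \to C \otimes D \to B[-1] \otimes D \to 0.$$

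Next I would pass to the associated long exact sequence in homology. Since $Z$ and $B$ carry the zero differential and their components are flat, the K\"unneth formula for a complex with trivial differential---which is immediate---gives
$$H_n(Z \otimes D) = \bigoplus_k Z_k \otimes H_{n-k}(D), \qquad H_n(B[-1] \otimes D) = \bigoplus_k B_k \otimes H_{n-k-1}(D).$$
A chain-level check identifies the connecting homomorphism as the direct sum of the maps $\iota \otimes \mathrm{id}$, where $\iota \colon B_k \hookrightarrow Z_k$ is the canonical inclusion. The long exact sequence accordingly breaks in each degree $n$ into a short exact sequence relating $H_n(C \otimes D)$ to the cokernel of $\iota \otimes \mathrm{id}$ on $H_{n-k}(D)$-coefficients and the kernel of $\iota \otimes \mathrm{id}$ on $H_{n-k-1}(D)$-coefficients.

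To identify those kernels and cokernels I would apply the short exact sequence $0 \to B_k \to Z_k \to H_k(C) \to 0$. Flatness of $B_k$ implies $\tor_i(B_k,-) = 0$ for $i \geq 1$, so the long exact sequence for $\tor$ collapses to
$$0 \to \tor(H_k(C), M) \to B_k \otimes M \to Z_k \otimes M \to H_k(C) \otimes M \to 0$$
for every $R$-module $M$. Setting $M = H_{n-k}(D)$ identifies the cokernel as $\bigoplus_k H_k(C) \otimes H_{n-k}(D)$, and setting $M = H_{n-k-1}(D)$ identifies the kernel as $\bigoplus_k \tor(H_k(C), H_{n-k-1}(D))$. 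A last chain-level chase confirms that the resulting left-hand map agrees with the homology product $\alpha$. Naturality in $D$ and in flat-to-flat chain maps $C \to C'$ is inherited from the manifest functoriality of the canonical $Z$--$C$--$B[-1]$ sequence, the long exact sequence in homology, and the $\tor$ long exact sequence.

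The non-natural splitting is a classical addendum handled by choosing, for each $n$, an $R$-linear splitting at the graded-module level; over a PID this can be arranged by reducing to a termwise free subcomplex of $C$ or, alternatively, by expressing $C$ as a filtered colimit of finitely generated subcomplexes (where flat = free) and splitting in the colimit. I expect the main obstacle to be the honest verification that the connecting map in the long exact sequence is exactly $\iota \otimes \mathrm{id}$, with correct signs, and the subsequent check that the left-hand isomorphism of the K\"unneth sequence really is the homology product $\alpha$ rather than some competitor producing the same exact sequence; once that bookkeeping is complete, the remaining steps are routine diagram-chasing.
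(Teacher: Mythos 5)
The paper does not actually prove this theorem: immediately after the statement it cites Cartan--Eilenberg [VI.3.3] (noting that P.I.D.s are hereditary). Your proposal supplies the standard textbook proof via the cycles--boundaries sequence $0 \to Z \to C \to B[-1] \to 0$, and the main body of it is correct: over a P.I.D.\ flatness equals torsion-freeness, so $Z$ and $B$ are termwise flat; the tensored sequence is exact; the connecting homomorphism is $\iota \otimes \mathrm{id}$ up to sign; and the four-term $\tor$ sequence identifies the relevant kernel and cokernel. One small mis-attribution: the left-exactness of
\[
0 \to \tor(H_k(C),M) \to B_k \otimes M \to Z_k \otimes M \to H_k(C)\otimes M \to 0
\]
comes from $\tor_1(Z_k,M)=0$, i.e.\ flatness of $Z_k$, not of $B_k$; since you have already established that both are flat, this is harmless.

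The genuinely weak point is the splitting. Your filtered-colimit suggestion does not work: a filtered colimit of split short exact sequences is exact but need not split, because the chosen splittings need not be compatible with the transition maps. Your other suggestion is closer to the truth but cannot be run literally inside $C$: since $C$ is only assumed flat, $B_{n-1}$ need not be projective (over $\mathbb{Z}$, the module $\mathbb{Q}$ is flat but not projective), so the sequences $0 \to Z_n \to C_n \to B_{n-1} \to 0$ need not split and the classical free-module argument is unavailable. The standard repair is to choose a termwise-free complex $C'$ with a quasi-isomorphism $C' \to C$; the exact sequence you have already established, together with its naturality and the five lemma, shows that $C' \otimes D \to C \otimes D$ is a quasi-isomorphism identifying the two K\"unneth sequences, and the sequence for $C'$ splits by the projective argument. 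Since the paper never uses the splitting downstream, this does not affect anything else, but as written that clause of the theorem is not fully justified by your argument.
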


A proof of \Cref{thm:alg_kunneth} can be found in \cite[VI.3.3]{CartanEilenberg}. (The statement there is for hereditary rings, which includes the case of P.I.D.s \cite[p.13]{CartanEilenberg}.)

The algebraic K\"unneth theorem has the following corollary.

\begin{corollary}
\label{cor:alg_kunneth}
    If \(C\) and \(D\) are chain complexes over a field, then the homology product \(\alpha\colon H_\ast(C) \otimes H_\ast(D) \to H_\ast(C \otimes D)\) is an isomorphism. \qed
\end{corollary}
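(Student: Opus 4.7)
The plan is to invoke Theorem~\ref{thm:alg_kunneth} directly; the corollary follows from it essentially without work, so the ``proof'' is really an observation about what the hypotheses reduce to when the ground ring is a field.

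First, I would note that any field $R$ is a principal ideal domain, so the standing hypothesis of Theorem~\ref{thm:alg_kunneth} is satisfied. Next, every module over a field is a vector space, and every vector space is free, hence in particular flat. Thus the hypothesis that $C$ consists of flat $R$-modules holds automatically, as does the analogous property needed of $D$ to invoke the conclusion.

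Finally, I would invoke the standard fact that $\tor(M,N) = 0$ for all $R$-modules $M$ and $N$ whenever $R$ is a field, since every module admits a free (hence flat) resolution of length zero. Applying this to every term on the right-hand side of the short exact sequence of Theorem~\ref{thm:alg_kunneth} makes that third term vanish, so the sequence collapses to the statement that
\[
    \alpha\colon \bigoplus_k H_k(C) \otimes H_{n-k}(D) \xto{\cong} H_n(C \otimes D)
\]
is an isomorphism for every $n$, which is precisely the claim.

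There is no substantive obstacle to overcome: the entire content of the corollary lies in recognising that both hypotheses of the algebraic K\"unneth theorem trivialise over a field, and that the Tor correction term disappears for the same reason. If anything warranted a comment, it would simply be to remind the reader that flatness and the vanishing of Tor are both automatic in this setting.
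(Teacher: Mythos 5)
Your proof is correct and is exactly the argument the paper intends (the corollary is stated with a \qed precisely because it follows immediately from \Cref{thm:alg_kunneth} once one notes that a field is a P.I.D., all modules over it are flat, and all Tor terms vanish). No further comment is needed.
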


Now, fix a ground ring \(R\) (not necessarily a P.I.D.) and let \(E\) and \({}'E\) be any two spectral sequences of \(R\)-modules. For each \(r \geq 0\), we can form the tensor product \(E^r \otimes {}'E^r\) of bigraded \(R\)-modules---
\[(E^r \otimes {}'E^r)_{pq} = \bigoplus_{\substack{s+u=p \\ t+v = q}} E^r_{st} \otimes {}'E^r_{uv}\]
---and endow it with the differential \(d^r_\otimes(x \otimes y) = d_E^r(x) \otimes y + (-1)^{s+t} x \otimes d_{{}'E}^r(y)\). In general, the family \((E^r \otimes {}'E^r, d_\otimes^r)_{r \geq 0}\) need not be a spectral sequence, for, although the homology product always provides a natural map
\[
E^{r+1} \otimes {}'E^{r+1} \cong H_\ast(E^r) \otimes H_\ast ({}'E^r) \xto{\alpha} H_\ast(E^r \otimes {}'E^r),
\]
this need not be an isomorphism. If the ring \(R\) is a field, however, then \Cref{cor:alg_kunneth} implies we have a \demph{tensor product spectral sequence} \(\{E^r \otimes {}'E^r, d_\otimes^r\}\).

\begin{definition}
    Let \(E, {}'E\) and \({}''E\) be spectral sequences of \(R\)-modules. A \demph{pairing}
    \(\phi^\ast\colon (E,{}'E) \to {}''E\)
    is a sequence of maps of bigraded \(R\)-modules \(\phi^r\colon E^r \otimes {}'E^r \to {}''E^r\) with the following properties:
    \begin{enumerate}
        \item Each \(\phi^r\) is a chain map with respect to the differentials on page \(r\).
        \item For every \(r\) this diagram commutes:
        \[
        \begin{tikzcd}
        E^{r+1} \otimes {}'E^{r+1} \arrow{r}{\phi^{r+1}} \arrow[swap]{d}{\cong} & {}''E^{r+1} \\
        H_\ast(E^r) \otimes H_\ast({}'E^r) \arrow[swap]{d}{\alpha} \\
        H_\ast(E^r \otimes {}'E^r) \arrow[swap]{r}{H_\ast(\phi^r)} & H_\ast({}''E^r) \arrow[swap]{uu}{\cong}.
        \end{tikzcd}
        \]
    \end{enumerate}
    If the ground ring \(R\) is a field, a pairing is precisely a map of spectral sequences.
\end{definition}

\begin{definition}
    Let \(A\) and \(B\) be filtered chain complexes. Their \demph{(filtered) tensor product} is the chain complex \(A \otimes B\) equipped with the filtration in which
    \[F_\ell (A \otimes B) = \sum_{s+t=\ell} F_s A \otimes F_t B.\]
\end{definition}

A chain map \(f\colon A \otimes B \to C\) is filtered if and only if for every \(s\) and \(t\) we have \(f(F_sA \otimes F_tB) \subseteq F_{s+t}C\). Any such map induces a map
\[\bigoplus_{p+q=n} \frac{F_p A}{F_{p-1} A} \otimes \frac{F_q B}{F_{q-1} B} \to \frac{F_{n} C}{F_{n-1} C}\]
since \(f(F_{p-1}A \otimes F_q B) \subseteq F_{p+q-1} C\) and  \(f(F_{p}A \otimes F_{q-1} B) \subseteq F_{p+q-1} C \). It follows that there is an induced map \(\overline{f}\colon E^0(A) \otimes E^0(B) \to E^0(C)\). Indeed, \(f\) induces a map \(E^r(A) \otimes E^r(B) \to E^r(C)\) for every \(r\), and these maps comprise a pairing of spectral sequences. (For a detailed proof, see \cite[Lemma 3.5.2]{Helle}.)

\begin{lemma}\label{lem:helle}
Let \(A,B\) and \(C\) be filtered chain complexes over \(R\). Any filtered chain map \(f\colon A \otimes B \to C\) induces a pairing \(\phi\colon (E(A), E(B)) \to E(C)\) with \(\phi^0 = \overline{f}\). If \(R\) is a field, then \(f\) induces a map of spectral sequences \(\phi\colon E(A) \otimes E(B) \to E(C)\).
\qed
\end{lemma}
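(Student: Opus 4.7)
The plan is to construct $\phi^r$ explicitly on each page by a formula at the level of representatives, and then to verify the two pairing axioms by direct manipulation. I will use the standard description
\[
E^r_{p,q}(A) = Z^r_{p,q}(A) / \bigl( Z^{r-1}_{p-1,q+1}(A) + \partial Z^{r-1}_{p+r-1,q-r+2}(A) \bigr),
\]
where $Z^r_{p,q}(A) = F_p A_{p+q} \cap \partial^{-1}(F_{p-r}A_{p+q-1})$, and the analogous descriptions for $B$, $A \otimes B$, and $C$ (see, e.g.~\cite[5.4.6]{Weibel}); the canonical isomorphism $E^{r+1}(A) \cong H_\ast(E^r(A), d^r)$ will play the same role throughout.

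Given $a \in Z^r_{s,t}(A)$ and $b \in Z^r_{u,v}(B)$, the Leibniz rule together with the inclusions $F_x A \otimes F_y B \subseteq F_{x+y}(A \otimes B)$ place $a \otimes b$ in $Z^r_{s+u,t+v}(A \otimes B)$, and since $f$ is a filtered chain map, $f(a \otimes b)$ lies in $Z^r_{s+u,t+v}(C)$. I would therefore define $\phi^r\colon E^r(A) \otimes E^r(B) \to E^r(C)$ by $[a] \otimes [b] \mapsto [f(a \otimes b)]$. The core of the argument is well-definedness, which reduces to two substitutions in the $A$ variable (the $B$ variable being symmetric). Replacing $a$ by an element $\alpha \in Z^{r-1}_{s-1,t+1}(A)$, a short index check shows $\alpha \otimes b \in Z^{r-1}_{s+u-1,t+v+1}(A \otimes B)$, so $f(\alpha \otimes b)$ vanishes in $E^r(C)$. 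Replacing $a$ by $\partial a''$ with $a'' \in Z^{r-1}_{s+r-1,t-r+2}(A)$, I would rewrite $\partial a'' \otimes b = \partial(a'' \otimes b) - (-1)^{|a''|}\, a'' \otimes \partial b$: the first term is the boundary of an element of $Z^{r-1}_{s+u+r-1,t+v-r+2}(A \otimes B)$, and the second lies in $Z^{r-1}_{s+u-1,t+v+1}(A \otimes B)$, so both map under $f$ into the relations defining $E^r_{s+u,t+v}(C)$.

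Once $\phi^r$ is well-defined, the chain-map identity $\phi^r \circ d^r_\otimes = d^r \circ \phi^r$ will follow immediately from $f\partial = \partial f$ together with the Leibniz rule. For the second pairing axiom, I would observe that $H_\ast(\phi^r)\colon H_\ast(E^r(A) \otimes E^r(B)) \to H_\ast(E^r(C))$ is still given on representatives by $[a] \otimes [b] \mapsto [f(a \otimes b)]$; composing with the homology product $\alpha$ and transporting through the canonical isomorphisms $E^{r+1} \cong H_\ast(E^r)$ then recovers exactly the formula defining $\phi^{r+1}$, so the required diagram commutes. When $R$ is a field, \Cref{cor:alg_kunneth} makes $\alpha$ an isomorphism on every page, upgrading $\phi$ to a map of spectral sequences with source $E(A) \otimes E(B)$. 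The principal obstacle is the indexing bookkeeping in the well-definedness step; once it is checked that $a'' \otimes \partial b$ lands in $Z^{r-1}_{s+u-1,t+v+1}(A \otimes B)$, the remainder of the argument is mechanical.
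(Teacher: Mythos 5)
Your proof is correct. Note, however, that the paper does not actually prove this lemma: it cites \cite[Lemma 3.5.2]{Helle} for the details and leaves the statement with a formal \(\square\). Your argument is therefore a self-contained substitute for the cited reference, and it follows the standard route one would expect: you construct \(\phi^r\) on representatives via the explicit model \(E^r_{p,q} = Z^r_{p,q}/(Z^{r-1}_{p-1,q+1} + \partial Z^{r-1}_{p+r-1,q-r+2})\) from Weibel 5.4.6, check that \(a\otimes b\) lands in the right \(Z^r\) of the filtered tensor product, and then verify independence of representatives by the two substitutions \(a \mapsto \alpha\) and \(a \mapsto \partial a''\). The index bookkeeping you flag as the principal obstacle does check out: \(\alpha\otimes b \in Z^{r-1}_{s+u-1,\,t+v+1}\); \(a''\otimes b \in Z^{r-1}_{s+u+r-1,\,t+v-r+2}\) so that \(\partial(a''\otimes b)\) is a relation; and \(a''\otimes\partial b \in Z^{r-1}_{s+u-1,\,t+v+1}\), all of which map under the filtered chain map \(f\) into the relations defining \(E^r_{s+u,t+v}(C)\). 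The chain-map identity and the compatibility with the canonical isomorphisms \(E^{r+1}\cong H_\ast(E^r)\) then follow formally from the uniform description on representatives, as you say, and the field case is a direct application of \Cref{cor:alg_kunneth}. One point worth making explicit (though it is routine): to descend from a pairing on \(Z^r(A)\times Z^r(B)\) to a map on \(E^r(A)\otimes E^r(B)\), one uses bilinearity of \((a,b)\mapsto [f(a\otimes b)]\) and then the two one-variable substitution checks you perform, invoking symmetry for the \(B\) slot. With that observation spelled out, your proof is complete.
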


We are now equipped to state and prove the Eilenberg--Zilber theorems.

\subsection{The Eilenberg--Zilber theorems}
\label{sec:EZ_proofs}

In Theorem 5.1 of \cite{HepworthRoff2023}, 
the classical Eilenberg--Zilber theorem for simplicial sets is applied to prove 
that there is a chain homotopy equivalence
\[\nabla\colon \bpc(G) \otimes \bpc(H) \rightleftarrows \bpc(G \square H): \Delta.\]
Here we extend this to an equivalence of 
\emph{filtered} chain complexes, from which we will obtain, via \Cref{lem:helle}, an induced pairing of spectral sequences
\[\nabla^\ast\colon \MPSS(G) \otimes \MPSS(H) \to \MPSS(G \square H).\]

\begin{theorem}[Filtered Eilenberg--Zilber for the reachability chain complex]
\label{thm:filt_EZ}
    Let \(G\) and \(H\) be directed graphs. 
    Then $\RC(G)\otimes\RC(H)$ and $\RC(G\square H)$
    are chain homotopy equivalent, via maps and chain
    homotopies that are natural and that 
    respect the filtrations of the two sides.
    The chain homotopy equivalence
    \[\nabla\colon \bpc(G) \otimes \bpc(H) \to \bpc(G \square H)\]
    can be described explicitly on generators as follows:
    \begin{equation}\label{eq:filt_EZ}
        \nabla
        \left(
        (g_0,\ldots,g_p)
        \otimes
        (h_0,\ldots,h_q)
        \right)
        =
        \sum_\sigma
        \sign(\sigma)
        ((g_{i_0},h_{j_0}),\ldots,(g_{i_r},h_{j_r})).
    \end{equation}
    Here $r=k+k'$, and $\sigma$ runs over all sequences
    $((i_0,j_0),\ldots,(i_r,j_r))$ in which 
    $0\leq i_s\leq k$, $0\leq j_s\leq k'$,
    and in which each term $(i_{s+1},j_{s+1})$ 
    is obtained from $(i_s,j_s)$ by increasing 
    exactly one of the components by $1$. 
    The coefficient $\sign(\sigma)$ is defined to be $(-1)^n$ where $n$ is the number of pairs $(i,j)$ for which $i=i_k\implies j< j_k$.
\end{theorem}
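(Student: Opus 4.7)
The plan is to work at the level of simplicial sets, leveraging the identification $\RC(G) = N(\Nv(G))$ of filtered complexes established in \Cref{rmk:nerve}. Since the underlying chain homotopy equivalence (ignoring filtrations) is already known from Theorem~5.1 of \cite{HepworthRoff2023}, the real new content here is compatibility with the length filtration. First I would establish an isomorphism $\Nv(G \square H) \cong \Nv(G) \times \Nv(H)$ of filtered simplicial sets, where the product carries the filtration $F_\ell(A \times B) = \bigcup_{s+t=\ell} F_s A \times F_t B$. The underlying bijection of simplicial sets sends $((g_i, h_i))_i$ to $((g_i)_i, (h_i)_i)$, and the match of filtrations follows from the identity $d_{G \square H}((g,h),(g',h')) = d_G(g,g') + d_H(h,h')$ for the shortest-path metric on a box product, which implies that the length of any simplex of $\Nv(G \square H)$ equals the sum of the lengths of its two projections. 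The theorem is thereby reduced to a filtered Eilenberg--Zilber statement for the classical shuffle map $\nabla$ and Alexander--Whitney map $\Delta$ on normalized chains of filtered simplicial sets.

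Next I would verify by direct calculation that $\nabla$ and $\Delta$ respect the filtration. For $\nabla$, consider a shuffle term $((g_{i_0}, h_{j_0}), \ldots, (g_{i_r}, h_{j_r}))$ appearing in \eqref{eq:filt_EZ}: each step $(i_s, j_s) \to (i_{s+1}, j_{s+1})$ advances exactly one coordinate, so its contribution to the length in $G \square H$ is either $d_G(g_{i_s}, g_{i_s+1})$ or $d_H(h_{j_s}, h_{j_s+1})$, depending on which coordinate advances. Summing over $s$, the total length equals $\ell(g_0, \ldots, g_p) + \ell(h_0, \ldots, h_q)$, so $\nabla$ preserves length on the nose. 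The map $\Delta$ sends $((g_0, h_0), \ldots, (g_k, h_k))$ to $\sum_p (g_0, \ldots, g_p) \otimes (h_p, \ldots, h_k)$; each summand has length $\sum_{i<p} d_G(g_i, g_{i+1}) + \sum_{p \leq i < k} d_H(h_i, h_{i+1})$, bounded above by the length of the original simplex, so $\Delta$ preserves filtration as well. One also checks directly from the formulas that $\Delta \nabla = \mathrm{id}$.

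The main obstacle is producing a natural chain homotopy $h$ with $\nabla \Delta - \mathrm{id} = \partial h + h\partial$ that respects the filtration. I would handle this by an acyclic models argument carried out in the category of filtered chain complexes, with models the pairs $(\Delta^p, \Delta^q)$ of standard simplices, equipped with the trivial length filtration in which every simplex has length zero: both functors $(G,H) \mapsto \RC(G) \otimes \RC(H)$ and $(G,H) \mapsto \RC(G \square H)$ are representable in each degree and filtration-acyclic on models, since $\Delta^p \times \Delta^q$ is contractible and all its simplices lie in filtration degree zero. A filtered version of the standard acyclic models lemma then produces a natural filtration-preserving chain homotopy. Alternatively, one may exhibit an explicit Eilenberg--MacLane chain homotopy and observe that each of its terms is built from face and degeneracy operators: faces do not increase length (by the triangle inequality) and degeneracies preserve it (inserting a repeated vertex contributes $d(x,x) = 0$), so the resulting homotopy automatically respects the length filtration.
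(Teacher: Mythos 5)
Your proposal is correct and follows essentially the same route as the paper: reduce via $\RC(-)=N(\Nv(-))$ to the identification $\Nv(G\square H)\cong\Nv(G)\times\Nv(H)$ of \emph{filtered} simplicial sets (using the $\ell_1$ formula for the box-product metric), then invoke the classical Eilenberg--Zilber maps and homotopy and check that they preserve the filtration. The only divergence is in that last check: the paper deduces filtration-preservation of $\nabla$, $\Delta$ and $\mathrm{SHI}$ in one stroke from their naturality applied to the inclusions $F_iA\times F_jB\hookrightarrow A\times B$ (its Lemma on filtered Eilenberg--Zilber), whereas you verify it by direct length computations and, for the homotopy, by inspecting the explicit Eilenberg--MacLane formula built from faces and degeneracies --- which is sound (your acyclic-models alternative is the shakier of your two options and is not needed).
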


In the statement of \Cref{thm:filt_EZ}, the sequences $\sigma$
can be regarded as the paths in $\mathbb{Z}^2$ from $(0,0)$
to $(k,k')$ that may only go upwards or to the right.
With this in mind, $\sign(\sigma)$ is $(-1)^n$
where $n$ is the number of lattice points 
that are on or above the $x$-axis,
and strictly below the path itself.
And if we regard each $(g_i,h_j)$ as being a label on lattice
point $(i,j)$, then the summand 
$((g_0,h_0),\ldots,(g_k,h_{k'}))$
associated to $\sigma$ is precisely the list of vertices
visited by the path.

To prove \Cref{thm:filt_EZ}, we will make use of a filtered variant of the classical Eilenberg--Zilber theorem. The classical theorem says that
we have, for any simplicial sets $A$ and $B$, 
the Eilenberg--Zilber map
$\nabla\colon NA\otimes NB\to N(A\times B)$ and the Alexander--Whitney map
$\Delta\colon N(A\times B)\to NA\otimes NB$, which satisfy \(\Delta \circ \nabla = \mathrm{Id}_{NA \otimes NB}\),
and a chain homotopy $\mathrm{SHI}$
between $\nabla\circ\Delta$ 
and $\mathrm{Id}_{N(A\times B)}$.
(For details, see Section~5 of~\cite{EM1953}
or Section~2 of~\cite{GonzalezDiazReal};
in particular the latter reference contains
explicit descriptions of 
$\Delta$, $\nabla$ and $\mathrm{SHI}$.) For present purposes, it is important to know that if \(A\) and \(B\) are filtered, then all these maps are automatically filtration-preserving.

\begin{lemma}\label{lemma-EZ-filtered}
    Let $A$ and $B$ be filtered simplicial sets. We equip their product $A\times B$ with the filtration given by $F_\ell (A\times B) = \bigcup_{i+j=\ell} F_iA\times F_jB$.
    Then the Alexander--Whitney map
    $\Delta\colon N(A\times B)\to NA\otimes NB$, the Eilenberg--Zilber map
    $\nabla\colon NA\otimes NB\to N(A\times B)$,
    and the chain homotopy $\mathrm{SHI}$
    between $\nabla\circ\Delta$ 
    and $\mathrm{Id}_{N(A\times B)}$
    are all filtration-preserving.
    Thus, $NA\otimes NB$ and $N(A\times B)$ 
    are chain homotopy equivalent as filtered chain
    complexes.
\end{lemma}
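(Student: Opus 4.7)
The plan is to derive everything from the naturality of the classical Eilenberg--Zilber data, exploiting the fact that filtrations of simplicial sets are filtrations by sub-simplicial sets, and that $\nabla$, $\Delta$, and $\mathrm{SHI}$ are natural transformations in both arguments. First I would unpack the filtrations on the two complexes: by definition $F_\ell(A \times B) = \bigcup_{i+j=\ell} F_i A \times F_j B$ as a sub-simplicial set of $A \times B$, so
\[
F_\ell N(A \times B) \;=\; \sum_{i+j=\ell} N(F_i A \times F_j B),
\]
while $F_\ell(NA \otimes NB) = \sum_{i + j = \ell} N(F_i A) \otimes N(F_j B)$. It therefore suffices to show that each of $\nabla$, $\Delta$, and $\mathrm{SHI}$ restricts correctly on the piece indexed by a single pair $(i, j)$.

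For the shuffle map, let $\iota_A\colon F_i A \hookrightarrow A$ and $\iota_B\colon F_j B \hookrightarrow B$ denote the inclusions. Naturality of $\nabla$ gives the commutative square
\[
\nabla_{A, B} \circ (N\iota_A \otimes N\iota_B) \;=\; N(\iota_A \times \iota_B) \circ \nabla_{F_i A, F_j B},
\]
whose right-hand side lands in $N(F_i A \times F_j B) \subseteq F_{i+j} N(A \times B)$. Hence $\nabla_{A, B}$ carries $F_i NA \otimes F_j NB$ into $F_{i+j} N(A \times B)$, and summing over $i + j = \ell$ gives the filtration-preservation of $\nabla$. The Alexander--Whitney map is handled dually: an $n$-simplex $(a, b) \in F_\ell(A \times B)$ lies in some $F_i A \times F_j B$ with $i + j \leq \ell$, and the corresponding naturality square places $\Delta(a, b)$ in $N(F_i A) \otimes N(F_j B) \subseteq F_\ell(NA \otimes NB)$. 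The homotopy $\mathrm{SHI}$ is treated identically, argued separately in each simplicial degree; the key input is that the Eilenberg--MacLane formula for $\mathrm{SHI}$ is natural in both variables.

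With all three maps filtration-preserving, the filtered chain homotopy equivalence is immediate: the relations $\Delta \circ \nabla = \mathrm{Id}$ and $\nabla \circ \Delta - \mathrm{Id} = \partial\,\mathrm{SHI} + \mathrm{SHI}\,\partial$ from the classical theorem hold verbatim, and they now hold in the filtered category. There is no real obstacle here beyond bookkeeping; the main thing to verify carefully is that the specific formulas for $\nabla$, $\Delta$, and $\mathrm{SHI}$ one takes (for instance those in the González-Díaz--Real reference) really are natural with respect to arbitrary simplicial maps, so that the naturality squares above are legitimate. Once this is in hand, the argument is a mechanical application of naturality to the inclusions of sub-simplicial sets defining the filtrations.
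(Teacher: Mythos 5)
Your proposal is correct and follows essentially the same route as the paper: both decompose the filtration pieces as sums of images of $N(F_iA)\otimes N(F_jB)$ and $N(F_iA\times F_jB)$ and then invoke naturality of $\Delta$, $\nabla$, and $\mathrm{SHI}$ with respect to the inclusions of the filtration sub-simplicial sets. No gaps.
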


\begin{proof}
    Observe that 
    $F_p[NA\otimes NB]$ is the span of the images
    of the maps $N(F_iA)\otimes N(F_jB)\to N(A)\otimes N(B)$
    for $i+j=p$,
    and $F_p[N(A\times B)]$ is the span of the images
    of the maps $N(F_iA\times F_jB)\to N(A\otimes B)$
    for $i+j=p$.
    Thus we can prove that $\Delta$, say, preserves 
    filtrations by showing that the following
    diagram commutes:
    \[
    \begin{tikzcd}
        N(F_iA\times F_jB)
        \arrow{r}{\Delta}
        \arrow{d}
        &
        N(F_iA) \otimes N(F_jB)
        \arrow{d}
        \\
        N(A\times B)\arrow{r}{\Delta}
        &
        N(A) \otimes N(B).
    \end{tikzcd}
    \]
    But this is an immediate consequence of the
    naturality of $\Delta$.
    The proof that $\nabla$ and $\mathrm{SHI}$
    are filtration-preserving is similar.
\end{proof}

We now prove \Cref{thm:filt_EZ}. The proof makes use of the fact that the shortest path metric on the box product \(G \square H\) coincides with the \(\ell_1\)-metric on the product of the vertex sets:
\begin{equation}\label{eq:box_ell1}
    d_{G \square H}((g_0,h_0),(g_1,h_1)) = d_G(g_0,g_1) + d_H(h_0,h_1)
\end{equation}
for each \((g_1,h_1), (g_2, h_2) \in V(G) \times V(H)\). (See, for example, \cite[Corollary 1.3]{DIMZ}.) 
    
\begin{proof}[Proof of \Cref{thm:filt_EZ}]
    By \Cref{rmk:nerve} and \Cref{lemma-EZ-filtered}, there is a chain homotopy equivalence of filtered chain complexes
    \begin{equation}
    \label{eq:filt_EZ1}
        \RC(G) \otimes \RC(H) = N(\Nv(G))\otimes N(\Nv(H))\rightleftarrows N(\Nv(G)\times\Nv(H)).
    \end{equation}
    It remains to identify $N(\Nv(G)\times\Nv(H))$
    with $N(\Nv(G\square H)) = \RC(G \square H)$.
    
    In fact, \(\Nv(G)\times\Nv(H)\) and \(\Nv(G\square H)\) are isomorphic as filtered simplicial sets. This is Proposition 1.4 in \cite{DIMZ}, but for clarity we give some details here. The isomorphism of simplicial sets
    $\Nv(G)\times\Nv(H) \cong \Nv(G\square H)$,
    given by
    \[((g_0,\ldots,g_k),(h_0,\ldots,h_k))
    \leftrightarrow ((g_0,h_0),\ldots,(g_k,h_k)),\]
    was established in the proof of Theorem~5.1
    of~\cite{HepworthRoff2023};
    we just need to show that the tuple
    $((g_0,h_0),\ldots,(g_k,h_k))$
    lies in filtration $p$ if and only if
    the same is true of the pair 
    $((g_0,\ldots,g_k),(h_0,\ldots,h_k))$.
    But this follows from the fact that
    \[
        \ell((g_0,h_0),\ldots,(g_k,h_k))
        =
        \ell(g_0,\ldots,g_k)+\ell(h_0,\ldots,h_k),
    \]
    which holds since, by \eqref{eq:box_ell1}, we have
    \begin{align*}
        \ell\left(
            (g_0, h_0), \ldots, (g_k, h_k) 
        \right) 
        &= 
        \sum_{m=0}^{k-1} 
        d_{G \square H} 
        ((g_m,h_m),(g_{m+1},h_{m+1})) 
        \\
        & = 
        \sum_{m=0}^{k-1} 
        (d_G(g_m, g_{m+1}) 
        + d_H(h_m, h_{m+1})) 
        \\
        &= 
        \sum_{m=0}^{k-1} d_G(g_m, g_{m+1}) 
        + 
        \sum_{m=0}^{k-1} d_H(h_m, h_{m+1}) 
        \\
        &= 
        \ell(g_0, \ldots, g_k) + \ell(h_0, \ldots, h_k).
    \end{align*}
    Thus we have an isomorphism of filtered chain complexes
    \[N(\Nv(G) \times \Nv(H)) \cong N(\Nv(G \square H)) = \RC(G \square H),\]
    and this, combined with \eqref{eq:filt_EZ1}, proves the theorem.
\end{proof}

From \Cref{thm:filt_EZ} we can derive an Eilenberg--Zilber-type theorem for the magnitude-path spectral sequence of the box product, as follows. In \Cref{sec:kunneth} we will use this result to prove K\"unneth theorems for each page of the MPSS.

\begin{theorem}[Eilenberg--Zilber for the MPSS]\label{thm:EZ_box_SS}
    For any directed graphs \(G\) and \(H\) there is a pairing of spectral sequences
    \[
        \nabla^\ast\colon 
        (\MPSS(G), \MPSS(H)) 
        \longrightarrow 
        \MPSS(G \square H)
    \]
    which is natural in $G$ and $H$, and for
    which $\nabla^0$ is a chain homotopy equivalence.
\end{theorem}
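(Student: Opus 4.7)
The plan is to combine \Cref{thm:filt_EZ} with \Cref{lem:helle}. From \Cref{thm:filt_EZ} we have the filtered chain map
\[
\nabla\colon \RC(G)\otimes \RC(H)\longrightarrow \RC(G\square H),
\]
natural in both arguments. Applying \Cref{lem:helle} directly to $\nabla$ produces a pairing $\nabla^\ast\colon (\MPSS(G),\MPSS(H))\to \MPSS(G\square H)$ satisfying $\nabla^0=\overline{\nabla}$. Naturality of $\nabla^\ast$ in $G$ and $H$ is inherited from that of $\nabla$, since the association $f\mapsto \phi$ in \Cref{lem:helle} is itself functorial.

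The remaining task is to check that $\nabla^0$ is a chain homotopy equivalence. By construction $\overline{\nabla}$ is the composite
\[
E^0(\RC(G))\otimes E^0(\RC(H)) \longrightarrow E^0(\RC(G)\otimes \RC(H)) \xrightarrow{E^0(\nabla)} E^0(\RC(G\square H))
\]
of the canonical comparison map with the map induced by $\nabla$ on associated gradeds. For the second arrow, \Cref{thm:filt_EZ} supplies a filtration-preserving chain homotopy inverse $\Delta$ together with filtration-preserving chain homotopies witnessing $\nabla\Delta\simeq \mathrm{Id}$ and $\Delta\nabla\simeq \mathrm{Id}$; these all descend to $E^0$, so $E^0(\nabla)$ is a chain homotopy equivalence with explicit inverse $E^0(\Delta)$. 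For the first arrow, the key observation is that every basis element of $\RC(G)$ (respectively $\RC(H)$) has a well-defined length, so the filtration quotient $F_\ell(\RC(G)\otimes\RC(H))/F_{\ell-1}(\RC(G)\otimes\RC(H))$ is freely spanned by the tensors $\sigma\otimes\tau$ with $\ell(\sigma)+\ell(\tau)=\ell$; hence the canonical comparison map is an isomorphism of bigraded chain complexes.

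The only real bookkeeping in this argument is the identification $E^0(A\otimes B)\cong E^0(A)\otimes E^0(B)$ for filtered complexes whose generators have exact filtration degree, and this is where I would expect to spend the most care; the substantive content has already been delivered by \Cref{thm:filt_EZ} and \Cref{lem:helle}, so the proof should be short.
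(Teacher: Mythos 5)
Your proposal is correct and follows essentially the same route as the paper: apply \Cref{thm:filt_EZ} to get the filtered map $\nabla$, feed it into \Cref{lem:helle} to get the pairing, and observe that the filtration-preserving inverse $\Delta$ and homotopy $\mathrm{SHI}$ descend to the filtration quotients. Your extra step identifying $E^0(\RC(G)\otimes\RC(H))$ with $E^0(\RC(G))\otimes E^0(\RC(H))$ via the well-defined length of basis elements is a detail the paper leaves implicit, and it is correct and worth making explicit.
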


\begin{proof}
    \Cref{thm:filt_EZ} gives us a map of filtered chain complexes
    \[\nabla\colon\RC(G)\otimes\RC(H)\to\RC(G\square H)\]
    natural in \(G\) and \(H\). From this, via~\Cref{lem:helle}, we obtain the
    pairing of spectral sequences
    \[
        \nabla^\ast\colon 
        (\MPSS(G), \MPSS(H)) 
        \longrightarrow 
        \MPSS(G \square H)
    \]
    in which the map $\nabla^0$ of the $E^0$-terms
    is the map of filtration quotients induced by $\nabla$. Since \(\nabla\)'s homotopy inverse \(\Delta\) and the chain homotopy \(\mathrm{SHI}\) are also filtration-preserving, they both descend to the filtration quotients, making \(\nabla^0\) a chain homotopy equivalence.
\end{proof}

\begin{remark}
\label{rmk:not_for_strong}
    Though the box product is sometimes referred to as the \demph{cartesian product} of directed graphs, it is not the categorical product in \(\DiGraph\). The categorical product of \(G\) and \(H\) is their \demph{strong product}: the directed graph \(G \squarediv H\) whose vertices are elements of \(V(G) \times V(H)\), with \(((g_1,h_1), (g_2,h_2)) \in E(G \squarediv H)\) if \(g_1 = g_2\) and \((h_1,h_2) \in E(H)\); or \(h_1 = h_2\) and \((g_1,g_2) \in E(G)\); or \((g_1,g_2) \in E(G)\) and \((h_1,h_2) \in E(H)\). The shortest path metric on \(G \squarediv H\) coincides with the \(\ell_\infty\)-metric on the product of the vertex sets:
    \[d_{G \squarediv H}((g_0,h_0),(g_1,h_1)) = \max \{d_G(g_0,g_1), d_H(h_0,h_1)\}\]
    for each \((g_0,h_0), (g_1,h_1) \in V(G) \times V(H)\).

    Just as in the case of the box product, there is an isomorphism of simplicial sets
    $\Nv(G)\times\Nv(H) \cong \Nv(G\squarediv H)$,
    given by
    \[((g_0,\ldots,g_k),(h_0,\ldots,h_k))
    \leftrightarrow ((g_0,h_0),\ldots,(g_k,h_k)).\]
    (This is part of Theorem~5.1 in~\cite{HepworthRoff2023}.) However, this is not an isomorphism of \emph{filtered} simplicial sets: while the function \(\Nv(G)\times\Nv(H) \to \Nv(G\squarediv H)\) is always filtration-preserving, its inverse usually is not.
\end{remark}

%%%%%%%%%%%%%%%%%%%%%%%%%%%%%%%%%%%%%%%%%%%%%%%%%%%%

\section{K\"unneth theorems}\label{sec:kunneth}

In general, one obtains a K\"unneth theorem
for some homology theory by combining an appropriate 
Eilenberg--Zilber theorem with the classic algebraic
K\"unneth theorem for chain complexes.
The Eilenberg--Zilber theorem usually establishes a chain homotopy
equivalence, so is in some sense as good as can be hoped.
However, relating the homology of a tensor product of chain complexes with the
tensor product of the homologies entails loss of information, as quantified by the 
relevant $\tor$ term in the algebraic K\"unneth theorem.

Our Eilenberg--Zilber theorem for the magnitude-path spectral sequence (\Cref{thm:EZ_box_SS}) is, in this setting, as good as can be hoped:
a pairing of spectral sequences that is a chain homotopy equivalence on the initial
term.
However, in order to access, say, the $E^2$-term $E^2(G\square H)$, 
we need to take homology twice,
and therefore potentially twice encounter the discrepancies 
expressed by the $\tor$ terms.
It may be the case that there is a general framework for encapsulating
and understanding these cascading errors in the setting of a 
spectral sequence, but in the present paper our approach
is to make assumptions in order to ensure that no such cascades arise.

Our strongest K\"unneth theorem holds under the assumption that $R$ is a field.
In this case, the pairing \(\nabla^\ast\) 
in \Cref{thm:EZ_box_SS} is a map of spectral sequences. Since \(\nabla^0\) is a quasi-isomorphism, it follows that \(\nabla^r\) is an isomorphism
\[\MPSS^r(G) \otimes \MPSS^r(H) \xto{\cong} \MPSS^r(G \square H)\]
for every \(r \geq 1\). This gives the following result.

\begin{theorem}[K\"unneth theorem for the MPSS over a field]
\label{thm:kunneth_MPSS_field}
Fix a ground ring \(R\) which is a field. Then for every pair of directed graphs \(G\) and \(H\) there is a map of spectral sequences
\begin{equation*}
\MPSS(G) \otimes \MPSS(H) \to \MPSS(G \square H)
\end{equation*}
natural in \(G\) and \(H\) and consisting of isomorphisms from \(\MPSS^1\) onwards. \qed
\end{theorem}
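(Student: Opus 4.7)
The plan is to package together the ingredients already assembled: the Eilenberg--Zilber pairing $\nabla^\ast\colon (E(G), E(H)) \to E(G \square H)$ from \Cref{thm:EZ_box_SS}, together with \Cref{cor:alg_kunneth}, the algebraic K\"unneth theorem over a field. The only substantive task is to propagate the fact that $\nabla^0$ is a chain homotopy equivalence up through the pages of the spectral sequence.

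First, I would observe that because $R$ is a field every $R$-module is flat, so \Cref{cor:alg_kunneth} ensures that $(E(G) \otimes E(H), d_\otimes)$ really is a spectral sequence and elevates $\nabla^\ast$ from a pairing to a bona fide map of spectral sequences. This identifies $\nabla^\ast$ as the natural candidate for the map in the theorem statement.

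Next, I would induct on $r \geq 0$ to prove that $\nabla^r$ is an isomorphism for all $r \geq 1$. The base case is supplied directly by \Cref{thm:EZ_box_SS}: $\nabla^0$ is a chain homotopy equivalence, hence in particular a quasi-isomorphism. For the inductive step, suppose $\nabla^r$ is a quasi-isomorphism of chain complexes. Then $H_\ast(\nabla^r)$ is an isomorphism, and by \Cref{cor:alg_kunneth} applied over the field $R$, the homology product
\[
\alpha\colon H_\ast(E^r(G)) \otimes H_\ast(E^r(H)) \longrightarrow H_\ast(E^r(G) \otimes E^r(H))
\]
is also an isomorphism. The commutative square from the definition of a pairing of spectral sequences then expresses $\nabla^{r+1}$ as the composite $H_\ast(\nabla^r) \circ \alpha$, up to the canonical identifications $E^{r+1} \cong H_\ast(E^r)$, and so $\nabla^{r+1}$ is an isomorphism. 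Since an isomorphism of chain complexes is in particular a quasi-isomorphism, the induction closes.

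There is essentially no hard step: all the difficulty has been absorbed into the proofs of \Cref{thm:EZ_box_SS} and \Cref{cor:alg_kunneth}. The decisive feature of the field hypothesis is that it eliminates the $\tor$ corrections alluded to in the opening of this section, so that the homology product is an isomorphism on every page, and the chain homotopy equivalence $\nabla^0$ propagates cleanly into an isomorphism on every subsequent page.
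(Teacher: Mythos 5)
Your proposal is correct and follows essentially the same route as the paper: over a field the pairing $\nabla^\ast$ of \Cref{thm:EZ_box_SS} is a map of spectral sequences, and since $\nabla^0$ is a chain homotopy equivalence, the identification of $\nabla^{r+1}$ with $H_\ast(\nabla^r)\circ\alpha$ (with $\alpha$ an isomorphism by \Cref{cor:alg_kunneth}) propagates the isomorphism through every page by induction. Nothing is missing.
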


In particular, \Cref{thm:kunneth_MPSS_field} gives us K\"unneth isomorphisms
for magnitude homology and bigraded path homology with 
coefficients in a field.
In the absence of the assumption that $R$ is a field,
we can still obtain K\"unneth theorems of the usual
form in magnitude homology and in the original
path homology, as we see in the next two results.

Specialized to undirected graphs, the following theorem recovers Theorem 5.3 of \cite{HepworthWillerton2017}. It is in turn a special case of the K\"unneth formula for the magnitude homology of generalized metric spaces \cite[Theorem 4.6]{Roff2023}, which extends that for classical metric spaces proved as Proposition 4.3 in \cite{BottinelliKaiser2021}.

\begin{theorem}[K\"unneth theorem for magnitude homology]
\label{thm:kunneth_box_MH}
Fix a ground ring \(R\) which is a P.I.D. For any directed graphs \(G\) and \(H\) there is short exact sequence
    \begin{equation}
    \label{eq:SES_MH}
    \begin{split}
    0 \to \bigoplus_{\substack{i+j=k \\a+b=\ell}} \MH_{i,a}(G) \otimes & \MH_{j,b}(H) \to \MH_{k,\ell}(G \square H) \\
    & \to \bigoplus_{\substack{i+j=k-1 \\a+b=\ell}} \mathrm{Tor}(\MH_{i,a}(G),\MH_{j,b}(H)) \to 0,
    \end{split}
    \end{equation}
natural in \(G\) and \(H\).
\end{theorem}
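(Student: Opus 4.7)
My plan is to deduce the short exact sequence \eqref{eq:SES_MH} by combining the filtered Eilenberg--Zilber theorem (\Cref{thm:filt_EZ}) with the algebraic K\"unneth theorem (\Cref{thm:alg_kunneth}), working length by length.

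First, I would pass the filtered chain homotopy equivalence of \Cref{thm:filt_EZ} to associated graded complexes. Since $\nabla$, its homotopy inverse $\Delta$, and the chain homotopy $\mathrm{SHI}$ are all filtration-preserving, they induce chain maps and a chain homotopy between the length-$\ell$ filtration quotients of $\RC(G)\otimes\RC(H)$ and $\RC(G\square H)$ for every $\ell$. The length-$\ell$ quotient on the target is by definition $\MC_{\ast,\ell}(G\square H)$, while on the source side, because each $F_s\RC(G)$ and $F_t\RC(H)$ is a free $R$-module spanned by a subset of the standard basis of tuples, a direct computation gives
\[
    \frac{F_\ell(\RC(G)\otimes\RC(H))}{F_{\ell-1}(\RC(G)\otimes\RC(H))}
    \;\cong\;
    \bigoplus_{a+b=\ell}\MC_{\ast,a}(G)\otimes\MC_{\ast,b}(H).
\]
Combining, we obtain a natural chain homotopy equivalence
\[
    \bigoplus_{a+b=\ell}\MC_{\ast,a}(G)\otimes\MC_{\ast,b}(H)
    \;\simeq\;
    \MC_{\ast,\ell}(G\square H)
\]
for each $\ell$.

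Second, I would apply the algebraic K\"unneth theorem to each tensor product on the left-hand side. Since $\MC_{\ast,a}(G)$ consists of free $R$-modules (with basis the length-$a$ tuples of vertices) and $R$ is a P.I.D., \Cref{thm:alg_kunneth} supplies for each decomposition $a+b=\ell$ a natural short exact sequence relating $H_k\bigl(\MC_{\ast,a}(G)\otimes\MC_{\ast,b}(H)\bigr)$ to $\MH(G)$, $\MH(H)$ and the relevant $\tor$-groups. Direct-summing these over $a+b=\ell$ and using the equivalence above to identify the middle term with $\MH_{k,\ell}(G\square H)$ produces \eqref{eq:SES_MH}, with naturality in $G$ and $H$ inherited from the naturality of $\nabla$ and of the algebraic K\"unneth sequence.

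The main technical point is the identification of the associated graded of the tensor-product filtration with $\bigoplus_{a+b=\ell}\MC_{\ast,a}(G)\otimes\MC_{\ast,b}(H)$; this relies on the freeness of the filtered pieces of $\RC(G)$ and $\RC(H)$, which ensures that $F_\ell/F_{\ell-1}$ distributes across the tensor product without Tor-obstructions at this stage. Once that identification is in hand, the theorem is a straightforward assembly of \Cref{thm:filt_EZ} and \Cref{thm:alg_kunneth}.
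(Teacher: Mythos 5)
Your proposal is correct and follows essentially the same route as the paper: the paper applies the algebraic K\"unneth theorem to $E^0(G)\otimes E^0(H)$ and then uses the chain homotopy equivalence $\nabla^0$ on $E^0$-pages (from \Cref{thm:EZ_box_SS}) to replace the middle term, which is exactly your argument with the $E^0$-page language unwound into magnitude chains. Your explicit identification of the associated graded of the filtered tensor product with $\bigoplus_{a+b=\ell}\MC_{\ast,a}(G)\otimes\MC_{\ast,b}(H)$, justified by the filtration being by spans of basis elements, is a point the paper leaves implicit in its definition of the tensor product of spectral sequences, and the remaining steps coincide.
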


\begin{proof}
Apply the algebraic K\"unneth theorem (\Cref{thm:alg_kunneth})
to the tensor product chain complex $E^0(G)\otimes E^0(H)$.
Then use the quasi-isomorphism 
$\nabla^0\colon E^0(G)\otimes E^0(H)
\to E^0(G\square H)$ to replace the middle term---the result is a short exact sequence relating
$E^1(G)$, $E^1(H)$ and $E^1(G\square H)$.
Now use the identification $E_{p,q}^1(-) = \MH_{p+q,p}(-)$ 
to replace these with $\MH(G)$, $\MH(H)$ and $\MH(G\square H)$, and re-index appropriately.
\end{proof}

Using \Cref{thm:kunneth_box_MH} we can recover two known K\"unneth formulae for ordinary path homology with respect to the box product: \cite[Theorem 4.7]{PH-kunneth} and \cite[Theorem 9.5]{IvanovPav2022}. (The latter applies to more general objects than graphs.)

First, we record a fact that will be useful here and later.

\begin{lemma}
\label{lem:diag_free}
    Fix a ground ring \(R\) which is a P.I.D. Let \(G\) be any directed graph. For every \(k \in \mathbb{N}\), the \(R\)-module \(\MH_{kk}(G)\) is freely generated.
\end{lemma}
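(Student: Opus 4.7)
The plan is to recognize $\MH_{k,k}(G)$ as the kernel of a map between free $R$-modules, and then invoke the classical fact that every submodule of a free module over a P.I.D.~is itself free.

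First I would observe that the magnitude chain complex in bidegree $(k,k)$ sits at the top of its length-$k$ column. Concretely, a generator $(x_0, \ldots, x_{k'})$ of $\MC_{k',k}(G)$ is a tuple with consecutive entries distinct and total length equal to $k$; since the triangle inequality forces $d(x_{i-1}, x_i) \geq 1$ for each $i$, we have $\ell(x_0, \ldots, x_{k'}) \geq k'$, and hence $\MC_{k',k}(G) = 0$ whenever $k' > k$. In particular there is nothing in homological degree $k+1$ of the chain complex $\MC_{\ast,k}(G)$, so
\[
\MH_{k,k}(G) = H_k(\MC_{\ast,k}(G)) = \ker\bigl(d\colon \MC_{k,k}(G) \to \MC_{k-1,k}(G)\bigr).
\]

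Next I would note that $\MC_{k,k}(G)$ is a free $R$-module, with basis the set of tuples $(x_0, \ldots, x_k)$ for which each consecutive pair $(x_{i-1}, x_i)$ is a directed edge of $G$ (these being precisely the tuples of total length $k$ in which consecutive entries are distinct). Since $R$ is a P.I.D., every submodule of a free $R$-module is free; applying this to the kernel of $d$ inside $\MC_{k,k}(G)$ yields the conclusion.

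There is really no obstacle here: the whole point is that $\MH_{k,k}$ lies on the edge of the magnitude chain bigrading, so the subtle interaction between the differential and length plays no role in the freeness argument. The only thing worth emphasising in writing up is the bound $\MC_{k',k}(G) = 0$ for $k' > k$, which is what lets us identify $\MH_{k,k}$ with a kernel rather than a quotient and thereby apply the structure theorem for submodules of free modules over a P.I.D.
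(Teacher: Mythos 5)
Your proof is correct and is essentially identical to the paper's: both identify $\MH_{k,k}(G)$ with $\ker(\partial_{k,k})$ using the vanishing of $\MC_{k+1,k}(G)$, and then apply the fact that a submodule of a free module over a P.I.D.\ is free. Your write-up just spells out the length bound $\ell \geq k$ that the paper leaves implicit.
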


\begin{proof}
    Since \(\MC_{k+1,k}(G) = 0\) for every \(k\), we have \(\MH_{kk}(G) = \ker(\partial_{kk})\), which is a submodule of the free \(R\)-module \(\MC_{kk}(G)\).
\end{proof}

\begin{theorem}[K\"unneth theorem for ordinary path homology]\label{thm:kunneth_box_OPH}
Fix a ground ring \(R\) which is a P.I.D. For any directed graphs \(G\) and \(H\) there is a short exact sequence 
    \begin{equation}
    \label{eq:SES_OPH}
    \begin{split}
    0 \to \bigoplus_{i+j=k} \OPH_{i}(G) \otimes & \OPH_{j}(H) \to \OPH_{k}(G \square H) \\
    & \to \bigoplus_{i+j=k} \mathrm{Tor}(\OPH_{i}(G),\OPH_{j-1}(H)) \to 0,
    \end{split}
    \end{equation}
    natural in \(G\) and \(H\).
\end{theorem}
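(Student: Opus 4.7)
The plan is to derive this from the algebraic K\"unneth theorem (\Cref{thm:alg_kunneth}) applied to a ``diagonal'' chain complex built out of magnitude homology. For a graph $G$, write $D(G)$ for the chain complex with $D_k(G) = \MH_{k,k}(G)$ and differential $d^1\colon \MH_{k,k}(G)\to \MH_{k-1,k-1}(G)$; in other words, $D(G)$ is the horizontal axis of the $E^1$-page of the MPSS of $G$. Since $\PH_k(G) = E^2_{k,0}(G)$ by definition, $\PH_k(G) = H_k(D(G))$.

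The heart of the argument is a chain-level isomorphism $D(G\square H)\cong D(G)\otimes D(H)$. To obtain it, I would specialize the magnitude-homology K\"unneth sequence of \Cref{thm:kunneth_box_MH} to $\ell=k$. Because $\MH_{i,a}(-)$ vanishes unless $0\leq i\leq a$ (as recalled in \Cref{subsection-E0}), the conditions $i+j=k$, $a+b=k$, $a\geq i$, $b\geq j$ in the tensor summand force $a=i$ and $b=j$, collapsing it to $\bigoplus_{i+j=k}\MH_{i,i}(G)\otimes \MH_{j,j}(H)$. In the Tor summand the same vanishing together with $i+j=k-1$, $a+b=k$, $a\geq i$, $b\geq j$ force either $a=i+1,b=j$ or $a=i,b=j+1$, so every Tor term has $\MH_{i,i}(G)$ or $\MH_{j,j}(H)$ as one of its arguments. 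By \Cref{lem:diag_free} these are free $R$-modules, so all Tor terms vanish. Thus \Cref{thm:kunneth_box_MH} supplies a natural isomorphism
$$\bigoplus_{i+j=k}\MH_{i,i}(G)\otimes \MH_{j,j}(H) \xrightarrow{\ \cong\ } \MH_{k,k}(G\square H)$$
induced by $\nabla^1$. Since $\nabla^\ast$ is a pairing of spectral sequences (\Cref{thm:EZ_box_SS}), the map $\nabla^1$ is a chain map with respect to $d^1$; restricting to horizontal axes on both sides then upgrades this to an isomorphism of chain complexes $D(G)\otimes D(H)\cong D(G\square H)$.

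With the chain-level identification in hand, I would apply \Cref{thm:alg_kunneth} to $D(G)\otimes D(H)$. The flatness hypothesis is supplied by \Cref{lem:diag_free}, since $D(G)$ is levelwise free over the PID~$R$. The resulting short exact sequence has outer terms $\bigoplus_{i+j=k}\PH_i(G)\otimes\PH_j(H)$ and $\bigoplus_{i+j=k-1}\mathrm{Tor}(\PH_i(G),\PH_j(H))$, and middle term $H_k(D(G)\otimes D(H))\cong H_k(D(G\square H)) = \PH_k(G\square H)$. Reindexing $j\mapsto j-1$ in the Tor sum gives the sequence~\eqref{eq:SES_OPH}, with naturality inherited from the naturality of both \Cref{thm:kunneth_box_MH} and \Cref{thm:alg_kunneth}.

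The main obstacle I anticipate is cleanly verifying that the bidegree-wise isomorphism induced by $\nabla^1$ assembles into a chain map for the \emph{tensor-product} differential on $D(G)\otimes D(H)$, with correct Koszul signs. This should follow directly from the pairing axioms of \Cref{thm:EZ_box_SS} together with the observation that the horizontal axis is a $d^1$-subcomplex of the $E^1$-page, but it requires careful translation between the $(i,j)$ spectral-sequence bigrading used in the definition of a pairing and the $(k,\ell)$ magnitude bigrading appearing in \Cref{thm:kunneth_box_MH}. Everything else is routine homological bookkeeping.
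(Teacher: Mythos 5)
Your proposal is correct and follows essentially the same route as the paper: both apply the algebraic K\"unneth theorem to the horizontal boundary row of the $E^1$-page (your complex $D(-)$), and both identify the middle term with $\PH_{k}(G\square H)$ by specializing the magnitude-homology K\"unneth sequence to $\ell = k$ and using \Cref{lem:diag_free} to kill the Tor term. The chain-map compatibility of $\nabla^1$ that you flag as the remaining check is exactly what the paper relies on (implicitly, via the pairing structure of \Cref{thm:EZ_box_SS}) when it writes $H_k\bigl(E^1_{\ast 0}(G)\otimes E^1_{\ast 0}(H)\bigr)\cong H_k\bigl(\MH_{\ast\ast}(G\square H)\bigr)$.
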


\begin{proof}
    Recall from \Cref{subsection:E2}
    that the ordinary path homology of a directed graph is the diagonal of its bigraded path homology or, equivalently, the horizontal boundary row on page \(E^2\) of the magnitude-path spectral sequence:
    \[\OPH_i(G) = \PH_{i,i}(G) = E^2_{i,0}(G).\]
    Applying the algebraic K\"unneth theorem in the horizontal boundary row of page \(E^1\) gives the short exact sequence
    \begin{equation*}
    \begin{split}
    0 \to \bigoplus_{i+j = k} \PH_{ii}(G) \otimes & \PH_{jj}(H) \to H_k \left( E_{\ast 0}^1(G) \otimes E_{\ast 0}^1(H) \right) \\
    & \to \bigoplus_{i+j=k} \mathrm{Tor}(\PH_{ii}(G),\PH_{j-1,j-1}(H)) \to 0;
    \end{split}
    \end{equation*}
    the claim is that the middle term is isomorphic to \(\PH_{kk}(G \square H)\). To see this, we can use the K\"unneth formula for magnitude homology.

    Consider the K\"unneth sequence for magnitude
    homology~\eqref{eq:SES_MH} in the case $k=\ell$.
    Since $\MH_{pq}(-)$ vanishes for $p>q$, 
    the first term
    $\bigoplus \MH_{ia}(G) \otimes \MH_{jb}(H)$
    reduces in this case to just the part involving diagonal terms, 
    i.e.~those terms where $a=i$ and $b=j$.
    For the same reason, the third term
    $\bigoplus
    \mathrm{Tor}(\MH_{ia}(G),\MH_{jb}(H))$
    reduces to just those terms in which $a=i$ and $b=j-1$,
    or $a=i-1$ and $b=j$.
    Thus, in all cases the $\tor$ term features 
    a diagonal group $\MH_{ii}(G)$ or $\MH_{jj}(H)$ 
    as one of its arguments.
    Since the diagonal magnitude homology modules are always free (\Cref{lem:diag_free}), it follows that the third term vanishes in this case,
    yielding for every $k$ an isomorphism
    \begin{equation*}
    \bigoplus_{i+j=k} \MH_{ii}(G) \otimes \MH_{jj}(H) 
    \xrightarrow[\cong]{\ \nabla^1\ }
    \MH_{kk}(G \square H).
    \end{equation*}
    Since \(\MH_{ii}(G) \otimes \MH_{jj}(H) = E^1_{i0}(G) \otimes E^1_{j0}(H)\), taking homology gives
    \[H_k \left( E^1_{\ast 0}(G) \otimes E^1_{\ast 0}(H) \right) \cong H_k \left( \MH_{\ast \ast} (G \square H) \right) = \PH_{kk}(G \square H),\]
    as claimed.
\end{proof}

Looking at the entire second page of the magnitude-path spectral sequence yields a K\"unneth formula for bigraded path homology.
However, to access this we need to make a flatness
assumption.

\begin{theorem}[K\"unneth theorem for bigraded path homology]
\label{thm:kunneth_box_bigrad}
Fix a ground ring \(R\) which is a P.I.D., and let \(G\) be a directed graph with flat magnitude homology. Then for any directed graph \(H\) there is a short exact sequence
    \begin{equation}
    \label{eq:SES_PH}
    \begin{split}
    0 \to \bigoplus_{\substack{i+j=k \\a+b=\ell}}  \PH_{i,a}(G) \otimes &\PH_{j,b}(H) \to \PH_{k, \ell}(G \square H) \\
    & \to \bigoplus_{\substack{i+j=k \\a+b=\ell}} \mathrm{Tor}(\PH_{i,a}(G),\PH_{j-1,b-1}(H)) \to 0,
    \end{split}
    \end{equation}
natural in \(H\) and with respect to maps \(G \to G'\) where \(G'\) also has flat magnitude homology. If \(R\) is a field then for \emph{every} pair of directed graphs \(G\) and \(H\) there is an isomorphism
    \begin{equation}
    \label{eq:kunneth_iso_PH}
    \PH_{k, \ell}(G \square H) \cong \bigoplus_{\substack{i+j=k \\a+b=\ell}} \PH_{i,a}(G) \otimes \PH_{j,b}(H),
    \end{equation}
natural in \(G\) and \(H\).
\end{theorem}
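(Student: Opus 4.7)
The plan is to extract the bigraded statement by combining the K\"unneth formula for magnitude homology (\Cref{thm:kunneth_box_MH}) with a second application of the algebraic K\"unneth theorem on the $E^1$-page. The flatness hypothesis on $\MH(G)$ plays two distinct roles: it causes the Tor terms in the magnitude K\"unneth sequence to vanish, and it allows the algebraic K\"unneth theorem to be applied once more to the chain complex $E^1(G)$ itself.

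First, under the flatness assumption, all Tor terms in \eqref{eq:SES_MH} vanish, so $\nabla^1 \colon E^1(G)\otimes E^1(H) \to E^1(G\square H)$ is an isomorphism of bigraded $R$-modules. By the pairing property (\Cref{lem:helle}), $\nabla^1$ is a chain map with respect to the differentials $d^1_\otimes$ and $d^1$; hence it is an isomorphism of bigraded chain complexes, and taking homology gives
\[
H\bigl(E^1(G)\otimes E^1(H),\, d^1_\otimes\bigr) \;\xrightarrow{\;\cong\;}\; H\bigl(E^1(G\square H),\, d^1\bigr) \;=\; E^2(G\square H).
\]

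Next, since $d^1$ has bidegree $(-1,0)$, for each $t$ the column $E^1_{\ast, t}(G)$ is an ordinary chain complex, and by hypothesis it is flat. So the algebraic K\"unneth theorem (\Cref{thm:alg_kunneth}) applies to $E^1_{\ast,t}(G)\otimes E^1_{\ast, v}(H)$ for each pair $t,v$. Summing over $t+v = q$ and substituting the isomorphism from the previous step yields, in each bidegree $(p,q)$, a natural short exact sequence
\[
0 \to \!\!\bigoplus_{\substack{s+u=p\\ t+v=q}}\!\! E^2_{s,t}(G) \otimes E^2_{u,v}(H) \to E^2_{p,q}(G\square H) \to \!\!\bigoplus_{\substack{s+u=p-1\\ t+v=q}}\!\! \tor\bigl(E^2_{s,t}(G), E^2_{u,v}(H)\bigr) \to 0.
\]
Rewriting via $E^2_{s,t}(-) = \PH_{s+t, s}(-)$---setting $i=s+t$, $a=s$, $j=u+v$, $b=u$ in the left-hand sum, and $i=s+t$, $a=s$, $j=u+v+1$, $b=u+1$ in the Tor term---recovers the stated sequence \eqref{eq:SES_PH}. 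Naturality is inherited from that of both $\nabla^{\ast}$ and the algebraic K\"unneth theorem. Over a field every module is flat and every Tor vanishes, yielding the isomorphism \eqref{eq:kunneth_iso_PH}.

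The only real subtlety is that the K\"unneth sequence for $\MH$ is a priori a statement about bigraded modules rather than chain complexes; the crucial observation is that once flatness forces the Tor term to vanish, the surviving map $\nabla^1$ is automatically compatible with the $d^1$-structure because it is, by construction, a map of pages of a pairing. After that, the rest of the argument is essentially bookkeeping in the bigrading.
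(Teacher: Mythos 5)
Your proof is correct and follows essentially the same route as the paper's: use flatness to kill the Tor term in the magnitude-homology K\"unneth sequence so that $\nabla^1$ becomes an isomorphism of $d^1$-chain complexes, then apply the algebraic K\"unneth theorem along the rows of $E^1(G)\otimes E^1(H)$ and reindex. (Minor quibble: what you call the ``column'' $E^1_{\ast,t}$ is a row in the paper's conventions, since $d^1$ has bidegree $(-1,0)$; the mathematics is unaffected.)
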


\begin{proof}
    Since \(G\) has flat magnitude homology---meaning that for every \(p,q\) the \(R\)-module \(\MPSS_{pq}^1(G)=\MH_{p+q,p}(G)\) is flat---we can apply the algebraic K\"unneth theorem in each row of \(\MPSS^{1}(G) \otimes \MPSS^{1}(H)\) to obtain, for each \(p\) and \(q\), a short exact sequence
    \begin{equation}
    \label{eq:SES_PH_partial}
    \begin{split}
    0 \to \left(\MPSS^{2}(G) \otimes \MPSS^{2}(H)\right)_{pq} & \xto{} H_p \left( (\MPSS^{1}(G) \otimes \MPSS^{1}(H) )_{\ast q} \right) \\
    &\to \bigoplus_{\substack{m+u=p \\ n+v = q}} \mathrm{Tor}(\MPSS^{2}_{mn}(G),\MPSS^{2}_{u-1,v}(H)) \to 0,
    \end{split}
    \end{equation}
    which has the claimed naturality in \(G\) and \(H\). 
    The assumption that \(G\) has flat magnitude homology ensures that in the statement of \Cref{thm:kunneth_box_MH} the \(\tor\) term vanishes, 
    so that using the identification  \(\MPSS_{pq}^1(-) = \MH_{p+q,p}(-)\)
    we obtain a natural isomorphism
    \[
    (\MPSS^{1}(G) \otimes \MPSS^{1}(H) )_{p,q}
    =
    (\MH(G) \otimes \MH(H))_{p+q,p} \cong \MH(G \square H)_{p+q,p}
    =\MPSS^1(G\square H)_{p,q}.
    \]
    Using this isomorphism we may replace the middle term of~\eqref{eq:SES_PH_partial} with $H_p(\MPSS^1(G\square H)_{\ast,q})
    =\MPSS^2_{p,q}(G\square H)$.
    Using the identification \(\MPSS_{pq}^2(-) = \PH_{p+q,p}(-)\) and reindexing appropriately, this yields the required short exact sequence \eqref{eq:SES_PH}.

    Over a field, the flatness assumption is guaranteed to be satisfied and the torsion term in \eqref{eq:SES_PH} vanishes, yielding the isomorphism \eqref{eq:kunneth_iso_PH}.
\end{proof}

Our final K\"unneth formula holds throughout the magnitude-path spectral sequence, though only under more restrictive flatness assumptions.

\begin{theorem}[K\"unneth theorem for the MPSS over a P.I.D.]
\label{thm:kunneth_box_SS}
    Fix a ground ring \(R\) which is a P.I.D. 
    Let $G$ and $H$ be directed graphs, and suppose that there is $s\geq 1$ such that,
    for each \(0 \leq r < s\), 
    each term of \(\MPSS^r(G)\) is flat. Then for \(1 \leq r < s\) the map \(\nabla^r\) is an isomorphism
    \[
        \bigoplus_{\substack{m+u=p \\ n+v = q}} 
        \MPSS^{r}_{mn}(G) \otimes \MPSS^{r}_{uv}(H) 
        \xto{\ \cong\ } 
        \MPSS^r_{pq}(G \square H),
    \]
    while \(\nabla^{s}\) fits into a short exact sequence
    \begin{align*}
    0 \to \bigoplus_{\substack{m+u=p \\ n+v = q}} \MPSS^{s}_{mn}(G) \otimes & \MPSS^{s}_{uv}(H) \xto{\nabla^{s}} \MPSS^{s}_{pq}(G \square H) \\
    &\to \bigoplus_{\substack{m+u=p \\ n+v = q}} \mathrm{Tor}(\MPSS^{s}_{{mn}}(G),\MPSS^{s}_{{u-s+1,v+s-2}}(H)) \to 0
    \end{align*}
    which is natural in \(H\) and with respect to maps \(G \to G'\) for $G'$ satisfying the same flatness property as \(G\).
\end{theorem}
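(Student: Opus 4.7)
The plan is to prove both parts of the theorem by induction on $r$, using the defining property of a pairing of spectral sequences to reduce everything to a K\"unneth argument on the preceding page. By the compatibility of the pairing $\nabla^\ast$ of \Cref{thm:EZ_box_SS} with the passage from page $r-1$ to page $r$, we have for every $r\geq 1$ a factorisation
\begin{equation*}
    \nabla^r \colon
    E^r(G)\otimes E^r(H)
    \xrightarrow{\ \alpha\ }
    H\bigl(E^{r-1}(G)\otimes E^{r-1}(H)\bigr)
    \xrightarrow{\ H(\nabla^{r-1})\ }
    E^r(G\square H),
\end{equation*}
in which $\alpha$ is the homology product and the domain is identified with $H(E^{r-1}(G))\otimes H(E^{r-1}(H))$. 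The second arrow is an isomorphism: for $r=1$ because $\nabla^0$ is a chain homotopy equivalence (\Cref{thm:EZ_box_SS}), and for $r\geq 2$ because $\nabla^{r-1}$ is an isomorphism by the inductive hypothesis. Hence the entire content of the theorem is concentrated in the behaviour of $\alpha$.

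To apply the algebraic K\"unneth theorem in a bidegree-sensitive way, I would decompose $E^{r-1}(G)$ into a direct sum of chain subcomplexes along lines invariant under $d^{r-1}$. Since $d^{r-1}$ has bidegree $(-(r-1),r-2)$, it preserves the linear combination $(r-2)p+(r-1)q$, so for each $\ell$ the span $L^{r-1}_\ell(G)\subseteq E^{r-1}(G)$ of those bidegrees with $(r-2)p+(r-1)q = \ell$ is a chain subcomplex, naturally graded by the total degree $n=p+q$, whose homology in degree $n$ recovers $E^r_{p,q}(G)$ for the unique $(p,q)$ determined by $(n,\ell)$. The hypothesis ensures that $E^{r-1}(G)$ is flat---either automatically for $r-1=0$, because magnitude chains are free, or by assumption for $1\leq r-1<s$---so \Cref{thm:alg_kunneth} applies to each pair $L^{r-1}_\ell(G)\otimes L^{r-1}_\mu(H)$. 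Reassembling the resulting sequences over $(\ell,\mu)$ compatible with a fixed target bidegree $(p,q)$ yields a natural short exact sequence
\begin{equation*}
    0 \to \bigoplus_{\substack{m+u=p\\ n+v=q}}
    E^r_{m,n}(G)\otimes E^r_{u,v}(H)
    \xrightarrow{\alpha}
    H\bigl(E^{r-1}(G)\otimes E^{r-1}(H)\bigr)_{p,q}
    \to \bigoplus_{\substack{m+u=p\\ n+v=q}}
    \mathrm{Tor}\bigl(E^r_{m,n}(G),\, E^r_{u-(r-1),\,v+(r-2)}(H)\bigr) \to 0,
\end{equation*}
in which the bidegree shift in the Tor term is precisely that of $d^{r-1}$, reflecting the degree-$1$ shift in K\"unneth.

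Composing this sequence with the isomorphism $H(\nabla^{r-1})$ replaces the middle term by $E^r_{p,q}(G\square H)$ and yields the analogous sequence with $\nabla^r$ in place of $\alpha$. When $1\leq r<s$, the flatness of $E^r(G)$ (by hypothesis) kills the Tor term, so $\nabla^r$ is an isomorphism and the induction propagates; when $r=s$, substituting $r=s$ into the shift recovers exactly the sequence in the statement. Naturality in $H$, and in maps $G\to G'$ where $G'$ satisfies the same flatness hypothesis, follows from the corresponding naturality in \Cref{thm:alg_kunneth}. The main technical obstacle is the bookkeeping in the line-decomposition: one must check carefully that the chain-degree shift by $1$ in K\"unneth translates, under the chosen parameterisation of $L^{r-1}_\ell$ by total degree, into precisely the bidegree shift $(u,v)\mapsto (u-s+1,\,v+s-2)$ claimed in the theorem.
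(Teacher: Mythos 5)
Your proposal is correct and follows essentially the same route as the paper: factor $\nabla^r$ as $H(\nabla^{r-1})\circ\alpha$ via the pairing property, induct so that $H(\nabla^{r-1})$ is an isomorphism, and apply the algebraic K\"unneth theorem to the chain complexes lying along the lines of $E^{r-1}$ preserved by $d^{r-1}$, with the flatness hypothesis killing the Tor term for $r<s$. Your explicit identification of the invariant functional $(r-2)p+(r-1)q$ and the resulting bidegree shift $(u-s+1,v+s-2)$ checks out; it simply spells out the line decomposition that the paper invokes more briefly.
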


\begin{proof}
    We prove this by induction on \(s\geq 1\). The case \(s=1\) is just the K\"unneth theorem for magnitude homology (\Cref{thm:kunneth_box_MH}); the case \(s=2\) follows from that theorem and the K\"unneth theorem for bigraded path homology (\Cref{thm:kunneth_box_bigrad}). Now we take \(s > 2\) and assume the statement holds for all \(r < s\).
    
    For \(1 \leq r < s\), the map on page \(r\) is the composite
    \[
        \begin{tikzcd}
        [
        ar symbol/.style = {draw=none,"\textstyle#1" description,sloped},
        isomorphic/.style = {ar symbol={\cong}},
        ]
        \MPSS^r(G) \otimes \MPSS^r(H) \arrow[swap]{d}{\cong} \arrow{r}{\nabla^r} & \MPSS^r(G \square H)\\
        H(\MPSS^{r-1}(G)) \otimes H(\MPSS^{r-1}(H)) \arrow[swap]{d}{\alpha} \\
        H(\MPSS^{r-1}(G) \otimes \MPSS^{r-1}(H)) \arrow[swap]{r}{H(\nabla^{r-1})} & H(\MPSS^{r-1}(G \square H)) 
        \arrow[swap]{uu}{\cong}
        \end{tikzcd}
    \]
    where \(\alpha\) is the homology product. By the assumptions of the theorem, \(\MPSS^{r-1}(G)\) and \(\MPSS^r(G)\) both consist of flat \(R\)-modules, so the algebraic K\"unneth theorem tells us that \(\alpha\) has the claimed naturality and is an isomorphism. By the inductive assumption, the map \(H(\nabla^{r-1})\) is an isomorphism and has the claimed naturality too. It follows that the same holds for \(\nabla^r\).
    
    Applying the algebraic K\"unneth theorem on page \(E^{s-1}\), to the chain complex lying along each line of slope \(-(s-1) / (s-2)\), yields for each \(p,q\) a short exact sequence
    \begin{align*}
    0 \to \left( \MPSS^{s}(G) \otimes \MPSS^{s} (H)  \right)_{pq} & \xto{\alpha} H \left( (\MPSS^{s-1} (G) \otimes \MPSS^{s-1} (H) )_{pq} \right) \\
    &\to \bigoplus_{\substack{m+u=p \\ n+v = q}} \mathrm{Tor}(\MPSS^{s}_{mn}(G),\MPSS^{s}_{{u-s+1,v+s-2}}(H)) \to 0,
    \end{align*}
    which has the claimed naturality. We may use the isomorphism \(H(\nabla^{s-1})\) to replace the middle term by \(\MPSS_{p,q}^{s}(G \square H)\), obtaining the short exact sequence
    \begin{align*}
    0 \to \left( \MPSS^{s}(G) \otimes \MPSS^{s} (H)  \right)_{pq} & \xto{H(\nabla^{s-1}) \circ \alpha} \MPSS_{pq}^{s}(G \square H) \\
    &\to \bigoplus_{\substack{m+u=p \\ n+v = q}} \mathrm{Tor}(\MPSS^{s}_{{mn}}(G),\MPSS^{s}_{{u-s+1,v+s-2}}(H)) \to 0.
    \end{align*}
    Since \(\nabla^s = H(\nabla^{s-1}) \circ \alpha\), this completes the proof.
\end{proof}

%%%%%%%%%%%%%%%%%%%%%%%%%%%%%%%%%%%%%%%%%%%%%%%%%%%%

\section{Excision and Mayer--Vietoris theorems}\label{sec:BPH_MV}

This section contains three key results. We will prove an excision theorem which holds for every page of the magnitude-path spectral sequence from $\MPSS^1$ onwards (\Cref{excision-all}). From the excision theorem we are able to derive a Mayer--Vietoris theorem for magnitude homology (\Cref{Mayer--Vietoris-magnitude}) and for bigraded path homology (\Cref{Mayer--Vietoris-path}).

It is well known from the literature on 
both magnitude homology and path homology
that Mayer--Vietoris theorems do not hold for arbitrary unions of 
graphs.
Rather, one needs to assume that the union in question
is `nice' in some appropriate sense.
Indeed, here we follow~\cite{CDKOSW} in considering 
pushouts (rather than unions) along a class of subgraph inclusions termed \emph{cofibrations}. (This terminology will be justified in \Cref{sec:cofibration category}.)
Thus, we begin in~\Cref{sec:cofibrations} with a recollection
on cofibrations.
Then in~\Cref{sec:excision-Mayer--Vietoris} we state our main results,
leaving the lengthy proof of~\Cref{excision-all} 
to \Cref{sec:proof-excision}.

\subsection{Cofibrations}\label{sec:cofibrations}
%NEW
The maps we call `cofibrations' are essentially the same as those defined in \cite[Definition 3.9]{CDKOSW}, except that we have reversed the directions of edges. This superficial modification and the reasons for it are explained in \Cref{rmk:cofib_reverse}. The definition runs as follows.
%OLD
%The maps we call `cofibrations' are essentially the same as those defined in \cite[Definition 2.8]{CDKOSW}, except that we have reversed the directions of edges. This superficial modification and the reasons for it are explained in \Cref{rmk:cofib_reverse}. The definition runs as follows.

\begin{definition}
\label{def:reach}
    Let \(X\) be a directed graph, and \(A \subseteq X\) a subgraph. The \demph{reach} of \(A\), denoted \(rA\), is the induced subgraph of \(X\) on the set of all vertices that admit a path from some vertex in \(A\).
\end{definition}

%NEW
\begin{definition}[{\cite[Definition~3.9]{CDKOSW}}]
%OLD
%\begin{definition}[{\cite[Definition~2.8]{CDKOSW}}]
\label{def:cofib}
    A \emph{cofibration} of directed graphs is an induced subgraph
    inclusion $A\hookrightarrow X$ for which:
    \begin{enumerate}
        \item \label{cond:cofib1}
        There are no edges from vertices not in $A$
        to vertices in $A$.
        \item \label{cond:cofib2}
        For each $x\in rA$ there is a vertex $\pi(x)\in A$
        with the property that
        \[
            d(a,x)=d(a,\pi(x))+ d(\pi(x),x)
            \qquad
            \text{for every }a\in A.
        \]
    \end{enumerate}
\end{definition}

Before stating the theorems, we make a few remarks on the definition. First, note that condition \eqref{cond:cofib1} is equivalent to saying that there are no paths in \(X\) from vertices outside \(A\) to vertices inside \(A\). This guarantees, in particular, that \(A\) is a convex subgraph of \(X\). Condition \eqref{cond:cofib2} says that \(X\) \demph{projects} to \(A\) in the sense of Leinster \cite[Definition 4.6]{LeinsterGraph} and its precursor (also Leinster) \cite[Definition~2.3.1]{LeinsterMetric2013}. As noted in \cite{LeinsterGraph}, the vertex \(\pi(x)\) is the vertex of \(A\) closest to \(x\), and this determines \(\pi(x)\) uniquely. In particular, taking \(x = a\) in condition \eqref{cond:cofib2} we see that \(\pi(a) = a\) for every \(a \in V(A)\). Thus, we have a \demph{projection function} \(\pi\colon V(rA) \to V(A)\). In general, though, \(\pi\) does not determine a map of graphs.

%NEW
We are concerned in this section with the behaviour of cofibrations, and the homology of pushouts along cofibrations. The category \(\DiGraph\) is cocomplete, so in particular it has all pushouts; this is explained in~\cite{CDKOSW} before Lemma~2.10. Moreover, it is shown in \cite[Proposition~3.15]{CDKOSW} 
that the class of cofibrations is closed under pushout.
%OLD
%We are concerned in this section with the behaviour of cofibrations, and the homology of pushouts along cofibrations. The category \(\DiGraph\) is cocomplete, so in particular it has all pushouts; this is explained in~\cite{CDKOSW} before Lemma~1.10 (and in this paper after \Cref{lem:DG_NMet_Met}). Moreover, it is shown in \cite[Proposition~2.13]{CDKOSW} 
%that the class of cofibrations is closed under pushout.
That is,
given a cofibration $i\colon A\to X$ and an arbitrary map of directed graphs 
$f\colon A\to Y$, the map $j$ in the pushout diagram
\begin{equation}\label{pushout}
\begin{tikzcd}
    A \arrow{r}{f} \arrow[hook,d,"i"'] & Y \arrow{d}{j} \\
    X \arrow[r,"g"'] & X \cup_A Y
\end{tikzcd}
\end{equation}
is also a cofibration.

\begin{example}[A cone]\label{cone-cofibration}
    Let \(I\) denote the following directed graph.
    \[
    \begin{tikzcd}
        -1 & 0 & +1 \\ [-20]
        \bullet\arrow{r} & \bullet & \bullet \arrow{l}
    \end{tikzcd}
    \]
    Given an arbitrary directed graph $X$, we define $CX$ to be the
    directed graph obtained from $X\square I$ by identifying the 
    induced subgraph $X\square\{+1\}$ to a single vertex
    that we denote simply $+1$.
    We think of $CX$ as a form of `cone' on $X$.
    We now identify $X$ with the induced subgraph of $CX$ on the
    vertices of form $(x,-1)$ for $x$ a vertex of $X$,
    so that we obtain the induced subgraph inclusion
    \[
        X\lhook\joinrel\longrightarrow CX.
    \]
    This is a cofibration.
    To see this, we verify the two properties of~\Cref{def:cofib}:
    \begin{enumerate}
        \item
        Since there is no edge of $I$ from $0$ to $-1$, 
        there are no edges of $CX$ from $CX\setminus X$ to $X$.
        \item
        The only paths in $I$ that begin at $-1$ are the trivial path
        and the edge $-1\to 0$,
        so that the reach $rX$ of $X$ is the induced subgraph on
        the vertices of the form $(x,0)$ and $(x,-1)$
        for $x$ a vertex of $X$.
        We then define $\pi$ on such vertices by
        \[
            \pi(x,-1) = \pi(x,0) = (x,-1).
        \]
        The required property of $\pi$ then states that,
        for vertices $x,y$ of $X$, and for $j=-1,0$,
        \[
            d((x,-1),(y,j))=d((x,-1),\pi(y,j))+d(\pi(y,j),(y,j))
        \]
        or in other words
        \[
            d((x,-1),(y,j))=d((x,-1),(y,-1))+d((y,-1),(y,j)),
        \]
        and this is immediately verified; see~\Cref{eq:box_ell1}.
    \end{enumerate}
    Note also that $CX$ is 1-contractible: the 
    identity map $CX\to CX$ is $1$-homotopic to the constant map
    with value $+1$.
    Indeed, if we denote by $d\colon CX\to CX$ the map defined
    by $d(+1)=+1$ and $d(x,j) = (x,\max\{j,0\})$ 
    for $x\in X$ and $j=0,-1$, 
    then one can check that there are $1$-homotopies 
    from both the identity map, and the constant map, to $d$.
    This example will be used later in~\Cref{cor:suspension}.
\end{example}

\begin{remark}\label{rmk:cofib_reverse}
%NEW
    As the observations after \Cref{def:cofib} suggest, the definition of cofibrations in \cite{CDKOSW} is an adaptation, to the directed setting, of the notion of \demph{projecting decomposition} appearing in the literature on the magnitude and magnitude homology of undirected graphs and metric spaces \cite{LeinsterMetric2013, LeinsterGraph, HepworthWillerton2017}. Our \Cref{def:cofib} is taken directly from Definition~3.9 of~\cite{CDKOSW}, except that we have reversed the directionality: \(A \hookrightarrow X\) is a cofibration in the sense of this paper if and only if the corresponding map between the transpose graphs---in which the direction of every edge has been reversed---is a cofibration in the sense of \cite{CDKOSW}.
%OLD
%    As the observations after \Cref{def:cofib} suggest, the definition of cofibrations in \cite{CDKOSW} is an adaptation, to the directed setting, of the notion of \demph{projecting decomposition} appearing in the literature on the magnitude and magnitude homology of undirected graphs and metric spaces \cite{LeinsterMetric2013, LeinsterGraph, HepworthWillerton2017}. Our \Cref{def:cofib} is taken directly from Definition~2.8 of~\cite{CDKOSW}, except that we have reversed the directionality: \(A \hookrightarrow X\) is a cofibration in the sense of this paper if and only if the corresponding map between the transpose graphs---in which the direction of every edge has been reversed---is a cofibration in the sense of \cite{CDKOSW}.

%NEW
    This superficial change means that \Cref{def:cofib} can also be seen as a strengthening, suited to this paper's filtered techniques, of the notion of \demph{long cofibration} of directed graphs given in \cite[Definition 6.2]{HepworthRoff2023}: every cofibration in the sense of this paper is also a long cofibration. As is shown in \cite[Proposition 6.6]{HepworthRoff2023}, a long cofibration \(A \hookrightarrow X\) is precisely a map of directed graphs that induces a \demph{Dwyer morphism} between the preorders generated by \(A\) and \(X\). The class of Dwyer morphisms contains all cofibrations in the Thomason model structure on the category of small categories, which is Quillen equivalent to the classical model structure on the category of simplicial sets \cite{Thomason1980,Cisinski}. Thus, the theory being developed here and in \Cref{sec:cofibration category} bears a close relationship to more classical homotopy theoretic constructions. The details of that relationship remain to be explored.
%OLD
%    This superficial change means that \Cref{def:cofib} can also be seen as a strengthening, suited to this paper's filtered techniques, of the notion of \demph{long cofibration} of directed graphs given in \cite[Definition 6.2]{HepworthRoff2023}: every cofibration in the sense of this paper is also a long cofibration. As is shown in \cite[Proposition 6.7]{HepworthRoff2023}, a long cofibration \(A \hookrightarrow X\) is precisely a map of directed graphs that induces a \demph{Dwyer morphism} between the preorders generated by \(A\) and \(X\). The class of Dwyer morphisms contains all cofibrations in the Thomason model structure on the category of small categories, which is Quillen equivalent to the classical model structure on the category of simplicial sets \cite{Thomason1980,Cisinski}. Thus, the theory being developed here and in \Cref{sec:cofibration category} bears a close relationship to more classical homotopy theoretic constructions. The details of that relationship remain to be explored.
\end{remark}

\subsection{The excision and Mayer--Vietoris theorems}
\label{sec:excision-Mayer--Vietoris}

We can now state our excision and Mayer--Vietoris theorems,
which apply in the context of a pushout along a cofibration.
Our Mayer--Vietoris theorems, which hold for magnitude homology
and bigraded path homology, are fairly direct consequences of 
the excision theorem and so we prove them within this subsection.
The excision theorem, on the other hand, applies to every page
of the MPSS, but its proof is intricate
and so is deferred to the next subsection. 

Recall from \Cref{def:relative_MPSS} that, given any subgraph \(A \subseteq X\), one can consider the relative magnitude-path spectral sequence \(\{E^r_{\ast,\ast}(X,A), d^r\}_{r \geq 0}\). Given a pushout of the form in \eqref{pushout}, our excision theorem says that the relative magnitude-path spectral sequence of the pair \((X,A)\) coincides with that of the pair \((X \cup_A Y, Y)\).

\begin{theorem}[Excision in the magnitude-path spectral sequence]
\label{excision-all}
    Let $i\colon A\to X$ be a cofibration and let 
    $f\colon A\to Y$ be an arbitrary map of directed graphs,
    so that we obtain the pushout diagram of the form in~\eqref{pushout}.
    Then for all $r\geq 1$ the induced map
    \[
        E^r_{\ast,\ast}(X,A)
        \xrightarrow{\ \ \cong\ \ }
        E^r_{\ast,\ast}(X\cup_A Y,Y)
    \]
    is an isomorphism.
    In particular, on magnitude homology and bigraded path
    homology we have
    \[
        \MH_{\ast,\ast}(X,A)
        \xrightarrow{\ \ \cong\ \ }
        \MH_{\ast,\ast}(X\cup_A Y,Y)
    \]
    and 
    \[
        \PH_{\ast,\ast}(X,A)
        \xrightarrow{\ \ \cong\ \ }
        \PH_{\ast,\ast}(X\cup_A Y,Y).
    \]
\end{theorem}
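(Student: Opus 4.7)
The plan is to reduce the theorem to a quasi-isomorphism at $E^1$, which suffices by the comparison theorem for spectral sequences of filtered chain complexes. Concretely, I would show that the induced map of magnitude chain complexes $\MC(X,A) \hookrightarrow \MC(X \cup_A Y, Y)$ is a quasi-isomorphism at each length $\ell$; the special statements for magnitude and bigraded path homology will then follow immediately as the cases $r=1$ and $r=2$. The map is visibly injective, so the task reduces to showing that its cokernel $K$ is acyclic at each fixed length $\ell$. A brief bookkeeping identifies $K$ with $\MC(X \cup_A Y)/(\MC(X) + \MC(Y))$, the complex spanned by those tuples in $X \cup_A Y$ containing at least one vertex in $X\setminus A$ and at least one in $Y \setminus A$.

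Condition~(1) of \Cref{def:cofib} rules out paths from $X \setminus A$ into $A$, and hence into $Y$; therefore every generator of $K$ has the form $(y_0, \ldots, y_m, x_{m+1}, \ldots, x_k)$ with $y_i \in Y$ (at least one in $Y \setminus A$) and $x_i \in X \setminus A$. Writing $a^* := \pi(x_{m+1})$, condition~(2) of \Cref{def:cofib} together with the standard pushout distance formula supplies the key identity $d_{X \cup_A Y}(y_m, x_{m+1}) = d_Y(y_m, a^*) + d_X(a^*, x_{m+1})$. To prove acyclicity I would filter $K$ by the length $j$ of the $(X\setminus A)$-suffix: the magnitude-chain differential preserves this filtration, since only the deletion of an interior $(X \setminus A)$-vertex strictly decreases $j$. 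The associated graded piece $F_j K / F_{j-1} K$ decomposes, after fixing the suffix $\sigma$, as a direct sum of shifted copies of an auxiliary complex --- the tuples $(y_0, \ldots, y_m)$ in $Y$ (some $y_i \in Y \setminus A$) whose last vertex can reach $a^*$, equipped with the \emph{extended length} $\sum_{i<m} d(y_i, y_{i+1}) + d_Y(y_m, a^*)$, and with the associated-graded differential deleting only prefix vertices.

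Acyclicity of this auxiliary complex is expected to follow from an explicit cone-on-$a^*$ contracting homotopy, defined by appending $a^*$ to the tuple (and read as zero when the last vertex already equals $a^*$). Under the extended-length convention, $a^*$ functions as a length-preserving terminal endpoint that may be deleted, which is exactly what makes the identity $\partial h + h\partial = \mathrm{id}$ check out cleanly. A routine long-exact-sequence induction along the filtration then upgrades acyclicity of the associated graded to acyclicity of $K_{\ast,\ell}$, and the comparison theorem delivers the result. The main technical obstacle is the careful sign-bookkeeping in the verification of the contracting homotopy, along with the need to handle degenerate cases, such as when $y_m = a^*$, or when a deletion removes the unique $(Y \setminus A)$-vertex and forces the corresponding term to zero in the quotient complex.
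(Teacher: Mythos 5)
Your opening reduction to a quasi-isomorphism on magnitude chains is the right first move and matches the paper, but the very next step fails: the map $\MC(X,A)\to\MC(X\cup_A Y,Y)$ is \emph{not} injective in general, so you cannot argue via a short exact sequence with an acyclic cokernel $K$. The problem is that $f\colon A\to Y$ is an arbitrary map, not an induced subgraph inclusion, so distances between vertices of $A$ can strictly decrease after passing to $X\cup_A Y$; a generator of $\MC_{k,\ell}(X)$ whose image in $X\cup_A Y$ has length $<\ell$ maps to zero in the associated graded. Concretely, take $A = (a\to a')$, $X = A$ together with one extra vertex $x$ and edge $a'\to x$ (one checks $A\hookrightarrow X$ is a cofibration with $\pi(x)=a'$), and let $f$ collapse $A$ to a point $Y=\bullet$. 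Then $(a,x)$ is a nonzero class in $\MC_{1,2}(X,A)$ whose image $(\ast,x)$ has length $1$, hence vanishes in $\MC_{1,2}(X\cup_A Y,Y)$. For the same reason your description of the cokernel is incomplete: besides tuples meeting $Y\setminus f(A)$, it also contains tuples lying entirely in $(X\setminus A)\sqcup f(A)$ that admit no length-preserving lift to $X$. The paper explicitly flags this as the reason its argument must differ from the Hepworth--Willerton undirected case.

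The paper's workaround is worth internalising: since $g$ restricts to an isomorphism $X\setminus A\cong(X\cup_A Y)\setminus Y$, one quotients \emph{both} sides by these identified subcomplexes and, by the five lemma, reduces to showing that
\[
\frac{\MC_{\ast,\ast}(X)}{\MC_{\ast,\ast}(A)+\MC_{\ast,\ast}(X\setminus A)}
\longrightarrow
\frac{\MC_{\ast,\ast}(X\cup_A Y)}{\MC_{\ast,\ast}(Y)+\MC_{\ast,\ast}((X\cup_A Y)\setminus Y)}
\]
is a quasi-isomorphism --- a statement about a \emph{map}, with no injectivity claim anywhere. Your filtration by the $(X\setminus A)$-suffix and your $\pi$-insertion homotopy then reappear almost verbatim, but with one important correction: the graded pieces are direct sums of complexes $A_{\ast,\ell}(a,z)$ (tuples from $a\in A$ to $z\in X\setminus A$ through $A$), and these are \emph{not} all acyclic. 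Exactly one summand per $z$, namely $A_{\ast,d(\pi(z),z)}(\pi(z),z)$, has homology $R$ in degree $1$, and the proof concludes not by showing everything contracts but by matching these surviving summands isomorphically on the two sides. If you want to salvage your architecture, replace ``injective with acyclic cokernel'' by ``acyclic mapping cone'' or adopt the double-quotient reduction; as written, the first step of your argument is false.
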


In the classical topological setting, 
the excision theorem for spaces gives rise to the 
Mayer--Vietoris sequence by `stitching together', 
using a standard argument of homological algebra, 
the long exact sequences
of pairs of the form $(X,A)$ and $(X\cup_A Y,Y)$.
The same reasoning can be applied here 
to either the short exact sequences
of magnitude homology groups, 
or the long exact sequences of bigraded path homology groups,
that are associated to a pair by~\Cref{thm:les-pair}.
That is how we obtain the next two theorems, 
which give a short exact Mayer--Vietoris sequence for magnitude homology,
and a long exact Mayer--Vietoris sequence for path homology.

However, we are not at this time able to give a Mayer--Vietoris 
result for the $E^r$-pages for any $r\geq 3$.
Indeed, the snake lemma allows us to 
pass from short exact sequences of chain complexes to
long exact sequences of homology groups,
and this is how we pass from $E^1$ to $E^2$ 
in the proof of~\Cref{thm:les-pair}.
But we do not know of any account of the structure obtained
from \emph{long} exact sequences of chain complexes by taking homology,
and this prevents us from obtaining results of Mayer--Vietoris
type on the later pages of the MPSS.

We now state the theorems formally, before proceeding to the proofs.

\begin{theorem}[Mayer--Vietoris for 
magnitude homology of directed graphs]\label{Mayer--Vietoris-magnitude}
    Suppose given a cofibration $i\colon A\to X$
    and any map $f\colon A\to Y$, so that we have a pushout of the form in~\eqref{pushout}.
    Then we have the \demph{Mayer--Vietoris sequence}
    in magnitude homology of directed graphs, 
    in the form of a short  exact sequence:
    \begin{equation}\label{eq:MH_MV}
        0
        \to
        \MH_{\ast,\ast}(A)
        \xrightarrow{(i_\ast,-f_\ast)}
        \MH_{\ast,\ast}(X)\oplus \MH_{\ast,\ast}(Y)
        \xrightarrow{g_\ast\oplus j_\ast}
        \MH_{\ast,\ast} (X\cup_A Y)
        \to
        0
    \end{equation}
    This sequence is split.
\end{theorem}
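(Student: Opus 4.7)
The plan is to derive the Mayer--Vietoris sequence as a formal consequence of the split short exact sequences of pairs supplied by \Cref{thm:les-pair}, combined with the excision isomorphism of \Cref{excision-all}, via a routine diagram chase. First, I will verify that both $i\colon A\hookrightarrow X$ and $j\colon Y\hookrightarrow X\cup_A Y$ satisfy the hypotheses of \Cref{thm:les-pair}: the map $i$ meets the ``no-entry'' condition by \Cref{def:cofib}, and $j$ does too because cofibrations are closed under pushout by \cite[Proposition~2.13]{CDKOSW}. Applying \Cref{thm:les-pair} yields two split short exact sequences,
\[
0\to \MH_{\ast,\ast}(A)\xrightarrow{i_\ast}\MH_{\ast,\ast}(X)\xrightarrow{q_X}\MH_{\ast,\ast}(X,A)\to 0
\]
and
\[
0\to \MH_{\ast,\ast}(Y)\xrightarrow{j_\ast}\MH_{\ast,\ast}(X\cup_A Y)\xrightarrow{q_Y}\MH_{\ast,\ast}(X\cup_A Y,Y)\to 0.
\]

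Next, I will invoke \Cref{excision-all} to obtain an isomorphism $e\colon\MH_{\ast,\ast}(X,A)\xrightarrow{\cong}\MH_{\ast,\ast}(X\cup_A Y,Y)$ compatible with $g_\ast$, and hence fitting the two sequences above into a commutative ladder. A standard diagram chase then yields exactness of~\eqref{eq:MH_MV}: injectivity of $(i_\ast,-f_\ast)$ follows from injectivity of $i_\ast$; the composite with $g_\ast\oplus j_\ast$ vanishes by commutativity of the pushout square; surjectivity of $g_\ast\oplus j_\ast$ combines surjectivity of $q_X$ with the iso $e$ and the identity $\ker q_Y=\operatorname{im} j_\ast$; and exactness at $\MH_{\ast,\ast}(X)\oplus\MH_{\ast,\ast}(Y)$ uses injectivity of $e$ together with injectivity of $j_\ast$.

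To establish splitness I will use the chain-level retraction $p_A\colon\MC(X)\to\MC(A)$ constructed in the proof of \Cref{thm:les-pair}, which sends a generator $(x_0,\ldots,x_k)$ to itself if $x_k\in A$ and to $0$ otherwise. On passing to homology, $p_A$ descends to a retraction of $i_\ast$, and then $(x,y)\mapsto p_A(x)$ is a retraction of $(i_\ast,-f_\ast)\colon\MH_{\ast,\ast}(A)\to\MH_{\ast,\ast}(X)\oplus\MH_{\ast,\ast}(Y)$, witnessing the splitting of~\eqref{eq:MH_MV}.

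The only substantive ingredient is the excision isomorphism of \Cref{excision-all}, whose proof is deferred to \Cref{sec:proof-excision}; granted that input, the rest is purely formal homological algebra, and I do not anticipate any real obstacle in the execution.
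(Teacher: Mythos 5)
Your proposal is correct and follows essentially the same route as the paper: both derive the result from the exact sequences of pairs in Theorem~\ref{thm:les-pair} together with the excision isomorphism of Theorem~\ref{excision-all}, followed by standard homological algebra. The only cosmetic difference is that the paper first assembles a long exact Mayer--Vietoris sequence (citing a Hatcher exercise) and then observes it splits because $i_\ast$ is split injective, whereas you chase the two split short exact sequences directly and exhibit the splitting via the chain-level retraction $p$; both are valid and equivalent in substance.
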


\begin{remark}
\label{rmk:MV_comparison}
This theorem can be compared with  the Mayer--Vietoris theorem for magnitude homology of \emph{undirected} graphs that appeared as Theorem~6.6 in Hepworth and Willerton~\cite{HepworthWillerton2017}. The result for undirected graphs applies in the presence of a projecting decomposition; we refer the reader to \cite[Definition 6.3]{HepworthWillerton2017} for the definition.

The category of undirected graphs embeds canonically into \(\DiGraph\) by taking a graph \(G\) and replacing each of its edges \(\{x,y\}\) by a pair of directed edges \(x \rightleftarrows y\). However, given a projecting decomposition \(G = X \cup_A Y\), it is almost never the case that the inclusion of \(A\) into \(X\) (or into \(Y\)) induces, under this operation, a cofibration in the sense of \Cref{def:cofib}. Indeed, that occurs only when \(X\) (respectively, \(Y\)) is the disjoint union of \(A\) and its complement, in which case the split exact sequence \eqref{eq:MH_MV} holds trivially for \(\MH_{\ast,\ast}(X \cup_A Y)\). On the other hand, there are many non-trivial examples of projecting decompositions of undirected graphs; see, for example, Corollaries 4.13 and 4.14 of Leinster \cite{LeinsterGraph}. Thus, our Mayer--Vietoris theorem and that in~\cite{HepworthWillerton2017} are quite independent.
\end{remark}

\begin{theorem}[Mayer--Vietoris for 
bigraded path homology]\label{Mayer--Vietoris-path}
    Suppose given a cofibration $i\colon A\to X$
    and any map $f\colon A\to Y$, so that we have a pushout
    of the form in~\eqref{pushout}.
    Then we have the \demph{Mayer--Vietoris sequence}
    in bigraded path homology, meaning that there is a
    long exact sequence:
    \[
        \cdots
        \to
        \PH_{\ast,\ast}(A)
        \xrightarrow{(i_\ast,-f_\ast)}
        \PH_{\ast,\ast}(X)\oplus\PH_\ast(Y)
        \xrightarrow{g_\ast\oplus j_\ast}
        \PH_{\ast,\ast} (X\cup_A Y)
        \xrightarrow{\partial_\ast}
        \PH_{\ast-1,\ast}(A)
        \to
        \cdots
    \]
\end{theorem}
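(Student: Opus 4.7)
The plan is to combine two ingredients already developed earlier in the section: the long exact sequence of a pair for bigraded path homology (\Cref{thm:les-pair}) and the excision isomorphism (\Cref{excision-all}). These fit together into the desired Mayer--Vietoris sequence by the standard Barratt--Whitehead stitching argument familiar from algebraic topology.

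First, I would verify that both pairs $(X,A)$ and $(X\cup_A Y, Y)$ meet the hypotheses of \Cref{thm:les-pair}. Condition~(1) of \Cref{def:cofib} provides the no-entry condition for $A\subseteq X$, and as noted immediately after \Cref{def:cofib} this forces $A$ to be convex in $X$ as well. By \cite[Proposition~2.13]{CDKOSW}, cofibrations are stable under pushout, so $j\colon Y\hookrightarrow X\cup_A Y$ is also a cofibration and the same conclusions apply to $(X\cup_A Y,Y)$. Applying \Cref{thm:les-pair} to each pair yields two long exact sequences of bigraded path homology,
\[
\cdots\to \PH_{\ast,\ast}(A)\xrightarrow{i_\ast}\PH_{\ast,\ast}(X)\to \PH_{\ast,\ast}(X,A)\xrightarrow{\partial}\PH_{\ast-1,\ast-1}(A)\to\cdots
\]
and
\[
\cdots\to \PH_{\ast,\ast}(Y)\xrightarrow{j_\ast}\PH_{\ast,\ast}(X\cup_A Y)\to \PH_{\ast,\ast}(X\cup_A Y,Y)\xrightarrow{\partial'}\PH_{\ast-1,\ast-1}(Y)\to\cdots.
\]

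Next, the map $g\colon X\to X\cup_A Y$ carries $A$ into $Y$ by construction of the pushout, so it is a map of pairs $(X,A)\to (X\cup_A Y,Y)$; naturality of \Cref{thm:les-pair} in the pair produces a commutative ladder connecting the two sequences above. The excision theorem (\Cref{excision-all}) then tells us that the induced vertical map on relative terms,
\[
\PH_{\ast,\ast}(X,A)\xrightarrow{\cong}\PH_{\ast,\ast}(X\cup_A Y,Y),
\]
is an isomorphism in every bidegree.

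Finally, I would apply the Barratt--Whitehead construction to this ladder: define the Mayer--Vietoris connecting homomorphism $\partial_\ast$ as the composite of the projection $\PH_{\ast,\ast}(X\cup_A Y)\to \PH_{\ast,\ast}(X\cup_A Y,Y)$, the inverse of the excision isomorphism, and the connecting map $\partial$ of the pair $(X,A)$. Exactness at each of the three positions of the Mayer--Vietoris sequence then follows from a routine diagram chase using exactness of the two pair sequences together with commutativity of the ladder. Since both substantive ingredients --- excision and the long exact sequence of a pair --- are already in place, this final step is purely formal and presents no serious obstacle; indeed the main work is bookkeeping the bidegree shifts so that they agree with the $(-1,-1)$ shift induced by the $d^1$-differential of the MPSS.
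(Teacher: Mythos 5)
Your proof is correct and takes essentially the same route as the paper: the authors likewise stitch together the long exact sequences of the pairs $(X,A)$ and $(X\cup_A Y,Y)$ supplied by \Cref{thm:les-pair} via the excision isomorphism of \Cref{excision-all}, using the Barratt--Whitehead argument (the paper cites Hatcher's exercise for the magnitude version and says the bigraded path homology case is identical). Your bookkeeping of the connecting map as landing in $\PH_{\ast-1,\ast-1}(A)$, reflecting the $(-1,-1)$ shift of $d^1$, is the correct reading of the degree shifts.
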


Theorem~\ref{excision-all} is the hardest to prove of the three 
theorems in this section, and we therefore leave its
proof to the end of the section.
The proofs of the other two theorems are 
straightforward once we have excision, so we tackle them first.

\begin{proof}[Proof of Theorem~\ref{Mayer--Vietoris-magnitude},
assuming Theorem~\ref{excision-all}]
    Extracting long exact sequences of Mayer--Vietoris type 
    from excision theorems
    is common in the algebraic topology literature.
    Indeed, if we consider the commutative diagram 
    \[
    \begin{tikzcd}
        \cdots
        \arrow{r}{}
        &
        \MH_{\ast,\ast}(A)
        \arrow{r}{i}
        \arrow{d}{f}
        &
        \MH_{\ast,\ast}(X)
        \arrow{r}{}
        \arrow{d}{g}
        &
        \MH_{\ast,\ast}(X,A)
        \arrow{r}{}
        \arrow{d}{\cong}
        &
        \cdots
        \\
        \cdots
        \arrow{r}{}
        &
        \MH_{\ast,\ast}(Y)
        \arrow[swap]{r}{j}
        &
        \MH_{\ast,\ast}(X\cup_A Y)
        \arrow{r}{}
        &
        \MH_{\ast,\ast}(X\cup_A Y,Y)
        \arrow{r}{}
        &
        \cdots
    \end{tikzcd}
    \]
    whose rows are obtained from \Cref{thm:les-pair},
    and identify every pair of third terms using the excision 
    isomorphism, 
    then we may apply the result of exercise~38 of~\cite[p.159]{Hatcher}
    to obtain the following long exact sequence:
    \[
    \begin{tikzcd}
        \cdots
        \arrow{r}{}
        &
        \MH_{\ast,\ast}(A)
        \arrow{r}{(i_\ast,-f_\ast)}
        &
        \MH_{\ast,\ast}(X)
        \oplus
        \MH_{\ast,\ast}(Y)
        \arrow{r}{g_\ast\oplus j_\ast}
        &
        \MH_{\ast,\ast}(X\cup_A Y)
        \arrow{r}{}
        &
        \cdots
    \end{tikzcd}
    \]
    Since the inclusion $A\subseteq X$ satisfies the condition
    that there are no edges from $X\setminus A$ to $A$,
    \Cref{thm:les-pair} shows that $i_\ast$ is split, 
    and it follows that $(i_\ast,-f_\ast)$ is also split,
    so that we obtain the short exact sequence of the claim.
\end{proof}

\begin{proof}[Proof of Theorem~\ref{Mayer--Vietoris-path},
assuming Theorem~\ref{Mayer--Vietoris-magnitude}]
    This is similar to the proof of~\Cref{Mayer--Vietoris-magnitude},
    except that one uses the long exact sequences 
    of bigraded path homology groups obtained from~\Cref{thm:les-pair}.
    We leave the details to the reader.
\end{proof}

Let us give here an immediate application of our excision theorem. A further application will be seen in the more substantial computation in \Cref{sec:mn_cycles}.

\begin{definition}
    Let $X$ be a directed graph.
    We define $SX$ to be the graph obtained from $X$ by 
    adding two new vertices, $+1$ and $-1$, together with an
    edge from each of the new vertices to each vertex of $X$.
    Thus $SX$ is a form of `unreduced suspension' of $X$; see \Cref{eg:spheres} for an illustrative example.
\end{definition}

The following result can be compared with Proposition~5.10
of Grigor'yan \textit{et al}~\cite{PH-path-complexes}.

\begin{corollary}[A suspension theorem]
\label{cor:suspension}
    Let $X$ be a nonempty directed graph, and
    $k,\ell\geq 0$.
    Then there is a natural isomorphism
    \[
        \ker\bigl(\PH_{k,\ell}(SX)\to\PH_{k,\ell}(\bullet)\bigr)
        \cong
        \ker\bigl(\PH_{k-1,\ell-1}(X)\to\PH_{k-1,\ell-1}(\bullet)\bigr).
    \]
    In particular,
    $\PH_{k,\ell}(SX)$ and $\PH_{k-1,\ell}(X)$ are 
    naturally isomorphic unless $k=\ell=0$ or $k=\ell=1$,
    in which case they differ only by a single summand of $R$.
\end{corollary}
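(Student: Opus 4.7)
The plan is to combine the cone construction from \Cref{cone-cofibration} with the excision theorem (\Cref{excision-all}) and two applications of the pair long exact sequence (\Cref{thm:les-pair}). First I would apply excision to the cofibration $X\hookrightarrow CX$ together with the terminal map $X\to\bullet$, obtaining natural isomorphisms
\[
E^r(CX,X)\;\cong\;E^r(CX\cup_X\bullet,\bullet)\qquad(r\geq 1),
\]
and in particular a natural isomorphism $\PH_{\ast,\ast}(CX,X)\cong \PH_{\ast,\ast}(CX\cup_X\bullet,\bullet)$ on the $E^2$-page.

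The crux of the argument is to identify $CX\cup_X\bullet$ with $SX$ at the level of filtered reachability chains. As a graph, $CX\cup_X\bullet$ has vertex set $V(X)\times\{0\}\cup\{+1,\ast\}$, where $\ast$ is the image of the collapsed $V(X)\times\{-1\}$; its non-loop edges are $+1\to(x,0)$, $\ast\to(x,0)$, and $(x,0)\to(x',0)$ for $(x,x')\in E(X)$, while the collapse additionally introduces loops at $\ast$ and $+1$. The bijection of vertex sets $(x,0)\leftrightarrow x$, $\ast\leftrightarrow -1$, $+1\leftrightarrow +1$ carries these non-loop edges onto the edges of $SX$ and preserves the shortest-path metric. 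Since loops contribute no generators to $\RC$ (whose tuples require consecutive entries to be distinct), the resulting canonical graph map $CX\cup_X\bullet\to SX$ induces an isomorphism of filtered reachability chain complexes, and this carries $\RC(\bullet)$ to $\RC(\{-1\})$. Hence $E^r(CX\cup_X\bullet,\bullet)\cong E^r(SX,\{-1\})$ naturally in $X$.

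Next I would invoke the pair LES from \Cref{thm:les-pair} for both $(CX,X)$ and $(SX,\{-1\})$; the no-entry hypothesis holds in each case, either as part of the cofibration structure of $X\hookrightarrow CX$ or because no edges of $SX$ enter $\{-1\}$. Because $CX$ is $1$-contractible by \Cref{cone-cofibration}, \Cref{r-homotopy-invariance} gives $\PH(CX)\cong\PH(\bullet)$, concentrated in bidegree $(0,0)$ with value $R$; similarly $\PH(\{-1\})\cong\PH(\bullet)$. A bidegree-by-bidegree analysis of the two LESs—in which the only delicacy arises at the boundary bidegrees $(0,0)$ and $(1,1)$, where one uses the surjectivity of the augmentation $\PH_{0,0}(X)\to R$ for nonempty $X$ and the connectedness of $SX$—produces natural isomorphisms
\[
\PH_{k,\ell}(CX,X)\;\cong\;\widetilde{\PH}_{k-1,\ell-1}(X)\qquad\text{and}\qquad\PH_{k,\ell}(SX,\{-1\})\;\cong\;\widetilde{\PH}_{k,\ell}(SX)
\]
in every bidegree, writing $\widetilde{\PH}_{k,\ell}(-):=\ker(\PH_{k,\ell}(-)\to\PH_{k,\ell}(\bullet))$. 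Composing these three natural isomorphisms yields the required $\widetilde{\PH}_{k,\ell}(SX)\cong\widetilde{\PH}_{k-1,\ell-1}(X)$, and the ``in particular'' statement follows because $\widetilde{\PH}$ and $\PH$ differ only at bidegree $(0,0)$, and by exactly a single summand of $R$, so the exceptional bidegrees are $(k,\ell)=(0,0)$ (from the left-hand side) and $(k,\ell)=(1,1)$ (where the right-hand side sits at $(0,0)$).

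The main obstacle will be the identification in the second paragraph: one must verify carefully that the loops introduced by the pushout construction contribute nothing to the reachability chain complex and do not disturb the length filtration, so that the collapse map really is an isomorphism of filtered chain complexes. Once this is in place the rest is bookkeeping with long exact sequences, with the only real subtlety being the boundary-case analysis that extracts the single $R$-summand discrepancy at $(0,0)$ and $(1,1)$.
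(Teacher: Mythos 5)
Your proposal is correct and follows essentially the same route as the paper: the cofibration $X\hookrightarrow CX$, the pushout identifying $SX$ with $CX\cup_X\bullet$ (the paper simply exhibits the pushout square directly, while you compute the pushout and check the loop-and-metric bookkeeping), excision, and the two long exact sequences of pairs combined with $1$-contractibility of $CX$. The boundary-bidegree analysis you flag is exactly the content of the paper's final two isomorphisms, so no gap remains.
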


Note that the kernels appearing in the statement could be called
\emph{reduced} bigraded path homology groups by analogy with 
algebraic topology.
Since reduced groups do not appear elsewhere in the paper,
we restrain ourselves from defining these in general.

\begin{proof}
    Recall from~\Cref{cone-cofibration} the graph
    $I$  given by $-1\to 0\leftarrow +1$,
    and the graph $CX$ obtained from $X\square I$ by identifying
    $X\square\{+1\}$ to a single vertex $+1$.
    We  showed that the inclusion $X\hookrightarrow CX$ 
    that identifies $X$ with the subgraph $X\square\{-1\}$ 
    is a cofibration.
    Now define a map $CX\to SX$ sending $X\square\{0\}$ to $X$,
    $X\square\{+1\}$ to $+1$, and $X\square\{-1\}$ to $-1$.
    Then we have a pushout square
    \[
    \begin{tikzcd}
        X \arrow{r}{} \arrow[hook]{d} & \bullet \arrow[hook]{d}{} \\
        CX \arrow[r] & SX
    \end{tikzcd}
    \]
    where the right-hand map includes the one-vertex graph
    as the subgraph with single vertex $-1$.
    Excision gives an isomorphism
    \[
        \PH_{\ast,\ast}(SX,\bullet)
        \cong
        \PH_{\ast,\ast}(CX,X).
    \]
    Next, the map $\PH_{k,\ell}(SX)\to\PH_{k,\ell}(SX,\bullet)$
    induces an isomorphism
    \[
        \ker\bigl(\PH_{k,\ell}(SX)\to\PH_{k,\ell}(\bullet)\bigr)
        \cong
        \PH_{k,\ell}(SX,\bullet).
    \]
    This is seen by using the map of pairs 
    $(SX,\bullet)\to(\bullet,\bullet)$
    to compare the associated long exact sequences.
    Finally,
    the connecting morphism $\PH_{k,\ell}(CX,X)\to\PH_{k-1,\ell-1}(X)$
    induces an isomorphism
    \[     
        \PH_{k,\ell}(CX,X)
        \cong
        \ker\bigl(\PH_{k-1,\ell-1}(X)\to\PH_{k-1,\ell}(\bullet)\bigr),
    \]
    as one sees using the fact that $CX\to\bullet$ induces an 
    isomorphism in path homology thanks to $1$-contractibility
    of $CX$.
    The three isomorphisms obtained in this paragraph are natural
    in $X$ and, combined, they complete the proof.
\end{proof}

\begin{example}[A family of spheres]\label{eg:spheres}
    Denote the empty graph by \(\emptyset\), and for each \(n \geq 0\) let \(\mathbb{S}^n\) denote the \((n+1)\)-fold suspension \(S^{n+1} \emptyset\). The directed graph \(\mathbb{S}^n\) is then analogous to an `\(n\)-sphere'.
    (Indeed, it is the face poset of the regular CW decomposition of 
    the topological $n$-sphere by hemispheres.)
    Here we depict, from left to right, \(\mathbb{S}^0\), \(\mathbb{S}^1\) and \(\mathbb{S}^2\), with the new vertices \(\pm 1\) labelled in each case:
    \[
    \begin{tikzpicture}[scale=.9,baseline=0]
        \draw[dashed] (-2.2,2.2) -- (2.2,2.2) -- (2.2,-2.2) -- (-2.2,-2.2) -- (-2.2,2.2);
    
        \node [label={[label distance=.1]90:\small{+1}}] (a) at (0:1.2) {$\bullet$};
        \node [label={[label distance=.1]90:\small{-1}}] (b) at (180:1.2) {$\bullet$};
    \end{tikzpicture}
    \qquad
    \begin{tikzpicture}[scale=.9,baseline=0]
        \draw[dashed] (-2.2,2.2) -- (2.2,2.2) -- (2.2,-2.2) -- (-2.2,-2.2) -- (-2.2,2.2);
        
        \node (a) at (0:1.2) {$\bullet$};
        \node [label={[label distance=.1]90:\small{+1}}] (b) at (90:1.2) {$\bullet$};
        \node (c) at (180:1.2) {$\bullet$};
        \node [label={[label distance=.1]270:\small{-1}}] (d) at (270:1.2) {$\bullet$};

        \draw[stealth-] (a) -- (b);
        \draw[stealth-] (c) -- (b);
        \draw[stealth-] (a) -- (d);
        \draw[stealth-] (c) -- (d);
    \end{tikzpicture}
    \qquad
    \begin{tikzpicture}[scale=.9,baseline=0]
        \draw[dashed] (-2.2,2.2) -- (2.2,2.2) -- (2.2,-2.2) -- (-2.2,-2.2) -- (-2.2,2.2);
        
        \node (a) at (0:1.8) {$\bullet$};
        \node [gray] (b) at (.4,.6) {$\bullet$};
        \node (c) at (180:1.8) {$\bullet$};
        \node (d) at (-.4,-.6) {$\bullet$};
        \node [label={[label distance=.1]180:\small{+1}}] (e) at (90:1.8) {$\bullet$};
        \node [label={[label distance=.1]180:\small{-1}}] (f) at (270:1.8) {$\bullet$};

        \draw[gray, stealth-] (a) -- (b);
        \draw[gray, stealth-] (c) -- (b);
        \draw[stealth-] (a) -- (d);
        \draw[stealth-] (c) -- (d);
        
        \draw[stealth-] (a) -- (e);
        \draw[gray, stealth-] (b) -- (e);
        \draw[stealth-] (c) -- (e);
        \draw[stealth-] (d) -- (e);

        \draw[stealth-] (a) -- (f);
        \draw[gray, stealth-] (b) -- (f);
        \draw[stealth-] (c) -- (f);
        \draw[stealth-] (d) -- (f);
    \end{tikzpicture}
    \]
    The bigraded path homology of \(\mathbb{S}^0\) is concentrated in bidegree \((0,0)\), where it is given by \(R \oplus R\), so that 
    $\ker\bigl(\PH_{\ast,\ast}(\mathbb{S}^0)
    \to\PH_{\ast,\ast}(\bullet)\bigr)$
    is a single copy of $R$ concentrated in bidegree $(0,0)$.
    Applying \Cref{cor:suspension} and inducting on \(n\), 
    one finds that, for $n\geq 0$, 
    $\ker\bigl(\PH_{\ast,\ast}(\mathbb{S}^n)
    \to\PH_{\ast,\ast}(\bullet)\bigr)$
    is a single copy of $R$ concentrated in bidegree $(n,n)$.
    It then follows, for $n>0$,
    that \(\PH_{k, \ell}(\mathbb{S}^n) = 0\) when \(k \neq \ell\), while 
    \[
    \PH_{k,k}(\mathbb{S}^n) \cong
    \begin{cases}
        R & \text{if } k = 0, n\\
        0 & \text{otherwise}.
    \end{cases}
    \]
    That is, for every \(n \geq 0\), the bigraded path homology of \(\mathbb{S}^n\) is concentrated on the diagonal, where it consists of a copy of the singular homology of the topological \(n\)-sphere.
\end{example}

\subsection{Proof of the excision theorem}
\label{sec:proof-excision}

We now embark on the proof of the excision theorem,
\Cref{excision-all}.
Our proof owes a significant debt to the proof in 
Section~9 of~\cite{HepworthWillerton2017},
and indeed Lemma~\ref{A-acyclic} is closely related to
Lemma~9.2 of~\cite{HepworthWillerton2017}.
However, the overall structure of the proof here is essentially
different from that one, in order to cope with the
fact that the map $f\colon A\to Y$ appearing in our pushout
square~\eqref{pushout} is not necessarily the inclusion
of an induced subgraph. 

The proof of~\Cref{excision-all} is rather intricate, 
so we offer a sketch here. The objective is to prove that
\[
    E^r_{\ast,\ast}(X,A)\to E^r_{\ast,\ast}(X\cup_A Y,Y)
\]
is an isomorphism for all $r\geq 1$.
Since an isomorphism of chain complexes induces an isomorphism
on homology, 
it is sufficient to prove that this is an isomorphism when $r=1$,
and this, by the definition of magnitude homology of a pair,
is equivalent to showing that the map
\[
    \frac{\MC_{\ast,\ast}(X)}{\MC_{\ast,\ast}(A)}
    \longrightarrow
    \frac{\MC_{\ast,\ast}(X\cup_A Y)}{\MC_{\ast,\ast}(Y)}
\]
is a quasi-isomorphism.
The map $X\to X\cup_A Y$ induces a map
$X\setminus A\to (X\cup_A Y)\setminus Y$
that is in fact an isomorphism of directed graphs.
By the five-lemma it is therefore sufficient to show that the  map
\[
    \frac{\MC_{\ast,\ast}(X)}
    {\MC_{\ast,\ast}(A)+\MC_{\ast,\ast}(X\setminus A)}
    \longrightarrow
    \frac{\MC_{\ast,\ast}(X\cup_A Y)}{\MC_{\ast,\ast}(Y)
    +\MC_{\ast,\ast}((X\cup_A Y)\setminus Y)}
\]
is a quasi-isomorphism.
In order to prove this, we equip the domain and codomain with
compatible filtrations
\[
    0= F_0\subseteq F_1\subseteq \cdots\subseteq F_{\ell-1}=
    \frac{\MC_{\ast,\ast}(X)}
    {\MC_{\ast,\ast}(A)+\MC_{\ast,\ast}(X\setminus A)}
\]
and 
\[
    0=F'_0\subseteq F'_1\subseteq \cdots\subseteq F'_{\ell-1}=
    \frac{\MC_{\ast,\ast}(X\cup_A Y)}{\MC_{\ast,\ast}(Y)
    +\MC_{\ast,\ast}((X\cup_A Y)\setminus Y)},
\]
so that we are reduced to proving that the induced maps of filtration
quotients
\begin{equation}\label{eq:filtration-quotients}
    F_i/F_{i-1}\longrightarrow F'_i/F'_{i-1}
\end{equation}
are all quasi-isomorphisms.
It turns out that each of these filtration quotients has a decomposition 
as a direct sum of suspensions of more elementary complexes
that we denote 
\[
    A_{\ast,\ell}(-,-)
    \qquad\text{and}\qquad
    A'_{\ast,\ell}(-,-)
\]
respectively.
These complexes all have very 
simple homology that can be computed directly.
With that computation in hand, it is then possible 
to  verify directly that 
\eqref{eq:filtration-quotients} is an isomorphism on homology,
which completes the proof.

We now embark on the proof in detail.

\begin{definition}\label{definition-A}
    Let $A\hookrightarrow X$ be a cofibration,
    let $\ell\geq 0$, let $x\in X\setminus A$
    and let $a\in A$.
    Define $A_{\ast,\ell}(a,x)$ to be the subcomplex of
    $\MC_{\ast,\ell}(X)$ spanned by those tuples
    $(x_0,\ldots,x_k)$ 
    for which $x_0=a$, $x_k=x$ and $x_1,\ldots,x_{k-1}\in A$.
\end{definition}

\begin{lemma}\label{A-not-acyclic}
    Suppose we are in the situation of 
    Definition~\ref{definition-A},
    and that $a=\pi(x)$ and $\ell=d(a,x)$.
    Then the homology of $A_{\ast,\ell}(a,x)$ is a
    single copy of $R$ in degree $1$,
    generated by the homology class of the tuple $(\pi(x),x)$.
\end{lemma}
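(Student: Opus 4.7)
The plan is to show that the complex $A_{\ast,\ell}(a,x)$ has a single non-zero generator, namely $(\pi(x), x)$ in degree $1$, from which the conclusion is immediate. The key input is condition~(2) of the definition of a cofibration.

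Consider any generator $(x_0, x_1, \ldots, x_k)$ of $A_{\ast,\ell}(a,x)$: by definition $x_0 = a = \pi(x)$, $x_k = x$, the intermediate entries $x_1, \ldots, x_{k-1}$ lie in $A$, consecutive entries are distinct, and the total length equals $\ell$. Suppose $k \geq 1$. Since $x_{k-1} \in A$, condition~(2) of~\Cref{def:cofib}, applied with $x_{k-1}$ playing the role of the ``test vertex'' in $A$, yields
\[
d(x_{k-1}, x) \;=\; d(x_{k-1}, \pi(x)) + d(\pi(x), x) \;=\; d(x_{k-1}, a) + \ell.
\]
Substituting into the length expansion $\ell = \sum_{i=1}^{k} d(x_{i-1}, x_i)$ gives
\[
\sum_{i=1}^{k-1} d(x_{i-1}, x_i) \;+\; d(x_{k-1}, a) \;=\; 0.
\]
Since distances in a directed graph are non-negative integers, every summand must vanish. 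In particular, if $k \geq 2$, then $d(x_0, x_1) = d(a, x_1) = 0$ forces $x_1 = a = x_0$, contradicting the requirement that consecutive entries of a generator be distinct.

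So only $k = 0$ and $k = 1$ remain to check. The case $k = 0$ would require $a = x$, which is impossible because $a \in A$ and $x \notin A$. For $k = 1$ the only candidate is the tuple $(a, x) = (\pi(x), x)$, which is a bona fide generator: its two entries are distinct, a directed path from $a$ to $x$ exists since $d(a,x) = \ell < \infty$, and its length is precisely $\ell$. Therefore $A_{\ast,\ell}(a,x)$ is the free $R$-module of rank one concentrated in degree~$1$ with basis $\{(\pi(x), x)\}$, and its homology is a single copy of $R$ in degree~$1$, generated by the class of $(\pi(x), x)$, as required.

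There is no substantial obstacle here: the whole statement reduces to a length-accounting argument driven by the cofibration property. The only subtlety worth flagging is that condition~(2) of~\Cref{def:cofib} is quantified over \emph{all} elements of $A$, and we use this freedom by feeding in the interior vertex $x_{k-1}$; this is exactly where the hypothesis that the intermediate vertices of the generator lie in $A$ gets used.
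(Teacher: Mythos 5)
Your proof is correct and takes essentially the same route as the paper's: both arguments apply condition (2) of the definition of a cofibration with an intermediate vertex of the tuple serving as the test point in $A$, and combine this with the constraint $\ell = d(\pi(x),x)$ on the total length to force all intermediate distances to vanish, ruling out every generator except $(\pi(x),x)$. The only cosmetic difference is that the paper works with an arbitrary intermediate vertex $x_i$ via the triangle inequality, whereas you single out $x_{k-1}$ and use the exact length expansion.
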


\begin{proof}
    Any generator of $A_{k,\ell}(\pi(x),x)$ has form 
    $(\pi(x),x_1,\ldots,x_{k-1},x)$.
    The length of such a tuple  satisfies 
    \begin{align*}
        d(\pi(x),x)
        &=
        \ell(\pi(x),x_1,\ldots,x_{k-1},x)
        \\
        &\geq 
        d(\pi(x),x_i)+d(x_i,x)
        \\
        &=
        d(\pi(x),x_i)+d(x_i,\pi(x))+d(\pi(x),x)
    \end{align*}
    where the second line follows from the triangle inequality,
    and the third follows from the second property 
    of a cofibration,
    using the fact that $x_i\in A$.
    It follows that $d(\pi(x),x_i)=d(x_i,\pi(x))=0$, so that 
    $x_i=\pi(x)$. 
    This can only happen if $k=1$, and the  result follows. 
\end{proof}
    
\begin{lemma}\label{A-acyclic}
    Suppose we are in the situation of 
    Definition~\ref{definition-A},
    and that at least one of $a=\pi(x)$ and $\ell=d(a,x)$ fails.
    Then $A_{\ast,\ell}(a,x)$ is acyclic.
\end{lemma}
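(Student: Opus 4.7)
The natural approach is to construct an explicit chain contraction on $A_{\ast,\ell}(a,x)$. For a generator $(a,y_1,\ldots,y_{k-1},x)$ with $k\geq 1$, I propose to define
\[
    h(a,y_1,\ldots,y_{k-1},x)
    =
    (-1)^k(a,y_1,\ldots,y_{k-1},\pi(x),x),
\]
understood to be zero if the right-hand side contains consecutive repetitions (that is, if $y_{k-1}=\pi(x)$ for $k\geq 2$, or $a=\pi(x)$ for $k=1$). The map is a well-defined homomorphism $A_{k,\ell}(a,x)\to A_{k+1,\ell}(a,x)$: since $y_{k-1}\in A$ (or $a\in A$ when $k=1$), the cofibration condition applied to $y_{k-1}$ and $\pi(x)$ gives $d(y_{k-1},x)=d(y_{k-1},\pi(x))+d(\pi(x),x)$, so inserting $\pi(x)$ before $x$ preserves length.

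I would then compute $\partial h+h\partial$ and show that it equals the identity on $A_{k,\ell}(a,x)$ for every $k\geq 2$ and, when $a\neq\pi(x)$, also for $k=1$. For the generic subcase $y_{k-1}\neq \pi(x)$, the term of $\partial h$ obtained by removing the inserted $\pi(x)$ always survives the length-preservation constraint (by the cofibration identity) and contributes $(-1)^k\cdot(-1)^k=1$ times the original tuple. Each other term of $\partial h$---removing some $y_i$ with $1\leq i\leq k-1$---pairs with the corresponding term of $h\partial$ of opposite sign $(-1)^{k+i}$ versus $(-1)^{k-1+i}$, and the two cancel. The delicate spot is $i=k-1$ when $y_{k-2}=\pi(x)$, but then the purported residual tuple contains the consecutive pair $(\pi(x),\pi(x))$ and so vanishes in magnitude chains, on both sides simultaneously.

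A separate but analogous computation handles the subcase $y_{k-1}=\pi(x)$ (with $k\geq 2$), where $h$ itself vanishes on the generator but $h\partial$ still reproduces it: the term of $\partial$ removing $y_{k-1}=\pi(x)$ yields $(a,y_1,\ldots,y_{k-2},x)$---length-preserved by the cofibration condition---and $h$ returns this to the original tuple with the matching sign $(-1)^{k-1}\cdot(-1)^{k-1}=1$. The proof then concludes by invoking the hypothesis. The group $A_{0,\ell}(a,x)$ is always zero, since $a\in A$ while $x\notin A$. If $a\neq\pi(x)$, then $h$ is a chain contraction in every positive degree. Otherwise $a=\pi(x)$, in which case the hypothesis forces $\ell>d(a,x)$ (the triangle inequality rules out $\ell<d(a,x)$), so $A_{1,\ell}(a,x)$ is also zero and $h$ contracts the complex in every degree $\geq 2$. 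Either way, every cycle is a boundary and $A_{\ast,\ell}(a,x)$ is acyclic.

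The main obstacle is the careful sign- and indicator-bookkeeping in the verification of $\partial h+h\partial=\mathrm{id}$: both the length-preservation constraint built into the magnitude-chain differential and the vanishing of tuples with consecutive repeats enter into the count of which terms survive, and one must verify that in every potential failure mode the two mechanisms work in concert to eliminate the residual terms.
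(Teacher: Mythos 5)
Your proposed contraction $h$ is exactly the homotopy $s$ used in the paper's proof (insert $\pi(x)$ before the final entry with sign $(-1)^k$, declared zero when the penultimate entry already equals $\pi(x)$), and your verification of $\partial h + h\partial = \mathrm{id}$ identifies the same delicate points—the $i=k-1$ interaction, the equivalence of the two length-survival conditions via the cofibration identity, and the consecutive-repeat vanishing—that the paper resolves by its three-case analysis. The argument is correct and essentially identical to the paper's, with the low-degree discussion handled in an equivalent (if slightly differently organised) way.
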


\begin{proof}
    Define a map
    \[
    	s\colon A_{\ast,\ell}(a,x)
            \longrightarrow A_{\ast+1,l}(a,x)
    \]
    by
    \[
    	s(x_0,\ldots,x_k) 
    	=
    	\begin{cases}
    		(-1)^k(x_0,\ldots,x_{k-1},\pi(x_k),x_k)
    		& \mathrm{if}\ x_{k-1}\neq\pi(x_k),
    		\\
    		0
    		&
    		\mathrm{if}\ x_{k-1}=\pi(x_k).
    	\end{cases}
    \]
    We claim that $s$ is a chain homotopy from the identity
    of $A_{\ast,\ell}(a,x)$ to the zero map,
    or in other words that
    \begin{equation}\label{chain-homotopy}
        \partial\circ s + s\circ\partial=\mathrm{Id}.
    \end{equation}
    This immediately shows that the homology 
    of $A_{\ast,\ell}(a,x)$ is trivial, and the result
    follows.
    
    Equation~\eqref{chain-homotopy} is equivalent to the claim
    that 
    \begin{equation}\label{chain-homotopy-expanded}
    	\sum_{i=1}^k (-1)^i \partial_i s(x_0,\ldots,x_k)
    	+ \sum_{i=1}^{k-1}(-1)^i s\partial_i(x_0,\ldots,x_k)
    	=(x_0,\ldots,x_k)
    \end{equation}
    for each generator $(x_0,\ldots,x_k)$ of $A_{\ast,\ell}(a,x)$.

    To begin we consider the case $k=1$.
    In this case  the only possible generator is $(a,x)$, 
    but this is only present when $\ell = d(a,x)$.
    So by our assumption that we do not have both
    $a=\pi(x)$ and $\ell = d(a,x)$, we must have $a\neq\pi(x)$.
    Then~\eqref{chain-homotopy-expanded} becomes
    $
        -\partial_1 s(a,x)
        =
        (a,x),
    $ 
    which follows from the definition of $s$. 

    For the remainder of the proof we consider the case $k\geq 2$.
    We have $\partial_i s = -s\partial_i$ for $1\leq i\leq k-2$,
    since then $\partial_i$ affects only the first $k-1$ terms
    of a tuple, while $s$ affects, and depends upon, only the 
    remaining terms.
    So~\eqref{chain-homotopy-expanded} reduces to the claim
    that:
    \begin{equation}\label{chain-homotopy-simplified}
        \begin{split}
    	&(-1)^{k-1}\ \partial_{k-1}s(x_0,\ldots,x_k)
            \\
    	+
    	&(-1)^{k\phantom{-1}}\ \partial_{k\phantom{-1}} s(x_0,\ldots,x_k)
            \\
    	+&(-1)^{k-1}\ s\partial_{k-1}(x_0,\ldots,x_k)
            =
            (x_0,\ldots,x_k)
        \end{split}
    \end{equation}
    We now divide into the following cases:
    \begin{itemize}
        \item
        Assume that $x_{k-1}=\pi(x_k)$.
        
        Here we see immediately that  
        the first two terms of~\eqref{chain-homotopy-simplified} 
        vanish by the definition of $s$.
        For the third term, we have
        \begin{align*}
            (-1)^{k-1}s\partial_{k-1}(x_0,\ldots,x_k)
            &=
            (-1)^{k-1}s(x_0,\ldots,x_{k-2},x_k)
            \\
            &=
            (x_0,\ldots,x_{k-2},\pi(x_k),x_k)
            \\
            &=
            (x_0,\ldots,x_k)
        \end{align*}
        as required.
        The first of these equations holds because 
        $d(x_{k-2},x_{k-1})+d(x_{k-1},x_k) = d(x_{k-2},x_k)$
        by the defining property of $\pi(x_k)$,
        and the second holds since  $x_{k-2}\neq x_{k-1}=\pi(x_k)$.
        
        \item
        Assume that $x_{k-1}\neq \pi(x_k)$
        and
        $d(x_{k-2},x_{k-1})+d(x_{k-1},x_k) > d(x_{k-2},x_k)$.
        
        Applying the defining property of $\pi(x_k)$ to the 
        assumed inequality, we find that we also have 
        $
            d(x_{k-2},x_{k-1})+d(x_{k-1},\pi(x_k))
            > d(x_{k-2},\pi(x_k))
        $
        so that we have $\partial_{k-1}(x_0,\ldots,x_{k-1},\pi(x_k),x_k)=0$
        and the first term of~\eqref{chain-homotopy-simplified}
        vanishes.
        The assumed inequality also shows that 
        $\partial_{k-1}(x_0,\ldots,x_k)=0$
        so that the third term 
        of~\eqref{chain-homotopy-simplified} vanishes.
        It remains to show that the second term 
        of~\eqref{chain-homotopy-simplified} is 
        $(x_0,\ldots,x_k)$. 
        Thanks to the assumption $x_{k-1}\neq \pi(x_k)$
        we have
        \[
            (-1)^k \partial_k s(x_0,\ldots,x_k)
            =
            \partial_k (x_0,\ldots,x_{k-1},\pi(x_k),x_k)
            =
            (x_0,\ldots,x_{k-1},x_k)
        \]
        as required.
        
        \item
        Assume that 
        $x_{k-1}\neq \pi(x_k)$
        and 
        $d(x_{k-2},x_{k-1})+d(x_{k-1},x_k) = d(x_{k-2},x_k)$.
        
        By applying the defining property of $\pi(x_k)$ to
        the assumed equation, we get
        $d(x_{k-2},x_{k-1})+d(x_{k-1},\pi(x_k)) 
        = d(x_{k-2},\pi(x_k))$,
        and in particular $x_{k-2}\neq \pi(x_k)$.
        The first term of~\eqref{chain-homotopy-simplified}
        is given by:
        \begin{align*}
    	(-1)^{k-1}\partial_{k-1}s(x_0,\ldots,x_k)
            &=
    	-\partial_{k-1}(x_0,\ldots,x_{k-1},\pi(x_k),x_k)
            \\
            &=
            -(x_0,\ldots,x_{k-2},\pi(x_k),x_k),
        \end{align*}
        where the first equation used the assumption
        $x_{k-1}\neq\pi(x_k)$, 
        and the second equation used the fact that
        $d(x_{k-2},x_{k-1})+d(x_{k-1},\pi(x_k))
        = d(x_{k-2},\pi(x_k))$ and $x_{k-2}\neq\pi(x_k)$.
        The second term of~\eqref{chain-homotopy-simplified} is 
        \begin{align*}
    	(-1)^k \partial_k s(x_0,\ldots,x_k)
            &=
            \partial_k (x_0,\ldots,x_{k-1},\pi(x_k),x_k)
            \\
            &=
    	(x_0,\ldots,x_{k-1},x_k),
        \end{align*}
        where again in the first equation we used the assumption
        $x_{k-1}\neq\pi(x_k)$, and in the second equation
        we used the defining property of 
        $\pi(x_k)$ to see that
        $d(x_{k-1},\pi(x_k))+d(\pi(x_k),x_k) = d(x_{k-1},x_k)$.
        The third term of~\eqref{chain-homotopy-simplified} is  
        \begin{align*}
    	(-1)^{k-1}s\partial_{k-1}(x_0,\ldots,x_k)
            &=
    	(-1)^{k-1}s(x_0,\ldots,x_{k-2},x_k)
            \\
            &=
    	(x_0,\ldots,x_{k-2},\pi(x_k),x_k)
        \end{align*} 
        where in the first equation we used the assumption
        that
        $d(x_{k-2},x_{k-1})+d(x_{k-1},x_k) = d(x_{k-2},x_k)$,
        and in the second we used the fact $x_{k-2}\neq\pi(x_k)$.
        So altogether, the left hand side 
        of~\eqref{chain-homotopy-simplified} is
        \[
            -(x_0,\ldots,x_{k-2},\pi(x_k),x_k)
            +
    	(x_0,\ldots,x_{k-1},x_k),
            +
    	(x_0,\ldots,x_{k-2},\pi(x_k),x_k)
        \]
        which is precisely $(x_0,\ldots,x_k)$ as required.
    \end{itemize}
    In all three cases above, the equation
    \eqref{chain-homotopy-simplified} holds.
    This completes the proof.
\end{proof}

Now we take a cofibration $A\hookrightarrow X$ 
and study the chain complex
\[
    \frac{\MC_{\ast,\ast}(X)}
    {\MC_{\ast,\ast}(A)+\MC_{\ast,\ast}(X\setminus A)}
\]
This quotient has a basis consisting of the tuples
$(x_0,\ldots,x_k)$ of vertices of $X$ that have finite length
and that do not lie entirely in $A$ or entirely in $X\setminus A$.
Since $A\hookrightarrow X$ is a cofibration,
there are no paths from vertices of $X\setminus A$
into $A$, and so any such tuple satisfies
$x_0,\ldots,x_i\in A$ and $x_{i+1},\ldots,x_k\in X\setminus A$
for some $i$ in the range $0\leq i <k$.

\begin{definition}\label{big-quotient-filtration}
    Let $A\hookrightarrow X$ be a cofibration
    and let $\ell\geq 0$.
    Define a filtration
    \[
        0 = F_0\subseteq F_1\subseteq 
        \cdots\subseteq F_{\ell-1}=
        \frac{\MC_{\ast,\ell}(X)}
        {\MC_{\ast,\ell}(A)+\MC_{\ast,\ell}(X\setminus A)}
    \]
    by setting $F_i$ to be the span of all those tuples
    $(x_0,\ldots,x_k)$ for which
    $x_0,\ldots,x_{k-i}\in A$.
    In other words, the final $i$ entries of a tuple in $F_i$
    are allowed to be outside $A$, but no more.
    
    To see that the $F_i$ are closed under the boundary map,
    observe that deleting an entry of a tuple 
    either preserves or reduces
    the number of its entries that lie in $X\setminus A$.
    And to see that $F_{\ell-1}$ does indeed exhaust 
    our chain complex, observe that an arbitrary tuple 
    $(x_0,\ldots,x_k)$ has at least its final term 
    in $X\setminus A$ and therefore lies in $F_{k-1}$,
    but that we always have $k\leq\ell$.
\end{definition}

\begin{lemma}\label{filtration-quotient-isomorphism-lemma}
    There is an isomorphism
    \[
        F_i/F_{i-1}
        \xrightarrow{\ \cong \ }
        \bigoplus_{a,(z_1,\ldots,z_i)}
        \Sigma^i A_{\ast,\ell-\ell'}(a,z_1)
    \]
    where the direct sum is over all $a\in A$ and all
    tuples $(z_1,\ldots,z_i)$ of elements of $X\setminus A$
    with $\ell(z_1,\ldots,z_i)\leq\ell$,
    and $\ell'$ is shorthand for $\ell(z_1,\ldots,z_i)$.
\end{lemma}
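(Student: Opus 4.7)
The plan is to construct an explicit map $\Phi\colon F_i/F_{i-1}\to\bigoplus_{a,(z_1,\ldots,z_i)}\Sigma^i A_{\ast,\ell-\ell'}(a,z_1)$ on basis elements and verify that it is an isomorphism of chain complexes.

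First I would identify a basis of $F_i^n/F_{i-1}^n$. From \Cref{big-quotient-filtration}, this quotient is spanned by those length-$\ell$ tuples $(x_0,\ldots,x_n)$ with $x_0,\ldots,x_{n-i}\in A$ and $x_{n-i+1}\notin A$. Condition~(1) of \Cref{def:cofib}---no edges from $X\setminus A$ into $A$---together with the path condition on generators of $\RC$, forces $x_{n-i+1},\ldots,x_n\in X\setminus A$. Setting $s=n-i$ and relabelling as $(a_0,\ldots,a_s,z_1,\ldots,z_i)$, the total length splits as $\ell=\ell(a_0,\ldots,a_s,z_1)+\ell'$ with $\ell'=\ell(z_1,\ldots,z_i)$, so the prefix $(a_0,\ldots,a_s,z_1)$ is a generator of $A_{\ast,\ell-\ell'}(a_0,z_1)$. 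I then define $\Phi$ by sending the basis element to this prefix in the summand indexed by $a=a_0$ and $(z_1,\ldots,z_i)$; concatenation provides an inverse on basis vectors, making $\Phi$ an $R$-module isomorphism.

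The substantive step is to check that $\Phi$ intertwines the differentials, up to the sign twist induced by $\Sigma^i$. The magnitude-chain differential on $(a_0,\ldots,a_s,z_1,\ldots,z_i)$ is $\sum_{j=1}^{n-1}(-1)^j\partial_j$, with length-dropping terms discarded. I would split the terms into two families. First, deletions of $a_j$ for $1\le j\le s$: the resulting tuple has $s$ entries of $A$ followed by the unchanged suffix $(z_1,\ldots,z_i)$ and therefore still lies in $F_i\setminus F_{i-1}$; under $\Phi$ it maps to the deletion of $a_j$ from $(a_0,\ldots,a_s,z_1)\in A_{\ast,\ell-\ell'}(a_0,z_1)$, and because the suffix is untouched, length drops on the full tuple occur exactly when they occur on the prefix, so the two differentials omit the same terms with matching signs. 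Second, deletions of any $z_k$: the resulting tuple begins with $a_0,\ldots,a_s$, an initial string of $s+1=(n-1)-(i-1)+1$ elements of $A$, so it lies in $F_{i-1}^{n-1}$ and vanishes in the quotient.

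The main obstacle is a bookkeeping one: confirming that the suspension shift $\Sigma^i$ matches the degree difference between the prefix $(a_0,\ldots,a_s,z_1)$ in $A_{\ast,\ell-\ell'}(a_0,z_1)$ and the full tuple in $F_i/F_{i-1}$, and that the sign conventions attached to $\Sigma^i$ are absorbed consistently across the deletion-of-$a_j$ terms. Once the two observations above are in hand, the combination of a basis-level bijection with matching differentials on each summand gives the stated direct-sum decomposition.
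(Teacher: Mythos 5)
Your proposal is correct and follows essentially the same route as the paper: identify the basis of $F_i/F_{i-1}$ as tuples $(x_0,\ldots,x_{k-i},z_1,\ldots,z_i)$ with prefix in $A$ and suffix in $X\setminus A$, observe that deletions within the suffix land in $F_{i-1}$ while deletions within the prefix reproduce the differential of $A_{\ast,\ell-\ell'}(x_0,z_1)$, and send each generator to its prefix in the summand indexed by $x_0$ and $(z_1,\ldots,z_i)$. The degree/sign bookkeeping you flag is harmless here, since only the quasi-isomorphism type of the filtration quotients is used downstream.
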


\begin{proof}
    Since $F_i$ and $F_{i-1}$ are spans of generators,
    with those for the latter 
    being a subset of those for the former,
    the quotient $F_i/F_{i-1}$ has basis consisting of all those
    generators that lie in $F_i$ but not $F_{i-1}$.
    These are precisely the tuples for which
    $x_0,\ldots,x_{k-i}$ lie in $A$ (so that the tuple is in
    $F_i$), while $x_{k-i+1}$ and all subsequent entries of the
    tuple do not (so that the tuple does not lie in $F_{i-1}$).
    So we may write an arbitrary generator of $F_i/F_{i-1}$ in the
    form $(x_0,\ldots,x_{k-i},z_1,\ldots,z_i)$ 
    where $x_0,\ldots,x_{k-i}\in A$ 
    and $z_1,\ldots,z_i\in X\setminus A$.
    
    The boundary map on $F_i/F_{i-1}$ is, as usual, given by
    the alternating sum of all ways of removing an element from
    a tuple, ignoring any terms for which the length is decreased.
    If, in a tuple $(x_0,\ldots,x_{k-i},z_1,\ldots,z_i)$,
    it is one of the $x_j$ that is removed,
    then we find that we still have the first $k-i-1=(k-1)-i$
    terms in $A$. 
    On the other hand, if one of the $z_j$ is removed, then
    in this new tuple the first $k-i  = (k-1)-(i-1)$ 
    terms lie in $A$, so that the tuple lies in $F_{i-1}$ and 
    therefore vanishes in $F_i/F_{i-1}$.
    From this, we see that the differential of $F_i/F_{i-1}$ is
    given by the alternating sum of all ways to delete one
    of the $x_j$ from a tuple 
    $(x_0,\ldots,x_{k-i},z_1,\ldots, z_i)$, omitting any 
    terms where the length is decreased.
    
    The last two paragraphs now enable us to form the
    required isomorphism
    \[
        F_i/F_{i-1}
        \xrightarrow{\ \cong \ }
        \bigoplus_{a,(y_1,\ldots,y_i)}
        \Sigma^i A_{\ast,\ell-\ell'}(a,y_1)
    \]
    by sending 
    $(x_0,\ldots,x_{k-i},z_1,\ldots,z_i)\in F_i/F_{i-1}$
    to the element  $(x_0,\ldots,x_{k-i},z_1)$
    in the summand indexed by $x_0$ and $(z_1,\ldots,z_i)$.
    The first paragraph shows that this is a bijection on 
    generators, and therefore an isomorphism of 
    graded $R$-modules,
    while the second paragraph shows 
    that it respects the differentials on both sides.
\end{proof}

Suppose now that we are given a pushout diagram
\[\begin{tikzcd}
    A \arrow{r}{f} \arrow[hook,d,"i"'] & Y \arrow{d}{j} \\
    X \arrow[r,"g"'] & X \cup_A Y
\end{tikzcd}\]
in which $i$, and consequently $j$, are cofibrations.
Since the square commutes, 
$g$ sends $A\subseteq X$ into $Y\subseteq X\cup_A Y$.
%NEW
And since (by construction---see Lemma~2.10 of~\cite{CDKOSW}) 
%OLD
%And since (by construction---see Lemma~1.10 of~\cite{CDKOSW}) 
$g$ identifies 
$X\setminus A$ with  $(X\cup_A Y)\setminus Y\subseteq X\cup_A Y$, 
we have an induced map 
\begin{equation}\label{big-quotient-map}
    \frac{\MC_{\ast,\ast}(X)}
    {\MC_{\ast,\ast}(A)+\MC_{\ast,\ast}(X\setminus A)}
    \longrightarrow
    \frac{\MC_{\ast,\ast}(X\cup_A Y)}
    {\MC_{\ast,\ast}(Y)+\MC_{\ast,\ast}(X\cup_A Y \setminus Y)}.
\end{equation}

\begin{lemma}\label{big-quotient-quasi-isomorphism}
    The map~\eqref{big-quotient-map}
    is a quasi-isomorphism.
\end{lemma}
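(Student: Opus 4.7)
The plan is to equip both sides of~\eqref{big-quotient-map} with the filtration of~\Cref{big-quotient-filtration}, check that the map is filtration-preserving, and then reduce to showing that the induced maps on filtration quotients are quasi-isomorphisms. Preservation of the filtration is immediate: since $g$ carries $A$ into $Y$ and restricts to a bijection $X\setminus A\to(X\cup_A Y)\setminus Y$, any generator $(x_0,\ldots,x_k)$ of $F_i$ maps to a tuple whose first $k-i+1$ entries lie in $Y$ and whose last $i$ entries lie in $(X\cup_A Y)\setminus Y$, so the image lies in $F'_i$. Because both filtrations are bounded (each terminates at filtration level $\ell-1$ in length $\ell$), a standard comparison argument---either an induction with the five-lemma, or the spectral sequence comparison theorem---reduces the claim to showing that the induced map $\bar\phi_i\colon F_i/F_{i-1}\to F'_i/F'_{i-1}$ is a quasi-isomorphism for every $i$.

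Applying~\Cref{filtration-quotient-isomorphism-lemma} on each side identifies $F_i/F_{i-1}$ and $F'_i/F'_{i-1}$ with direct sums of suspensions $\Sigma^i A_{\ast,\ell-\ell'}(a,z_1)$ and $\Sigma^i A'_{\ast,\ell-\ell'}(b,w_1)$, indexed by pairs $(a,\bar z)$ with $a\in A$, $\bar z$ a tuple in $X\setminus A$, and pairs $(b,\bar w)$ with $b\in Y$, $\bar w$ a tuple in $(X\cup_A Y)\setminus Y$. Tracing generators through these identifications, the component of $\bar\phi_i$ from the summand $(a,\bar z)$ to the summand $(b,\bar w)$ is zero unless $b=f(a)$ and $\bar w=g(\bar z)$ componentwise, in which case it is the map induced by $f$, with the convention that any image whose length in $X\cup_A Y$ drops below $\ell-\ell'$ is set to zero.

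The main technical step, and the main obstacle, is to describe the projection $\pi'$ for the cofibration $Y\hookrightarrow X\cup_A Y$ in terms of the projection $\pi$ of $A\hookrightarrow X$ and the map $f$. Because there are no edges from $X\setminus A$ into $A$, there are none from $(X\cup_A Y)\setminus Y$ into $Y$, so any path in $X\cup_A Y$ from $y\in Y$ to $v\in(X\cup_A Y)\setminus Y$ decomposes as a path in $Y$ from $y$ to some $f(a)$ followed by the image of a path in $X$ from $a$ to $v$. This gives $d_{X\cup_A Y}(y,v)=\min_{a\in A}\bigl[d_Y(y,f(a))+d_X(a,v)\bigr]$; combining this with the defining property of $\pi(v)$ and the short-map property of $f$, one verifies that the minimum is attained at $a=\pi(v)$, so that
\[
    d_{X\cup_A Y}(y,v)=d_Y\bigl(y,f(\pi(v))\bigr)+d_X(\pi(v),v)
\]
for every $y\in Y$. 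In particular $\pi'(v)=f(\pi(v))$ and $d_{X\cup_A Y}(\pi'(v),v)=d_X(\pi(v),v)$.

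With that formula in place,~\Cref{A-not-acyclic} and~\Cref{A-acyclic} identify the summands with non-trivial homology on each side: on the domain they are those indexed by $(\pi(z_1),\bar z)$ with $\ell-\ell'=d_X(\pi(z_1),z_1)$, and on the codomain by $(f(\pi(w_1)),\bar w)$ with $\ell-\ell'=d_X(\pi(w_1),w_1)$. The bijection $g$ matches these two indexing sets, and on each matched pair $\bar\phi_i$ carries the homology generator $[(\pi(z_1),z_1)]$ to $[(\pi'(g(z_1)),g(z_1))]$, the generator of the corresponding codomain homology. Since the acyclic summands contribute nothing, $\bar\phi_i$ is an isomorphism on homology for each $i$, and this completes the argument.
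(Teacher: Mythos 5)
Your proof follows the same strategy as the paper: equip both sides with the filtration of \Cref{big-quotient-filtration}, reduce to the filtration quotients, identify those via \Cref{filtration-quotient-isomorphism-lemma} with direct sums of the complexes $A_{\ast,\ell}(-,-)$ and $A'_{\ast,\ell}(-,-)$, and invoke \Cref{A-acyclic} and \Cref{A-not-acyclic} to match the non-acyclic summands. You also explicitly establish the identity $\pi' = f\circ\pi$ and the equality $d_{X\cup_A Y}(\pi'(v),v)=d_X(\pi(v),v)$, which the paper tacitly assumes when it claims the non-acyclic codomain summands are $A'_{\ast,d(f(\pi(x)),x)}(f(\pi(x)),x)$; this is a useful clarification of a step the paper glosses over, and your verification (via $d_{X\cup_A Y}(y,v)=\min_a\bigl[d_Y(y,f(a))+d_X(a,v)\bigr]$, the cofibration property of $\pi$, and the fact that $f$ is short) is correct.
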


\begin{proof}
    Let us fix some \(\ell \geq 0\) and restrict to length \(\ell\)---that is, fix the second grading to be \(\ell\).
    
    The domain and codomain of~\eqref{big-quotient-map}
    both admit the filtration of 
    Definition~\ref{big-quotient-filtration}.
    For clarity, we denote the filtration of the domain by
    $\{F_i\}$ and that of the codomain by
    $\{F'_i\}$.
    Similarly, both $A\hookrightarrow X$ 
    and $Y\hookrightarrow X\cup_A Y$
    admit the complexes $A_{\ast,\ell}(-,-)$ of 
    Definition~\ref{definition-A}.
    Again for clarity, we will write the the complexes
    associated with $Y\hookrightarrow X\cup_A Y$ 
    as $A'_{\ast,\ell}(-,-)$.
    
    By construction, the induced map~\eqref{big-quotient-map}
    preserves the filtration, 
    and thus induces maps on filtration quotients
    \begin{equation}\label{filtration-quotient-map}
        F_i/F_{i-1}\longrightarrow F'_i/F'_{i-1}.
    \end{equation}
    Since both filtrations terminate after $\ell-1$ steps,
    to prove the lemma it will be sufficient to prove that each
    map~\eqref{filtration-quotient-map} is a quasi-isomorphism.
    The domain and codomain 
    of~\eqref{filtration-quotient-map}
    both admit the isomorphism of 
    Lemma~\ref{filtration-quotient-isomorphism-lemma},
    so that~\eqref{filtration-quotient-map}
    becomes a map
    \begin{equation}\label{filtration-quotient-map-two}
        \bigoplus_{a,(z_1,\ldots,z_i)}
        \Sigma^i A_{\ast,\ell-\ell'}(a,z_1)
        \longrightarrow
        \bigoplus_{y,(z_1,\ldots,z_i)}
        \Sigma^i A'_{\ast,\ell-\ell'}(y,z_1).
    \end{equation}
    It is straightforward to see---since the map
    $g\colon X\to X\cup_A Y$ identifies
    $X\setminus A$ with $X\cup_A Y \setminus Y$---that this map now sends the summand corresponding to
    $a$ and $(z_1,\ldots,z_i)$ to precisely the summand
    corresponding to $f(a)$ and $(z_1,\ldots,z_i)$,
    and that on this summand it is given by the evident induced
    map
    $A_{\ast,\ell}(a,z_1)\to A'_{\ast,\ell}(f(a),z_1)$.
    Lemma~\ref{A-acyclic} shows that the only summands that do
    not have vanishing homology are those of the form
    $A_{\ast,d(\pi(x),x)}(\pi(x),x)$
    and
    $A'_{\ast,d(f(\pi(x),x)}(f(\pi(x)),x)$,
    and Lemma~\ref{A-not-acyclic} 
    shows that in these cases the induced map
    $A_{\ast,d(\pi(x),x)}(\pi(x),x)
    \to 
    A'_{\ast,d(f(\pi(x)),x}(\pi(x),x)$
    is a quasi-isomorphism.
    This is sufficient to complete the proof.
\end{proof}

\begin{proof}[Proof of Theorem~\ref{excision-all}]
    We wish to prove that the maps 
    $E^r_{\ast,\ast}(X,A)\to E^r_{\ast,\ast}(X\cup_A Y,Y)$
    are isomorphisms for $r\geq 1$.
    These maps are induced by a map of filtered complexes,
    and so it is enough to show that there is a quasi-isomorphism
    between the $E^0$-pages,
    or in other words that the map
    $\MC_{\ast,\ast}(X,A)\to \MC_{\ast,\ast}(X\cup_A Y,Y)$
    is a quasi-isomorphism.
    The domain and codomain contain within them a
    copy of $\MC_{\ast,\ast}(X\setminus A$)
    and $\MC_{\ast,\ast}(X\cup_A Y \setminus Y)$
    respectively,
    and these subcomplexes are identified
    by the map in question.
    It is therefore sufficient to prove that the map
    of quotients
    \[
        \frac{\MC_{\ast,\ast}(X,A)}{\MC_{\ast,\ast}(X\setminus A)}
        \longrightarrow
        \frac{\MC_{\ast,\ast}(X\cup_A Y,Y)}
        {\MC_{\ast,\ast}(X\cup_A Y\setminus Y)}
    \] 
    is a quasi-isomorphism.
    But this map is precisely~\eqref{big-quotient-map},
    which is a quasi-isomorphism by
    Lemma~\ref{big-quotient-quasi-isomorphism}.
    This completes the proof.
\end{proof}
    
%%%%%%%%%%%%%%%%%%%%%%%%%%%%%%%%%%%%%%%%%%%%%%%%%%%%

\section{A cofibration category of directed graphs}\label{sec:cofibration category}

In the drive to develop the formal homotopy theory of directed graphs, results so far have mainly been negative: for various natural notions of weak equivalence, it is known that no model structure can exist \cite{GoyalSanthanam2021, GoyalSanthanam2023, CarranzaKapulkinKim2023}. There has, however, been one positive result. Carranza \textit{et al} \cite{CDKOSW} exhibit a \emph{cofibration category} structure on \(\DiGraph\) for which the weak equivalences are maps inducing isomorphisms on path homology. A cofibration category structure is, roughly speaking, `one half' of a model category structure: it comprises a class of weak equivalences and a class of cofibrations, satisfying axioms that enable the construction of homotopy colimits (but not homotopy limits). This type of structure was first introduced in its dual form (then known as a \emph{category of fibrant objects}) by Brown in 1973 \cite{Brown1973}; for a classical account, see Baues' book \cite{Baues1989}. Various definitions of cofibration categories can now be found in the literature---we adopt the one used in \cite{CDKOSW}.

\begin{definition}[Definition 2.33 in \cite{CDKOSW}]\label{def:cofibration-category}
    A \demph{cofibration category} is a category \(\cl{C}\) together with two distinguished classes of morphisms, the class of \demph{weak equivalences} and the class of \demph{cofibrations}, which satisfy axioms (C1)--(C6) below. An \demph{acyclic cofibration} is a morphism that is both a cofibration and a weak equivalence.
    \begin{itemize}
        \item [(C1)] The class of cofibrations and the class of weak equivalences are each closed under composition, and for every object \(X\) in \(\cl{C}\), the identity morphism \(\mathrm{Id}_X\) is an acyclic cofibration. 
        \item [(C2)] The class of weak equivalences satisfies the \emph{2-out-of-6 property}: given a triple of composable morphisms \(f \colon X \to Y\), \(g\colon Y \to Z\) and \(h\colon Z \to W\), if \(g \circ f\) and \(h \circ g\) are weak equivalences, then so are \(f,g,h\) and \(h \circ g \circ f\).
        \item [(C3)] 
        The category \(\cl{C}\) admits an initial object \(\emptyset\), and every object \(X\) in \(\cl{C}\) is \demph{cofibrant}, meaning that the unique morphism from the initial object to \(X\) is a cofibration.
        \item [(C4)] 
        The category \(\cl{C}\) admits pushouts along cofibrations, and the pushout of an (acyclic) cofibration is an (acyclic) cofibration.
        \item [(C5)] For every object \(X\) in \(\cl{C}\), the codiagonal map \(X \sqcup X \to X\) can be factored as a cofibration followed by a weak equivalence.
        \item [(C6)] The category \(\cl{C}\) admits all small coproducts.
        \item [(C7)] The transfinite composite of (acyclic) cofibrations is again an (acyclic) cofibration.
    \end{itemize}
\end{definition}

Axioms (C1)--(C6) imply that the cofibrations and weak equivalences in a cofibration category \(\cl{C}\) satisfy various properties one would expect to hold in a model category. In particular, every morphism in \(\cl{C}\) can be factored as a cofibration followed by a weak equivalence \cite[p.~421]{Brown1973}, and the pushout of a weak equivalence along a cofibration is a weak equivalence \cite[Lemma I.4.2]{Brown1973}. For further discussion of the definition, we refer the reader to Section 2 of \cite{CDKOSW}. 

%NEW
Theorem 5.1 in \cite{CDKOSW} says that \(\DiGraph\) carries a cofibration category structure in which the cofibrations are those of \Cref{def:cofib} and the weak equivalences are maps inducing isomorphisms on path homology. Equipped with the results of the previous sections, we will prove that that structure has a refinement, in which the weak equivalences are maps inducing isomorphisms on \emph{bigraded} path homology.
%OLD
%Theorem 4.1 in \cite{CDKOSW} says that \(\DiGraph\) carries a cofibration category structure in which the cofibrations are those of \Cref{def:cofib} and the weak equivalences are maps inducing isomorphisms on path homology. Equipped with the results of the previous sections, we will prove that that structure has a refinement, in which the weak equivalences are maps inducing isomorphisms on \emph{bigraded} path homology.

\begin{theorem}[A cofibration category for bigraded path homology]
\label{thm:cofib_cat}
    Fix a ground ring \(R\) which is a P.I.D. The category \(\cn{DiGraph}\) admits a cofibration category structure in which the cofibrations are those in \Cref{def:cofib} and the weak equivalences are morphisms inducing isomorphisms on bigraded path homology.
\end{theorem}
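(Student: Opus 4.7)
The plan is to verify axioms (C1)--(C7) of \Cref{def:cofibration-category}, exploiting the fact that our class of cofibrations is identical to that of~\cite{CDKOSW}. All axioms phrased purely in terms of cofibrations---the cofibration halves of (C1), (C4), and (C7), together with (C3) and (C6)---can be inherited verbatim from Theorem~4.1 of~\cite{CDKOSW}. Closure of weak equivalences under composition (the remainder of (C1)) and the 2-out-of-6 property (C2) follow immediately from the functoriality of $\PH_{\ast,\ast}$, as these properties hold for isomorphisms in any category.

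For the weak-equivalence half of (C4), suppose $i\colon A\hookrightarrow X$ is an acyclic cofibration and $f\colon A\to Y$ is an arbitrary map. The long exact sequence of the pair $(X,A)$ from~\Cref{thm:les-pair}---available because $i$ satisfies the no-entry condition built into the definition of a cofibration---shows that $\PH_{\ast,\ast}(X,A)=0$. Excision (\Cref{excision-all}) then gives $\PH_{\ast,\ast}(X\cup_A Y,Y)=0$, and since $j\colon Y\to X\cup_A Y$ is itself a cofibration by closure under pushout (\cite[Proposition~2.13]{CDKOSW}), the corresponding long exact sequence for the pair $(X\cup_A Y, Y)$ shows that $j$ induces isomorphisms on bigraded path homology. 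For the weak-equivalence half of (C7), consider a transfinite composite of acyclic cofibrations $X_0\hookrightarrow X_1\hookrightarrow\cdots$. Since bigraded path homology is finitary (\Cref{thm:MPSS_filtered_colimits}) and a filtered colimit of isomorphisms is an isomorphism, the map $\PH_{\ast,\ast}(X_0) \to \PH_{\ast,\ast}(\colim_\alpha X_\alpha)$ is an isomorphism.

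The main obstacle is axiom (C5): factoring the codiagonal $X\sqcup X\to X$ as a cofibration followed by a bigraded path homology equivalence. The strategy is to take the factorization already constructed in~\cite{CDKOSW} and upgrade its verification. The key tool is Asao's \Cref{r-homotopy-invariance}: any $1$-homotopy equivalence induces isomorphisms on every page of the MPSS from $E^2$ onwards, hence on bigraded path homology. We expect the cylinder-like object used in~\cite{CDKOSW} to be $1$-homotopy equivalent to $X$, which would immediately close the argument. If that specific construction falls short of the stronger standard, the fallback is to build a bespoke cylinder by gluing two copies of the cone $CX$ of~\Cref{cone-cofibration} along $X$, obtaining an object $\mathrm{Cyl}(X)$ into which $X\sqcup X$ embeds as a cofibration (by an argument parallel to the one in~\Cref{cone-cofibration}) and which admits a projection to $X$ that is a $1$-homotopy equivalence, since $CX$ is itself $1$-contractible.
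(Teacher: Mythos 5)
Your treatment of (C1)--(C4), (C6) and (C7) matches the paper's proof essentially step for step: cofibration halves inherited from \cite{CDKOSW}, weak-equivalence halves from functoriality, the excision-plus-long-exact-sequence argument for (C4), and finitarity for (C7). The genuine divergence is (C5). The paper factors the codiagonal as $X\sqcup X=\partial J\square X\hookrightarrow J\square X\to\bullet\square X=X$ with $J$ the zig-zag $-2\to-1\leftarrow 0\to 1\leftarrow 2$, and then proves the second map is a weak equivalence via the K\"unneth theorem for bigraded path homology (\Cref{lem:contractible_kunneth}), using that $J$ is $1$-contractible \emph{and diagonal} so that the relevant $\tor$ terms vanish over a P.I.D. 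You instead invoke Asao's $1$-homotopy invariance (\Cref{r-homotopy-invariance}). That route does work and is arguably lighter: $J$ is $1$-contractible via the zig-zag $\mathrm{Id}_J\rightsquigarrow_1 r\,{\leftsquigarrow}_1\, c_0$ (where $r$ collapses $\pm 2$ to $\pm 1$ and $c_0$ is constant at $0$), and since $d_{J\square X}((j,x),(j',x))=d_J(j,j')$ by \eqref{eq:box_ell1}, boxing each map in the zig-zag with $\mathrm{Id}_X$ preserves the $1$-homotopies; hence $t\square\mathrm{Id}_X$ and the inclusion of $\{0\}\square X$ form a $1$-homotopy equivalence and induce mutually inverse isomorphisms on $E^2$. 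This avoids the K\"unneth theorem and the diagonality check entirely, at the cost of relying on Asao's theorem. You should, however, actually carry out this verification rather than stating that you ``expect'' it.

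Two cautions. First, the cofibration half of (C5) is not automatic: you still need \cite[Proposition~2.12]{CDKOSW} (box products of cofibrations are cofibrations) to see that $\partial J\square X\hookrightarrow J\square X$ is a cofibration. Second, your fallback construction is mis-stated: gluing two copies of $CX$ \emph{along $X$} identifies the two base copies of $X$, so $X\sqcup X$ does not embed in the result and the codiagonal cannot factor through it as required. The correct gluing is along the cone points $+1$, which (up to collapsing the middle slice) reproduces $J\square X$; so the fallback collapses into the primary construction rather than providing an independent alternative.
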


Before proving the theorem, we record a remark, an example, and two auxiliary statements that will be required for the proof.

\begin{remark}\label{rmk:compare-cofibration-categories}
    The maps that induce isomorphisms on bigraded path homology
    are strictly finer than those that induce isomorphisms
    on ordinary path homology, as the examples
    of Section~\ref{sec:directed_cycles} demonstrate.
    Consider the directed cycles
    $Z_m$, for $m\geq 3$ (see~\Cref{fig:cycles}).
    \Cref{theorem-directed-cycles-omnibus} shows that 
    for each $i\geq 0$ there is exactly one $j\geq 0$ for which
    $\PH_{i,j}(Z_m)$ is nonzero, 
    and the first three cases are 
    $\PH_{0,0}(Z_m)$, $\PH_{1,1}(Z_m)$ and $\PH_{2,m}(Z_m)$.
    The first two of these are precisely the (ordinary) path
    homology groups $\PH_0(Z_m)$ and $\PH_1(Z_m)$,
    and are the only nonzero path homology groups.
    On the other hand, the third, $\PH_{2,m}(Z_m)$,
    is not an ordinary path homology group at all.
    Now take $n>m\geq 3$ and consider any map
    \[
        Z_n\longrightarrow Z_m
    \]
    that contracts precisely $n-m$ edges.
    This does \emph{not} induce an isomorphism 
    of bigraded path homology groups, 
    because $\PH_{2,m}(Z_n)$ vanishes while $\PH_{2,m}(Z_m)$
    does not.
    On the other hand, it \emph{does} induce an isomorphism 
    on ordinary path homology groups.
    (The latter can be seen by direct computation.
    Tracking through the proof of~\Cref{bph-Zm} shows that
    $\PH_{0,0}(Z_m)$ is a single copy of $R$
    represented by the reachability chain $(v)$ for any vertex
    $v\in V(Z_m)$, 
    while $\PH_{1,1}(Z_m)$ is again a single copy of $R$
    represented by the reachability chain 
    $\sum_{(a,b)\in E(Z_m)}(a,b)$,
    and similarly for $Z_n$.
    These representatives are preserved by the map $Z_n\to Z_m$.)
\end{remark}

The proof of \Cref{thm:cofib_cat} follows the structure of the proof of Theorem 5.1 in \cite{CDKOSW} closely, and makes use of several facts established in that paper. It also depends on the K\"unneth theorem and the excision theorem for bigraded path homology proved in Sections \ref{sec:kunneth} and \ref{sec:BPH_MV} of this paper, and on the fact that bigraded path homology is a \emph{finitary} functor on the category of directed graphs: it preserves filtered colimits.

\begin{proposition}
\label{thm:MPSS_filtered_colimits}
    Fix a commutative ground ring \(R\). For each \(r \geq 1\) and every \(p,q \in \mathbbZ\), the functor 
    \(\MPSS^r_{pq}(-)\colon \cn{DiGraph} \to \cn{Mod}_R\)
    preserves filtered colimits. In particular this holds for magnitude homology and bigraded path homology.
\end{proposition}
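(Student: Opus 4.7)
My plan has three ingredients: a formal reduction from higher pages of the MPSS to the zeroth page, a further reduction to the underlying filtration pieces of the reachability chain complex, and a finiteness-of-witnesses argument for those pieces.

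For the first step, each term $E^r_{pq}(G)$ is expressed by the classical spectral-sequence formulas as a subquotient of $\RC_\ast(G)$ assembled from finitely many of the modules $F_\ell\RC_k(G)$ via kernels, images, preimages, intersections, sums, and quotients. Filtered colimits in $\cn{Mod}_R$ are exact and commute with all of these operations (the limit-type operations because filtered colimits commute with finite limits in $\cn{Mod}_R$, the colimit-type operations for formal reasons). This reduces the theorem to the statement that, for each pair $(k,\ell)$, the functor $F_\ell\RC_k(-)\colon \cn{DiGraph}\to \cn{Mod}_R$ preserves filtered colimits.

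For the second step, $F_\ell\RC_k(G)$ is the free $R$-module on the set $S_{k,\ell}(G)$ of tuples $(x_0,\ldots,x_k)$ of consecutively distinct vertices of $G$ satisfying $\ell(x_0,\ldots,x_k)\leq\ell$. Since the free-module functor preserves all colimits, it suffices to show that $S_{k,\ell}(-)\colon \cn{DiGraph}\to \cn{Set}$ preserves filtered colimits. The idea is a finiteness-of-witnesses principle: an element of $S_{k,\ell}(G)$ is determined by a finite amount of data, namely the $k+1$ vertices together with one choice of minimising directed path between each consecutive pair. Given a filtered diagram $G = \colim_i G_i$ in $\cn{DiGraph}$, this data lifts to some sufficiently far stage $G_j$; at that stage the $G_j$-length of the tuple is realised by the chosen paths and so equals its $G$-length, giving surjectivity of $\colim_i S_{k,\ell}(G_i) \to S_{k,\ell}(G)$. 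Injectivity then follows from the fact that filtered colimits in $\cn{Set}$ commute with finite diagrams (so any equality witnessed in $G$ is witnessed at a finite stage).

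The main obstacle is the non-monotonic behaviour of distances under transition maps: new identifications in a colimit of digraphs can create shortcuts, so the length of a tuple in $G_i$ may strictly exceed its length in $G$. The finiteness-of-witnesses argument above addresses this, but to make it uniform and transparent we plan to factor $\RC_\ast$ through the category $\NMet$ of generalised $\mathbb{N}\cup\{\infty\}$-valued metric spaces via the shortest-path functor $M\colon \cn{DiGraph}\to \NMet$. One checks that $M$ preserves filtered colimits---a filtered colimit in $\NMet$ has underlying set the colimit of underlying sets, with distance computed by an infimum that is realised at some finite stage---and that the MPSS is defined on $\NMet$ by the same length-filtered reachability construction. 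Transplanted into $\NMet$, the finitariness of each $F_\ell\RC_k$ is a clean consequence of the fact that every generator involves only finitely many points and finite distance data, completing the proof.
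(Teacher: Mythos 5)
Your proposal is correct and follows the same overall route as the paper: reduce to the first page (homology commutes with filtered colimits in $\cn{Mod}_R$), reduce to the $\cn{Set}$-valued filtration pieces of the nerve via the free-module functor, factor through $\NMet$, and exploit the fact that distances in a filtered colimit of $\mathbb{N}\cup\{\infty\}$-valued metric spaces are attained at a finite stage. Where you genuinely diverge is in how the key finiteness step is packaged: you run a direct element-level lifting argument (``finiteness of witnesses''), whereas the paper exhibits $F_\ell\Nv_k(-)$ as a colimit of representable functors $\Hom(\mathbf{d},-)$ indexed by ``straight paths'' $\mathbf{d}$, proves those representing objects are finitely presentable in $\NMet$, and concludes because colimits commute with colimits. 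Your version is more elementary; the paper's buys a reusable structural description of the filtered nerve and makes transparent exactly why the argument fails over $\cn{Met}$ (there the infimum defining colimit distances need not be attained). One small repair your write-up needs: the set $S_{k,\ell}(-)$ of consecutively distinct tuples is not functorial on $\cn{DiGraph}$ (a graph map can collapse consecutive entries), so $F_\ell\RC_k(-)$ is not literally the free module on a $\cn{Set}$-valued functor. Work instead with the unnormalized complex spanned by all tuples of $F_\ell\Nv_k(-)$, which is functorial and free on a $\cn{Set}$-valued functor, and note that its homology computes the same magnitude homology; this is exactly the adjustment the paper makes in its final step.
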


%NEW
\Cref{thm:MPSS_filtered_colimits} is proved as Proposition 1.14 in~\cite{DIMZ}. Before proceeding to the proof of \Cref{thm:cofib_cat}, let us consider an application of the Proposition.
%OLD
%\Cref{thm:MPSS_filtered_colimits} is proved as Proposition 1.14 in~\cite{DIMZ}; for the interested reader, we give an alternative and more detailed proof in \Cref{sec:finitary}. Before proceeding to the proof of \Cref{thm:cofib_cat}, let us consider an application of the Proposition.

\begin{example}[An infinite sphere]
\label{eg:infinite_sphere}
    Recall from \Cref{eg:spheres} the `sphere'-like directed graphs \(\mathbb{S}^n\). By construction, each \(\mathbb{S}^n\) is the suspension of \(\mathbb{S}^{n-1}\); in particular, \(\mathbb{S}^{n-1}\) includes into \(\mathbb{S}^{n}\), for each \(n > 0\). This sequence of inclusions gives a filtered diagram
    \begin{equation}
    \label{eq:infinite_sphere}
        \mathbb{S}^0 \to \mathbb{S}^{1} \to \mathbb{S}^2 \to \cdots
    \end{equation}
    in \(\DiGraph\), whose colimit we denote by
    \[\mathbb{S}^\infty = \mathrm{colim}_{\mathbb{N}} \mathbb{S}^n.\]
    We can apply \Cref{thm:MPSS_filtered_colimits} to see that the bigraded path homology of \(\mathbb{S}^\infty\) is
    \[
    \PH_{k,\ell}(\mathbb{S}^\infty) = \begin{cases}
    R & \text{if } k = \ell = 0 \\
    0 &\text{otherwise,}
    \end{cases}
    \]
    analogous to the vanishing in positive dimensions of the singular homology of the infinite topological sphere.

    Indeed, we saw in~\Cref{eg:spheres} that the bigraded path
    homology of $\mathbb{S}^n$ consists of two copies of $R$,
    one in bidegree $(0,0)$ and a second in bidegree $(n,n)$.
    Thus $\PH_{0,0}(\mathbb{S}^n)$ is a copy of $R$ 
    for all $n\geq 1$, and the maps $\mathbb{S}^{n-1}\to \mathbb{S}^n$ induce the identity on this, so that we obtain
    $\PH_{0,0}(\colim_\mathbb{N}\mathbb{S}^n)=
    \colim_\mathbb{N}\PH_{0,0}(\mathbb{S}^n)=R$.
    And in any other bidegree $(i,j)$ there is at most one value
    of $n$ for which $\PH_{i,j}(\mathbb{S}^n)$ is nonzero,
    so that the induced maps 
    $\PH_{i,j}(\mathbb{S}^{n-1})\to \PH_{i,j}(\mathbb{S}^n)$
    all vanish, and 
    $\PH_{i,j}(\colim_\mathbb{N}\mathbb{S}^n)=
    \colim_\mathbb{N}\PH_{i,j}(\colim_n\mathbb{S}^n)=0$.
\end{example}

We require one more lemma. To state it, we recall that a directed graph \(G\) is called \emph{diagonal} if \(\MH_{k\ell}(G) = 0\) whenever \(k \neq \ell\). (This definition was first made for undirected graphs in \cite{HepworthWillerton2017}, and is used in the context of directed graphs in \cite{Asao-path}, for instance.) Observe that the terminal directed graph \(\bullet\) is also the unit object for the box product: for every directed graph \(H\), we have \(\bullet \square H \cong H \cong H \square \bullet\).

\begin{lemma}
\label{lem:contractible_kunneth}
    Fix a ground ring \(R\) which is a P.I.D. Suppose \(G\) is 1-contractible and diagonal. Let \(t\colon G \to \bullet\) denote the terminal map. Then, for any directed graph \(H\), the map of graphs \(t \square \mathrm{Id}\colon G \square H \to H\) induces an isomorphism on bigraded path homology.
\end{lemma}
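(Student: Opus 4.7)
The plan is to combine the bigraded K\"unneth theorem (\Cref{thm:kunneth_box_bigrad}) with 1-homotopy invariance, exploiting the trivial identification $\bullet \square H = H$ at the end.

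First, I will compute $\PH_{\ast,\ast}(G)$ using 1-contractibility. The terminal map $t\colon G \to \bullet$ is part of a 1-homotopy equivalence, so applying Asao's \Cref{r-homotopy-invariance} (at $s=2$) to each step in the defining zig-zag of 1-homotopies shows that the induced map $t_*\colon \PH_{\ast,\ast}(G) \to \PH_{\ast,\ast}(\bullet)$ is an isomorphism. In particular $\PH_{\ast,\ast}(G)$ is a single copy of $R$ concentrated in bidegree $(0,0)$, and hence flat as a bigraded $R$-module.

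Next, I will apply the K\"unneth theorem for bigraded path homology to $G$ and to $\bullet$ separately. Diagonality of $G$ forces $\MH_{k,\ell}(G)=0$ for $k \neq \ell$, and by \Cref{lem:diag_free} the diagonal groups $\MH_{k,k}(G)$ are always free over $R$; thus $\MH(G)$ is flat, so the hypothesis of \Cref{thm:kunneth_box_bigrad} is satisfied. The resulting short exact sequence has a Tor term in which every nonzero $G$-argument lies in bidegree $(0,0)$, where $\PH_{0,0}(G) = R$ is flat, so that term vanishes and $\nabla^2_G\colon \PH(G) \otimes \PH(H) \to \PH(G \square H)$ is an isomorphism. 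Applied to the one-vertex graph, the theorem trivially gives the K\"unneth isomorphism $\nabla^2_\bullet\colon \PH(\bullet) \otimes \PH(H) \xrightarrow{\cong} \PH(H)$.

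Finally, I will conclude by naturality of $\nabla^2$ in the first variable, which is available here because both $G$ and $\bullet$ satisfy the flatness hypothesis of \Cref{thm:kunneth_box_bigrad}. The naturality square for $t$ has the two K\"unneth maps as its horizontals, $t_* \otimes \mathrm{Id}$ as its left vertical, and $(t \square \mathrm{Id})_*$ as its right vertical. The horizontals are isomorphisms by the previous step, and the left vertical is an isomorphism because $t_*$ is, so the right vertical $(t \square \mathrm{Id})_*$ is an isomorphism as well. The only point requiring care---and the place where both hypotheses on $G$ pull their weight---is that diagonality is needed to invoke the K\"unneth theorem at all, while 1-contractibility is needed to concentrate $\PH_{\ast,\ast}(G)$ in bidegree $(0,0)$ so that the Tor term vanishes and the K\"unneth map is a genuine isomorphism rather than merely an injection.
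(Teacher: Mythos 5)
Your proposal is correct and follows essentially the same route as the paper's proof: diagonality plus \Cref{lem:diag_free} gives flatness of \(\MH(G)\) so that \Cref{thm:kunneth_box_bigrad} applies, 1-contractibility concentrates \(\PH_{\ast,\ast}(G)\) in bidegree \((0,0)\) so the \(\tor\) term vanishes, and the conclusion follows from naturality of the K\"unneth isomorphism applied to \(t\colon G\to\bullet\). Your explicit checks (that \(\bullet\) also satisfies the flatness hypothesis, and that the concentration of \(\PH_{\ast,\ast}(G)\) comes from \Cref{r-homotopy-invariance}) are details the paper leaves implicit but adds nothing structurally different.
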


\begin{proof}
    By \Cref{lem:diag_free}, the assumption that \(G\) is diagonal implies that its magnitude homology is free. We can therefore apply the K\"unneth theorem for bigraded path homology (\Cref{thm:kunneth_box_bigrad}) to get, for every \(H\), a short exact sequence involving \(\PH(G) \otimes \PH(H)\) and \(\PH(G \square H)\) and a \(\tor\) term. The assumption that \(G\) is 1-contractible implies that its bigraded path homology is concentrated in bidegree \((0,0)\), where it is a single copy of the ground ring \(R\). Thus, the \(\tor\) term vanishes, leaving the isomorphism \(\alpha\colon \PH(G) \otimes \PH(H) \xto{\cong} \PH(G \square H)\). 
    The naturality of that isomorphism tells us that this square commutes:
    \[
    \begin{tikzcd}
      \PH(G) \otimes \PH(H) \arrow{r}{\cong} \arrow[swap]{d}{\PH(t) \otimes \PH(\mathrm{Id})} & \PH(G \square H) \arrow{d}{\PH(t \square \mathrm{Id})} \\
      I \otimes \PH(H) \arrow{r}{\cong} & \PH(H)
    \end{tikzcd}
    \]
    where \(I = \PH(\bullet)\) is is the bigraded module given by a single copy of \(R\) in bidegree \((0,0)\). Since the left leg is an isomorphism, the right leg is an isomorphism too.
\end{proof}

\begin{proof}[Proof of \Cref{thm:cofib_cat}]
First, recall that \(\cn{DiGraph}\) is cocomplete.
So axiom (C6) certainly holds, and in particular \(\cn{DiGraph}\) admits an initial object (the empty graph) and pushouts along cofibrations.

%NEW
That the class of cofibrations is closed under composition and contains all identities is proved as Proposition~3.12 in \cite{CDKOSW}; the corresponding fact for weak equivalences follows immediately from the functoriality of bigraded path homology. This establishes axiom (C1). Axiom (C2) is immediate from the corresponding fact for isomorphisms and the functoriality of bigraded path homology. Axiom (C3)---that all directed graphs are cofibrant---follows immediately from the definitions.
%OLD
%That the class of cofibrations is closed under composition and contains all identities is proved as Proposition 2.10 in \cite{CDKOSW}; the corresponding fact for weak equivalences follows immediately from the functoriality of bigraded path homology. This establishes axiom (C1). Axiom (C2) is immediate from the corresponding fact for isomorphisms and the functoriality of bigraded path homology. Axiom (C3)---that all directed graphs are cofibrant---follows immediately from the definitions.

For axiom (C4), consider a pushout diagram
\[\begin{tikzcd}
    A \arrow{r}{f} \arrow[hook,swap]{d}{i} & Y \arrow{d}{j} \\
    X \arrow[swap]{r}{g} & X \cup_A Y
\end{tikzcd}\]
%NEW
in which \(i\) is a cofibration. Then \(j\) is also a cofibration, by Proposition~3.15 in \cite{CDKOSW}. In particular, our \Cref{thm:les-pair} applies to both \((X,A)\) and \((X \cup_A Y, Y)\), giving a long exact sequence on bigraded path homology in both cases. If \(i\) is an \emph{acyclic} cofibration, so \(\PH_{\ast,\ast}(A) \cong \PH_{\ast,\ast}(X)\), then the long exact sequence of the pair \((X,A)\) tells us the relative homology \(\PH_{\ast,\ast}(X,A)\) vanishes. The excision theorem (\Cref{excision-all}) says that \(\PH_{\ast,\ast}(X \cup_A Y, X) \cong \PH_{\ast,\ast}(X,A) = 0\) and it follows, by considering the long exact sequence of the pair \((X \cup_A Y, Y)\), that \(\PH_{\ast,\ast}(X \cup_A Y) \cong \PH_{\ast,\ast}(Y)\). This establishes (C4).
%OLD
%in which \(i\) is a cofibration. Then \(j\) is also a cofibration, by Proposition 2.13 in \cite{CDKOSW}. In particular, our \Cref{thm:les-pair} applies to both \((X,A)\) and \((X \cup_A Y, Y)\), giving a long exact sequence on bigraded path homology in both cases. If \(i\) is an \emph{acyclic} cofibration, so \(\PH_{\ast,\ast}(A) \cong \PH_{\ast,\ast}(X)\), then the long exact sequence of the pair \((X,A)\) tells us the relative homology \(\PH_{\ast,\ast}(X,A)\) vanishes. The excision theorem (\Cref{excision-all}) says that \(\PH_{\ast,\ast}(X \cup_A Y, X) \cong \PH_{\ast,\ast}(X,A) = 0\) and it follows, by considering the long exact sequence of the pair \((X \cup_A Y, Y)\), that \(\PH_{\ast,\ast}(X \cup_A Y) \cong \PH_{\ast,\ast}(Y)\). This establishes (C4).

To prove that axiom (C5) holds, let \(J\) denote the directed graph
\[
\begin{tikzcd}
    -2 & -1 & 0 & 1 & 2 \\ [-20]
    \bullet \arrow{r} & \bullet & \bullet \arrow{l} \arrow{r} & \bullet & \bullet \arrow{l} 
\end{tikzcd}
\]
and let \(\partial J\) denote the subgraph consisting of just the vertices labelled -2 and 2. Then the inclusion \(\iota\colon \partial J \to J\) is a cofibration, and, for every \(X\), the codiagonal map \(X \sqcup X \to X\) factors as
\begin{equation}
\label{eq:codiag}
    X \sqcup X = \partial J \square X \xto{\iota \square \mathrm{Id}_X} J \square X \xto{t \square \mathrm{Id}_X} \bullet \: \square X = X
\end{equation}
where \(t\) is the terminal map.
%NEW
The first morphism here is a cofibration by Proposition~3.14 in \cite{CDKOSW}, which says that the box product of cofibrations is a cofibration. To see that the second morphism is a weak equivalence, observe that \(J\) is both 1-contractible and diagonal. (Since it contains no paths of length greater than 1, its magnitude chains are concentrated in bidegrees \((0,0)\) and \((1,1)\).) It follows from \Cref{lem:contractible_kunneth}, then, that \(t \square \mathrm{Id}_X\) is a weak equivalence. This establishes (C5).
%OLD
%The first morphism here is a cofibration by Proposition 2.12 in \cite{CDKOSW}, which says that the box product of cofibrations is a cofibration. To see that the second morphism is a weak equivalence, observe that \(J\) is both 1-contractible and diagonal. (Since it contains no paths of length greater than 1, its magnitude chains are concentrated in bidegrees \((0,0)\) and \((1,1)\).) It follows from \Cref{lem:contractible_kunneth}, then, that \(t \square \mathrm{Id}_X\) is a weak equivalence. This establishes (C5).

%NEW
That the transfinite composite of cofibrations is again a cofibration is proved as Proposition~3.18 in \cite{CDKOSW}; that the transfinite composite of weak equivalences is a weak equivalence follows from \Cref{thm:MPSS_filtered_colimits} together with the fact that a transfinite composite of isomorphisms is an isomorphism. 
%OLD
%That the transfinite composite of cofibrations is again a cofibration is proved as Proposition 2.16 in \cite{CDKOSW}; that the transfinite composite of weak equivalences is a weak equivalence follows from \Cref{thm:MPSS_filtered_colimits} together with the fact that a transfinite composite of isomorphisms is an isomorphism. 
This establishes (C7).
\end{proof}

%%%%%%%%%%%%%%%%%%%%%%%%%%%%%%%%%%%%%%%%%%%%%%%%%%%%

\section{Directed cycles}\label{sec:directed_cycles}

In this section we will explore in detail the magnitude-path spectral 
sequence of the directed cycles.
These examples, together with those in~\Cref{sec:mn_cycles}, 
will clearly demonstrate the strength of the  bigraded theory.

\begin{definition}
    For $m\geq 1$,
    let $Z_m$ denote the \emph{directed cycle of length
    $m$}, i.e.~a graph with $m$ cyclically ordered vertices,
    and with a single directed edge between adjacent vertices, 
    consistently oriented:
    \[
        \begin{tikzpicture}[baseline=0]
            \node (a) at (0:0) {$\bullet$};
            \node[below] () at (0:0) {$Z_1$};
        \end{tikzpicture}
        \qquad
        \begin{tikzpicture}[baseline=0]
            \node (a) at (0:1.2) {$\bullet$};
            \node (b) at (180:1.2) {$\bullet$};
    
            \draw[stealth-] (a) to [out=north west,in=north east] (b);
            \draw[stealth-] (b) to [out= south east, in=south west] (a);
    
            \node () at (0:0) {$Z_2$};
        \end{tikzpicture}
        \qquad
        \begin{tikzpicture}[baseline=0]
            \node (a) at (0:1.2) {$\bullet$};
            \node (b) at (120:1.2) {$\bullet$};
            \node (c) at (240:1.2) {$\bullet$};
    
            \draw[stealth-] (a) -- (b);
            \draw[stealth-] (b) -- (c);
            \draw[stealth-] (c) -- (a);
    
            \node () at (0:0) {$Z_3$};
        \end{tikzpicture}
        \qquad
        \begin{tikzpicture}[baseline=0]
            \node (a) at (0:1.2) {$\bullet$};
            \node (b) at (90:1.2) {$\bullet$};
            \node (c) at (180:1.2) {$\bullet$};
            \node (d) at (270:1.2) {$\bullet$};
    
            \draw[stealth-] (a) -- (b);
            \draw[stealth-] (b) -- (c);
            \draw[stealth-] (c) -- (d);
            \draw[stealth-] (d) -- (a);
    
            \node () at (0:0) {$Z_4$};
        \end{tikzpicture}
        \qquad
        \begin{tikzpicture}[baseline=0]
            \node (a) at (0:1.2) {$\bullet$};
            \node (b) at (72:1.2) {$\bullet$};
            \node (c) at (144:1.2) {$\bullet$};
            \node (d) at (216:1.2) {$\bullet$};
            \node (e) at (288:1.2) {$\bullet$};
    
            \draw[stealth-] (a) -- (b);
            \draw[stealth-] (b) -- (c);
            \draw[stealth-] (c) -- (d);
            \draw[stealth-] (d) -- (e);
            \draw[stealth-] (e) -- (a);
    
            \node () at (0:0) {$Z_5$};
        \end{tikzpicture}
        \]
\end{definition}

Observe that each $Z_m$ has diameter $m-1$, so that 
all $Z_m$ are long homotopy equivalent to one another 
and to the singleton, and in particular they all
have trivial reachability homology.
In contrast, when we consider $1$-homotopy, it is not difficult
to see that for \(m \geq 2\) no two of the $Z_m$ are $1$-homotopy equivalent
(when $m<n$ the only maps $Z_m\to Z_n$ are constant, and for $n>2$
the constant maps $Z_n\to Z_n$ are only $1$-homotopic to other 
constant maps).
Nevertheless, for $m\geq 3$ the $Z_m$ all have the same path homology,
namely a single copy of $R$ in degrees $0$ and $1$.
So path homology cannot distinguish 
the different oriented cycles.
In contrast, we will see that \emph{bigraded} path homology 
can distinguish all except for $Z_1$ and $Z_2$.

\begin{theorem}[Magnitude homology, bigraded path homology, and MPSS of directed cycles]
\label{theorem-directed-cycles-omnibus}
    Let $m\geq 3$.
    Then the magnitude homology $\MH_{\ast,\ast}(Z_m)$
    and  bigraded path homology $\PH_{\ast,\ast}(Z_m)$
    are both concentrated in bidegrees of the form 
    $(2i,mi)$ and $(2i+1,mi+1)$, in each of which they are free of rank $m$
    and rank $1$ respectively.
    Moreover, the MPSS of $Z_m$ satisfies  $E^2(Z_m)=\cdots=E^{m-1}(Z_m)$
    while $E^m(Z_m)$ is trivial, consisting of a single copy of $R$ in
    bidegree $(0,0)$.
\end{theorem}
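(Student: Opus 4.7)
The plan is to work through each page of the MPSS in turn, exploiting the cyclic symmetry of $Z_m$. Because the differential on $\MC_{\ast,\ast}$ preserves both the initial and terminal vertex of a tuple, and because for fixed $\ell$ the terminal vertex of a tuple starting at $v$ is determined by $v$ alone, the complex $\MC_{\ast,\ell}(Z_m)$ decomposes as a direct sum of $m$ copies of a single based complex $C^{(\ell)}_\ast$. The module $C^{(\ell)}_k$ is free on the compositions $(d_1, \ldots, d_k)$ of $\ell$ into parts lying in $\{1, \ldots, m-1\}$, and the differential sums over the ways to merge two adjacent parts $d_i, d_{i+1}$ whose sum is still at most $m - 1$.

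First, I would compute the homology of $C^{(\ell)}_\ast$, showing it equals $R$ exactly when $(k,\ell) \in \{(2i, mi),\,(2i+1, mi+1) : i \geq 0\}$ and is zero otherwise. This is the main combinatorial work. One avenue is a direct chain-contraction argument in the spirit of \Cref{A-acyclic}; another is to compare $C^{(\ell)}_\ast$ with the unrestricted composition complex (arising from the magnitude chains of the infinite directed ray), which is quasi-isomorphic to the augmented chain complex of a simplex, and then use the resulting long exact sequence. Along the way I would record explicit cycle representatives in the surviving bidegrees---essentially alternating sums of ``wound'' tuples that traverse the cycle $i$ times---because these will be needed in the next step. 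Summing over the $m$ starting vertices then gives $\MH_{k,\ell}(Z_m) \cong R^m$ in each supporting bidegree.

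Next, using the recipe for $d^1$ given in \Cref{sec:MPS}, I would compute the map $d^1\colon \MH_{2i+1, mi+1}(Z_m) \to \MH_{2i, mi}(Z_m)$. On the cycle representatives from Step 1, I expect this to act, after identifying both source and target with $R^m$ indexed by starting vertices, as a signed version of the difference between the identity and the cyclic shift on $\mathbb{Z}/m$. Such a map has rank exactly $m - 1$, so both its kernel and its cokernel are isomorphic to $R$; hence $\PH_{k,\ell}(Z_m) = E^2_{\ell,\,k-\ell}(Z_m) \cong R$ in each supporting bidegree and is zero otherwise.

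Finally, the nonzero entries of $E^2$ sit at $(i,j) = (mi',\,i'(2-m))$ and $(mi'+1,\,i'(2-m))$ for $i' \geq 0$. A congruence check modulo $m$ shows that for $2 \leq r \leq m-2$, and also for $r \geq m$, the shift $(i,j) \mapsto (i-r,\,j+r-1)$ lands outside this set, so $d^r$ is zero on every nonzero entry of $E^r$; hence $E^2 = \cdots = E^{m-1}$ and $E^m = E^\infty$. The MPSS converges to $\RH_\ast(Z_m)$, and since $Z_m$ has diameter $m - 1$ it is long-contractible, so $\RH_\ast(Z_m)$ is $R$ concentrated in degree zero; thus $E^\infty$ is a single copy of $R$ at $(0,0)$. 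A rank comparison then forces $d^{m-1}$ to be an isomorphism on each pair $(mi',\,i'(2-m)) \to (m(i'-1)+1,\,(i'-1)(2-m))$ with $i' \geq 1$, which kills every supporting bidegree of $E^{m-1}$ except $(0,0)$. The principal obstacle is the combinatorial computation in Step 1; Steps 2 and 3 then reduce to explicit matrix calculations and congruence bookkeeping.
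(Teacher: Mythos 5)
Your proposal is correct and follows essentially the same route as the paper: your based complex $C^{(\ell)}_\ast$ is exactly the paper's complex $\OP_\ast(\ell,m)$ of ordered partitions with upper bound (whose homology the paper computes by an inductive chain-contraction argument, one of the two avenues you suggest), your $d^1$ is the paper's $d^1(\lambda^i_e)=\kappa^i_y-\kappa^i_x$, and the endgame via sparsity of $E^2$ plus triviality of $\RH_\ast(Z_m)$ is identical. The only unfinished business is the combinatorial computation of $H_\ast(C^{(\ell)}_\ast)$, which you correctly identify as the main work and for which your proposed contraction strategy is the one that succeeds.
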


In the case $m=1$ the magnitude and bigraded path homology of $Z_1$ are
both trivial, consisting of a single copy of $R$ in bidegree $(0,0)$.
In the case $m=2$ the description of the magnitude homology
given in the theorem remains true,
while the bigraded path homology becomes trivial.

\begin{corollary}
    The \(E^r\)-page of the MPSS distinguishes all of the directed cycles \(Z_m\) for \(m \geq r\), and is trivial for \(m \leq r\). In particular, bigraded path homology distinguishes all the directed cycles \(Z_m\) for \(m \geq 2\).
\end{corollary}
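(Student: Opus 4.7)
The plan is to derive the corollary directly from \Cref{theorem-directed-cycles-omnibus}, appealing also to the two sentences following that theorem to handle the small cases $m=1,2$. I would first dispose of the triviality claim: for $m=1$, every page of the MPSS of $Z_1$ is concentrated in bidegree $(0,0)$; for $m=2$, the graph $Z_2$ has diameter $1$, hence is $1$-contractible, and so by \Cref{r-homotopy-invariance} its MPSS collapses to the trivial one from $E^2$ onwards; for $m\geq 3$, the theorem directly asserts that $E^m(Z_m)$ is trivial, and hence so is every subsequent page. Packaging these three cases gives $E^r(Z_m)$ trivial whenever $m \leq r$.

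For the distinguishing claim, I would fix $r \geq 1$ and show that the bigraded $R$-modules $E^r(Z_m)$ for $m \geq r$ are pairwise non-isomorphic. Since $E^r(Z_r)$ is trivial by the previous paragraph, while for every $m' > r$ the theorem yields $E^r(Z_{m'}) = E^2(Z_{m'})$, which is non-trivial (for instance, $\PH_{2,m'}(Z_{m'}) \neq 0$), the graph $Z_r$ is distinguished from every $Z_{m'}$ with $m' > r$. For distinct $m, m'$ both strictly larger than $r$, we again have $E^r(Z_m) = E^2(Z_m)$ and $E^r(Z_{m'}) = E^2(Z_{m'})$, and these differ as bigraded modules because the smallest length $\ell > 0$ at which a non-zero class appears is $m$ in the first case and $m'$ in the second; any isomorphism of bigraded modules must preserve this invariant.

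The last sentence of the corollary is then the specialization $r = 2$, since $E^2$ is bigraded path homology. The main obstacle, if any, is purely bookkeeping: \Cref{theorem-directed-cycles-omnibus} does all the real work, and one need only remember to invoke \Cref{r-homotopy-invariance} (together with $1$-contractibility of $Z_2$) to handle the cases $m \in \{1,2\}$, which lie outside the scope of the theorem's main conclusion.
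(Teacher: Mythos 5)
The paper gives no explicit proof of this corollary---it is presented as an immediate consequence of Theorem~\ref{theorem-directed-cycles-omnibus} and the two sentences following it---and your proposal reconstructs exactly that intended argument: triviality for $m\leq r$ from $E^m(Z_m)$ being trivial (plus the $m=1,2$ remarks), and the distinguishing claim by reading off bidegrees. The structure is right, but two steps are stated incorrectly.

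First, your invariant for separating $Z_m$ from $Z_{m'}$ with $m\neq m'$ both $>r$ does not work as written. You claim the smallest length $\ell>0$ supporting a nonzero class of $\PH_{\ast,\ast}(Z_m)$ is $m$; but $\PH_{1,1}(Z_m)\cong R$ for every $m\geq 3$ (this is the ordinary $\PH_1(Z_m)$, corresponding to $i=0$ in the bidegree $(2i+1,mi+1)$), so that invariant equals $1$ for all of them and distinguishes nothing. The fix is immediate from the theorem's list of bidegrees $(2i,mi)$ and $(2i+1,mi+1)$: use instead the smallest $\ell$ supporting a nonzero class in homological degree $2$, namely $\ell=m$ coming from $\PH_{2,m}(Z_m)\cong R$ (equivalently, in spectral-sequence coordinates, the first column $i\geq 2$ in which $E^2_{i,\ast}(Z_m)$ is nonzero is $i=m$). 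Second, the identification $E^r(Z_{m'})=E^2(Z_{m'})$ is only supplied by the theorem for $2\leq r\leq m'-1$; your argument therefore covers $r\geq 2$ but not $r=1$ (or $r=0$), where one should instead quote the magnitude homology computation directly---e.g.\ $\MH_{0,0}(Z_m)\cong R^m$, or the rank-$m$ groups in bidegrees $(2i,mi)$ and $(2i+1,mi+1)$, already recover $m$. With these two repairs the proof is complete and matches the paper's intent.
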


Let us describe the MPSS of $Z_m$ and preview the work of the rest
of the section; we assume $m\geq 3$ for simplicity.
In Theorem~\ref{directed-cycle-MH} we will see that
the magnitude homology of $Z_m$ vanishes except for copies of
$R^m$ that occur in pairs, one in each total degree of the 
spectral sequence, each pair being connected by
the differential $d^1$, as depicted in \Cref{fig:E1_Zm}.
\begin{figure}[h!]
    \centering
    \begin{tikzpicture}[baseline=0,xscale=1.3]
        \draw[-stealth, ultra thick,white!90!black] (-1,0) to (10,0);
        \draw[-stealth, ultra thick,white!90!black] (0,1) to (0,-5);
        \node (a) at (0,0) {$R^m$};
        \node (b) at (1,0) {$R^m$};
        \node (c) at (4,-2) {$R^m$};
        \node (d) at (5,-2) {$R^m$};
        \node (e) at (8,-4) {$R^m$};
        \node (f) at (9,-4) {$R^m$};
        \draw[-stealth] (b) to node[below]{$\scriptstyle d^1$} (a);
        \draw[-stealth] (d) to node[below]{$\scriptstyle d^1$}(c);
        \draw[-stealth] (f) to node[below]{$\scriptstyle d^1$}(e);
        \node() at (0,0.5) {$\scriptstyle 0$};
        \node() at (1,0.5) {$\scriptstyle 1$};
        \node() at (4,0.5) {$\scriptstyle m$};
        \node() at (5,0.5) {$\scriptstyle m+1$};
        \node() at (8,0.5) {$\scriptstyle 2m$};
        \node() at (9,0.5) {$\scriptstyle 2m+1$};
        \node() at (-0.5,0) {$\scriptstyle 0$};
        \node() at (-0.5,-2) {$\scriptstyle -(m-2)$};
        \node() at (-0.5,-4) {$\scriptstyle -2(m-2)$};
        \node[draw,below left]() at (10,-0.2) {$E^1_{\ast,\ast}(Z_m)$};
    \end{tikzpicture}
    \caption{The magnitude homology of the directed \(m\)-cycle.}
    \label{fig:E1_Zm}
\end{figure}
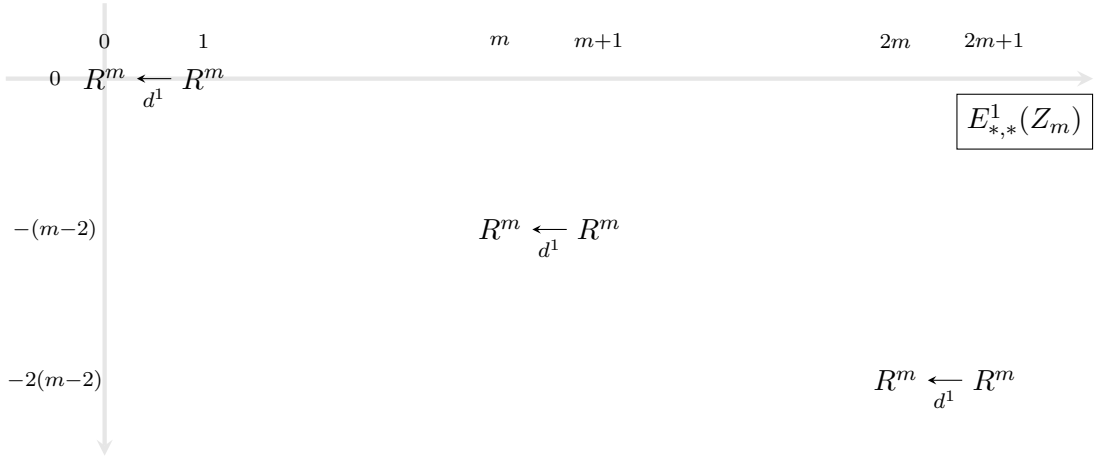
This result is based on Theorem~\ref{OP-homology},
which computes the homology
of an auxiliary chain complex, which we call the 
\emph{complex of partitions with upper bound}.
The magnitude homology of $Z_m$ is then a direct sum of
copies of the homology groups of these complexes of partitions.

Returning to the $E^1$ term,
we will see in Theorem~\ref{bph-Zm} that by computing the
differentials $d^1$,
it follows that the $E^2$-term---or in other words, the bigraded path homology---has the form depicted in \Cref{fig:E2_Zm}.
\begin{figure}[h!]
    \centering
    \begin{tikzpicture}[baseline=0,xscale=1.3]
        \draw[-stealth, ultra thick,white!90!black] (-1,0) to (10,0);
        \draw[-stealth, ultra thick,white!90!black] (0,1) to (0,-5);
        \node (a) at (0,0) {$R$};
        \node (b) at (1,0) {$R$};
        \node (c) at (4,-2) {$R$};
        \node (d) at (5,-2) {$R$};
        \node (e) at (8,-4) {$R$};
        \node (f) at (9,-4) {$R$};
        \draw[-stealth] (c) to node[below left]{$\scriptstyle d^{m-1}$} (b);
        \draw[-stealth] (e) to  node[below left]{$\scriptstyle d^{m-1}$} (d);
        \node() at (0,0.5) {$\scriptstyle 0$};
        \node() at (1,0.5) {$\scriptstyle 1$};
        \node() at (4,0.5) {$\scriptstyle m$};
        \node() at (5,0.5) {$\scriptstyle m+1$};
        \node() at (8,0.5) {$\scriptstyle 2m$};
        \node() at (9,0.5) {$\scriptstyle 2m+1$};
        \node() at (-0.5,0) {$\scriptstyle 0$};
        \node() at (-0.5,-2) {$\scriptstyle -(m-2)$};
        \node() at (-0.5,-4) {$\scriptstyle -2(m-2)$};
        \node[draw, below left]() at (10,-0.2) {$E^2_{\ast,\ast}(Z_m)=E^{m-1}_{\ast,\ast}(Z_m)$};
    \end{tikzpicture}
    \caption{The bigraded path homology of the directed \(m\)-cycle.}
    \label{fig:E2_Zm}
\end{figure}
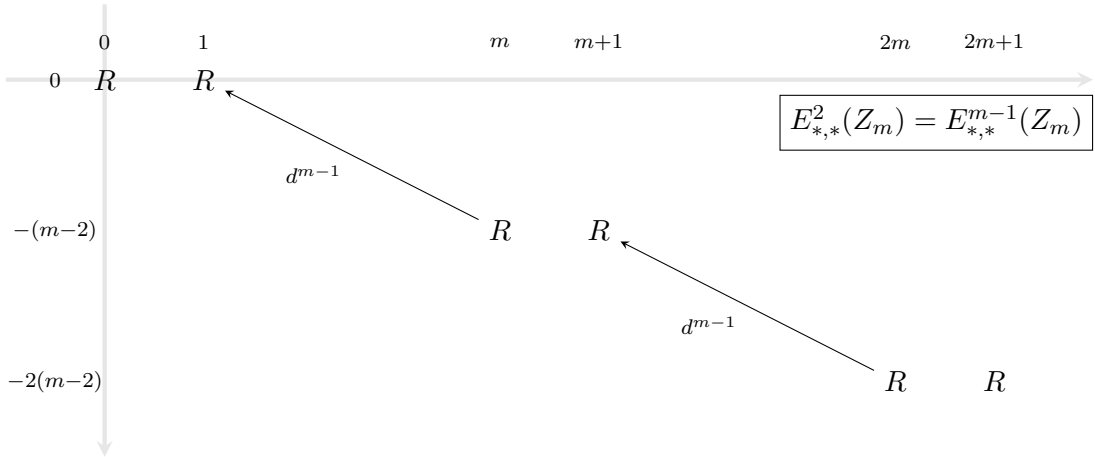
The nonzero groups here lie in bidegrees of the form
$(im,-i(m-2))$ and $(im+1,-i(m-2))$, and in particular these
bidegrees mean that the bigraded path homology of $Z_m$
determines $m$ for $m\geq 3$.
Finally, the position of the nonzero terms means that the only
subsequent differential can be in the $(m-1)$-page of the 
spectral sequence.
Since we know that the homology of the reachability complex is trivial, the same 
must be true of the term $E^m=E^\infty$, so that each $d^{m-1}$
depicted must be an isomorphism.

\subsection{The complex of ordered partitions with upper bound}

\begin{definition}
    Let $\ell\in\mathbbZ$.
    An \emph{ordered partition} of $\ell$
    is an ordered tuple $(a_1,\ldots,a_k)$
    of $k\geq 0$ positive integers $a_i$ for which
    $a_1+\cdots+a_k=\ell$.
\end{definition}

In the last definition, 
observe that there is a unique ordered partition of $\ell=0$,
namely the empty tuple $()$, but that any ordered partition of 
$\ell>0$ must have $k>0$ entries.
Note also that the definition admits the cases $\ell<0$,
but that in these cases there are no ordered partitions.
(This apparently pointless decision will be useful 
in formulating a later lemma.)

\begin{definition}
    Given $\ell,m\in\mathbbZ$ with $m\geq 2$,  
    we let $\OP_\ast(\ell,m)$ be the chain complex 
    of $R$-modules defined as follows.
    In degree $k$,
    $\OP_k(\ell,m)$ has basis given by the ordered 
    partitions $(a_1,\ldots,a_k)$ of $\ell$
    such that $a_i<m$ for all $i$.
    The differential $d$ vanishes for $k\leq 1$,
    and is given in degrees $k\geq 2$ 
    by summing  adjacent entries,
    \[
        d(a_1,\ldots,a_k)
        =
        \sum_{i=1}^{k-1} (-1)^i
        (a_1,\ldots,a_i+a_{i+1},\ldots,a_k),
    \]
    and omitting any terms in which the summed entry
    fails the requirement $a_i+a_{i+1}<m$.
\end{definition}

In the last definition, note that
$\OP_\ast(\ell,m)$ is defined---and vanishes---for $\ell\leq -1$,
while $\OP_\ast(0,m)$ has a single basis element
$()\in\OP_0(0,m)$, 
and $\OP_\ast(1,m)$ has the single basis element
$(1)\in\OP_1(1,m)$.

We will see later that the magnitude homology of a directed cycle
of size $m$ has a description in terms of 
the homology of the complexes $\OP_\ast(\ell,m)$.
Our aim now is to compute that homology.

\begin{theorem}\label{OP-homology}
    Let $\ell,m,k\in\mathbbZ$ with $m\geq 2$ and $k\geq 0$.
    Then $H_k(\OP(\ell,m))=0$, except in the following cases.
    \begin{itemize}
        \item
        Let $i\geq 0$.
        Then $H_{2i}(\OP(mi,m))$ is a copy of $R$ generated by
        the element 
        \[
            [
                {(1,m-1,\ldots,1,m-1)}
            ]
            =
            [
                {(m-1,1,\ldots,m-1,1)}
            ],
        \]
        where each tuple has $2i$ entries.
        \item
        Let $i\geq 0$.
        Then $H_{2i+1}(\OP(mi+1,m))$ 
        is a copy of $R$ generated by the element 
        \[
            [
                {(1,m-1,\ldots,1,m-1,1)}
            ],
        \]
        where the tuple has $2i+1$ entries.
    \end{itemize}
\end{theorem}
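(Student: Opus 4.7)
The plan is to identify $\OP_\ast(\ell, m)$ with a graded piece of a standard bar complex, reducing the calculation to a well-known $\tor$ computation. Set $A = R[x]/(x^m)$, augmented via $x \mapsto 0$ and internally graded by $|x| = 1$. The reduced bar complex $\overline{B}_\ast(A)$ has $\overline{B}_k(A) = \widetilde{A}^{\otimes k}$, where $\widetilde{A}$ is the augmentation ideal with $R$-basis $\{x, x^2, \ldots, x^{m-1}\}$, and carries the differential
\[
d(x^{a_1} \otimes \cdots \otimes x^{a_k}) = \sum_{i=1}^{k-1} (-1)^i \, x^{a_1} \otimes \cdots \otimes x^{a_i + a_{i+1}} \otimes \cdots \otimes x^{a_k},
\]
in which any term with $a_i + a_{i+1} \geq m$ vanishes, since $x^m = 0$ in $A$. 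Under the obvious correspondence $x^{a_1} \otimes \cdots \otimes x^{a_k} \leftrightarrow (a_1, \ldots, a_k)$, the internal-degree-$\ell$ summand of $\overline{B}_k(A)$ is precisely $\OP_k(\ell, m)$.

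It therefore suffices to compute $\tor^A_\ast(R, R)$ with its internal grading. Since $R$ admits the two-periodic minimal graded free resolution
\[
\cdots \xrightarrow{x^{m-1}} A(-2m) \xrightarrow{x} A(-m-1) \xrightarrow{x^{m-1}} A(-m) \xrightarrow{x} A(-1) \xrightarrow{x} A \to R \to 0,
\]
where $A(-n)$ denotes the shift placing the generator in internal degree $n$, and since applying $R \otimes_A -$ kills every differential (each being multiplication by a positive power of $x$), we obtain $\tor^A_{2i}(R, R) \cong R$ concentrated in internal degree $mi$, $\tor^A_{2i+1}(R, R) \cong R$ concentrated in internal degree $mi + 1$, and $\tor^A_\ast(R, R) = 0$ in all other bidegrees. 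This yields the stated rank and vanishing behaviour for $H_\ast(\OP(\ell, m))$.

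To exhibit explicit generators, I note that the two alternating tuples in the statement are visibly cycles in $\OP_\ast(\ell, m)$: every merge of two adjacent entries produces a sum of at least $m$, so every potential boundary term vanishes. Since each lies in the correct bidegree of a rank-one homology group, each must represent a generator. It remains to check that $[(1, m-1, \ldots, 1, m-1)]$ equals $[(m-1, 1, \ldots, m-1, 1)]$ in $H_{2i}(\OP(mi, m))$. For $m = 2$ the two tuples literally coincide, so assume $m \geq 3$. For each $0 \leq j < i$ set
\[
r_j = (\underbrace{1, m-1, \ldots, 1, m-1}_{2j \text{ entries}},\ \underbrace{m-1, 1, \ldots, m-1, 1}_{2(i-j) \text{ entries}}),
\]
so that $r_0$ and $r_i$ are the two cycles in question, and consider
\[
q_j = (\underbrace{1, m-1, \ldots, 1, m-1}_{2j \text{ entries}},\ 1,\ m-2,\ 1,\ \underbrace{m-1, 1, \ldots, m-1, 1}_{2(i-j-1) \text{ entries}}) \in \OP_{2i+1}(mi, m).
\]
A routine check shows that only the two merges $1 + (m-2) = m-1$ and $(m-2) + 1 = m-1$ survive in $dq_j$ (every other adjacent pair of entries of $q_j$ sums to $m$), yielding $dq_j = r_{j+1} - r_j$; telescoping over $j = 0, \ldots, i-1$ then gives $[r_0] = [r_i]$, as required.

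The only piece of real content is the identification of $\OP_\ast(\ell, m)$ with the reduced bar complex of $R[x]/(x^m)$; after that, the $\tor$ computation is entirely standard, and the identification of generators reduces to the telescoping computation above. I anticipate that the only technical point requiring care is the bookkeeping of signs in the bar differential against those in $\OP$.
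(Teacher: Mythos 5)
Your identification of $\OP_\ast(\ell,m)$ with the internal-degree-$\ell$ summand of the normalized bar complex $B(R,A,R)$ for $A=R[x]/(x^m)$ is correct: the two outer face maps vanish on $\widetilde{A}^{\otimes k}$ because the module actions factor through the augmentation, and the surviving faces are exactly the adjacent merges with sign $(-1)^i$, with merged entries killed precisely when the exponent reaches $m$. The two-periodic minimal resolution then gives the ranks and the vanishing pattern, and your telescoping computation $dq_j = r_{j+1}-r_j$ (which I checked: only the merges $1+(m-2)$ and $(m-2)+1$ survive, with signs $(-1)^{2j+2}$ and $(-1)^{2j+1}$) correctly shows the two alternating $2i$-tuples are homologous. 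This is a genuinely different route from the paper, which instead proves that $(a_1,\ldots,a_k)\mapsto(1,m-1,a_1,\ldots,a_k)$ is a quasi-isomorphism $\OP_{\ast-2}(\ell-m,m)\to\OP_\ast(\ell,m)$ via an explicit contracting homotopy of its cokernel and inducts from the trivial cases $\ell\leq 1$; that induction delivers the generators automatically, whereas your $\tor$ computation delivers the ranks essentially for free but leaves the generators to be identified separately. (A minor slip: the labels $x$ and $x^{m-1}$ in your displayed resolution are out of phase with the shifts $A(-mi)$, $A(-mi-1)$, but the shifts are what determine the answer and they are right.)

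The gap is the sentence ``Since each lies in the correct bidegree of a rank-one homology group, each must represent a generator.'' This is a non sequitur: a cycle in a bidegree where the homology is free of rank one may represent zero, or a non-unit multiple of a generator, and nothing you have written excludes, say, $[(1,m-1,\ldots,1,m-1)] = 2g$ for a generator $g$ of $H_{2i}(\OP(mi,m))\cong R$. Cheap detections do not obviously work either: for example the ``sum of coefficients'' functional fails to annihilate boundaries in general (for $m=3$ one has $d(2,1,1,1,1)=(2,2,1,1)-(2,1,2,1)+(2,1,1,2)$, with coefficient sum $1$). The clean repair inside your framework is to write down the comparison chain map $f\colon P_\bullet\to B_\bullet$ from the periodic resolution to the bar resolution lifting $\mathrm{id}_R$: taking $f_{2i}(e_{2i}) = 1\otimes[x^{m-1}|x|\cdots|x^{m-1}|x]$ and $f_{2i+1}(e_{2i+1}) = 1\otimes[x|x^{m-1}|\cdots|x^{m-1}|x]$ works, because all internal merges produce $x^m=0$ and only the leading face survives, giving $d f_{2i}(e_{2i}) = x^{m-1}\otimes f_{2i-1}(e_{2i-1})$ and $d f_{2i+1}(e_{2i+1}) = x\otimes f_{2i}(e_{2i})$ as required. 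Since $R\otimes_A f$ is then a quasi-isomorphism carrying the generator $\bar e_k$ of $H_k(R\otimes_A P_\bullet)\cong R$ to exactly the alternating tuples in the statement, this pins down the generators. Without this (or an equivalent argument), the generator claims are unproven.
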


This theorem will be used as a black box in our computation
of the magnitude homology of the directed $m$-cycle.
The remainder of this subsection is dedicated to the proof
of the theorem, which the reader might therefore wish to skip.

We will see that, modulo a degree shift of $2$,
$H_\ast(\OP(\ell,m))$ is in fact periodic in $\ell$ 
with period $m$.
This will allow us to prove the theorem by induction.
We begin with the base cases.

\begin{lemma}\label{OP-base-cases}
    For $\ell\leq 1$, the homology of 
    $\OP_\ast(\ell,m)$ is as follows:
    \begin{itemize}
        \item
        For $\ell<0$, $H_\ast(\OP(\ell,m))$
        vanishes.
        \item
        For $\ell=0$, $H_\ast(\OP(\ell,m))$ 
        is a single copy of $R$ concentrated
        in degree $0$, generated by the class of the
        empty word $()$.
        \item
        For $\ell=1$, $H_\ast(\OP(\ell,m))$
        is a single copy of $R$ concentrated in degree
        $1$, generated by the class of the word $(1)$.
    \end{itemize}
\end{lemma}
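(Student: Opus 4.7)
The plan is to prove each of the three cases by directly enumerating the basis of the complex $\OP_*(\ell,m)$ in low dimensions; since there is essentially nothing to compute, the entire argument reduces to bookkeeping on ordered partitions.

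For $\ell < 0$, I would observe that an ordered partition $(a_1,\ldots,a_k)$ of $\ell$ consists of positive integers summing to $\ell$, and there are no such tuples when $\ell$ is negative (even the empty tuple $()$ sums to $0$, not to a negative number). Hence $\OP_k(\ell,m)=0$ for every $k$, and the homology vanishes.

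For $\ell = 0$, the only ordered partition is the empty tuple $()\in \OP_0(0,m)$: any nonempty tuple of positive integers has strictly positive sum. Thus $\OP_*(0,m)$ is a single copy of $R$ concentrated in degree $0$, and the whole complex is its own homology. For $\ell = 1$, I would similarly note that positive integers $a_i \geq 1$ summing to $1$ force $k = 1$ and $a_1 = 1$, and the condition $a_1 < m$ is satisfied since $m \geq 2$; so $\OP_*(1,m)$ is a single copy of $R$ concentrated in degree $1$, generated by $(1)$. The differential vanishes identically in degrees $\leq 1$ by definition, so again the complex coincides with its homology.

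There is no genuine obstacle here: the lemma is really a tabulation of basis elements, and the only thing worth being careful about is the convention that $\OP_*(\ell,m)$ is declared (and zero) for $\ell < 0$, which is used so that the subsequent inductive argument treating $\ell \geq 2$ by periodicity modulo $m$ has a uniform base case.
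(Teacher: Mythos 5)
Your proposal is correct and matches the paper's own proof, which likewise observes that $\OP_\ast(\ell,m)$ vanishes for $\ell<0$ and that each of $\OP_\ast(0,m)$ and $\OP_\ast(1,m)$ has a single basis element, namely $()$ and $(1)$. The only difference is that you spell out the enumeration in a little more detail, which is fine.
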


\begin{proof}
    These claims are immediate: $\OP_\ast(\ell,m)$
    vanishes for $\ell<0$, while each of $\OP_\ast(0,m)$
    and $\OP_\ast(1,m)$ has a single basis element,
    namely $()$ and $(1)$ respectively.
\end{proof}

\begin{definition}
    Let $\ell\geq 2$ and $m\geq 2$, and  define a chain map 
    $\phi\colon\OP_{\ast-2}(\ell-m,m)\to\OP_{\ast}(\ell,m)$
    by 
    \[
        \phi(a_1,\ldots,a_k)
        =
        (1,m-1,a_1,\ldots,a_k).
    \]
    It is straightforward to check that this is a chain map,
    thanks to the fact that $1+(m-1)\geq m$ and
    $(m-1)+a_1\geq m$.
\end{definition}

\begin{lemma}\label{OP-step}
    Let $\ell,m\geq 2$.
    Then $\phi\colon\OP_{\ast-2}(\ell-m,m)\to\OP_{\ast}(\ell,m)$ 
    is a quasi-isomorphism.
\end{lemma}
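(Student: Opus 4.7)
The plan is to identify $\OP_{\ast-2}(\ell-m, m)$, via the injective map $\phi$, with the subcomplex $B\subseteq \OP_\ast(\ell,m)$ spanned by the tuples starting with $(1, m-1)$. A quick check shows $B$ is indeed a subcomplex: applied to a generator $(1, m-1, a_1, \ldots, a_k)$, the differential gives no term from merging the leading $1$ and $m-1$ (as $1+(m-1)=m$ fails the $<m$ constraint) nor from merging $m-1$ with $a_1$ (as $(m-1)+a_1\geq m$), and every other term preserves the leading pair. The short exact sequence $0\to B\to \OP_\ast(\ell,m)\to Q\to 0$ with $Q=\OP_\ast(\ell,m)/B$ then reduces the lemma, via its long exact sequence in homology, to proving $H_\ast(Q)=0$.

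To prove $Q$ is acyclic, I would construct an explicit contracting chain homotopy $h\colon Q_k\to Q_{k+1}$ by ``splitting a $1$ off the front'' of any tuple whose first entry is at least $2$. Concretely, for a generator $(a_1,\ldots,a_k)$ of $Q$ (i.e., a tuple not beginning with $(1,m-1)$), set
\[
    h(a_1,\ldots,a_k)=
    \begin{cases}
        -(1,a_1-1,a_2,\ldots,a_k) & \text{if } a_1\geq 2,\\
        0 & \text{if } a_1=1,
    \end{cases}
\]
which lies in $Q$ because $a_1<m$ forces $a_1-1\neq m-1$.

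Verifying $dh+hd=\mathrm{id}_Q$ then reduces to a direct computation in two cases. When $a_1\geq 2$, in $dh(a)=-d(1,a_1-1,a_2,\ldots,a_k)$ the merge of positions $1,2$ returns $(a_1,\ldots,a_k)$; the merges of positions $i\geq 3$ cancel pairwise against the $j\geq 2$ contributions to $hd(a)$; and the merge of positions $2,3$ yields $-(1,a_1+a_2-1,a_3,\ldots,a_k)$, which cancels with the $j=1$ contribution to $hd(a)$ whenever $a_1+a_2<m$. The only subtle case is the boundary $a_1+a_2=m$: there the middle term becomes $-(1,m-1,a_3,\ldots,a_k)$, which lies in $B$ and therefore vanishes in $Q$. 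When $a_1=1$, one has $h(a)=0$ and hence $dh(a)=0$; in $hd(a)$, every summand of $d(a)$ except the one obtained by merging the leading $1$ with $a_2$ still begins with $1$ and is annihilated by $h$, while the surviving summand evaluates to $(1,a_2,\ldots,a_k)=a$. The main obstacle is the disciplined bookkeeping of signs and of the ``omit any merged pair summing to $\geq m$'' rule, with the boundary case $a_1+a_2=m$ being the genuinely delicate interaction, resolved precisely by passing to the quotient $Q$. Once the homotopy identity is established, $Q$ is acyclic and $\phi$ is a quasi-isomorphism.
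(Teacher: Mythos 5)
Your proposal is correct and follows essentially the same route as the paper: the paper also reduces to showing the cokernel of $\phi$ (your quotient $Q$) is acyclic, and contracts it with exactly the same homotopy $s(a_1,\ldots,a_k)=-(1,a_1-1,a_2,\ldots,a_k)$ for $a_1>1$ and $0$ for $a_1=1$, with the boundary case $a_1+a_2=m$ handled by the same observation that $(1,m-1,a_3,\ldots,a_k)$ vanishes in the quotient. The only cosmetic difference is that the paper isolates $k=1$ as a separate case (where the hypothesis $\ell\geq 2$ is used), whereas you absorb it into the $a_1\geq 2$ case.
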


Note that we include the cases $2\leq\ell\leq m$,
where the lemma tells us that $\OP_\ast(\ell,m)$
is acyclic.

\begin{proof}
    Since $\phi$ is injective, it will suffice to show that
    its cokernel, which we denote by $C_\ast$, is acyclic.
    Observe that $C_k$ has basis given by the basis elements
    $(a_1,\ldots,a_k)$ of $\OP_k(\ell,m)$
    for which either $a_1>1$, or $a_1=1$ and $a_2<m-1$.
    To prove that $C_\ast$ is acyclic, we use the map
    $s\colon C_\ast\to C_{\ast+1}$ defined on basis elements as
    follows.
    \[
        s(a_1,\ldots,a_k)
        =
        \begin{cases}
            0
            &
            \text{if }a_1=1
            \\
            -(1,a_1-1,a_2,\ldots,a_k)
            &
            \text{if }a_1>1
        \end{cases}
    \]
    We will show that $s$ is a chain contraction, i.e.~that
    $sd+ds$ is the identity map, and this will complete 
    the proof.
    Let us therefore fix a basis element $(a_1,\ldots,a_k)$
    and show that $(sd+ds)(a_1,\ldots,a_k)=(a_1,\ldots,a_k)$.
    We will do this in three separate cases, as follows.

    \emph{Case 1.}
    We begin with the case $k=1$, so that the only possible
    basis element of $C_1$ is $(\ell)$. 
    (Even this element will not be present when $\ell\geq m$.)
    Then $sd(\ell)=s(0)=0$ and $ds(\ell)=-d(1,\ell-1)=(\ell)$,
    so that $(sd+ds)(\ell)=(\ell)$ as required.
    (Note that the restriction to $\ell\geq 2$ 
    was necessary since case 1 fails when $\ell=1$.)
    
    \emph{Case 2.}
    Let us assume now that $k>1$, that $a_1=1$
    and that $a_2<m-1$.
    Then $ds(a_1,\ldots,a_k)=0$, while
    \begin{align*}
        sd(a_1,\ldots,a_k)
        &=
        \sum_{i=1}^{k-1} (-1)^i
        s(a_1,\ldots,a_i+a_{i+1},\ldots,a_k)
        \\
        &=
        -s(a_1+a_2,a_3,\ldots,a_k)
        \\
        &=
        (a_1,\ldots,a_k).
    \end{align*}
    In the first line, the $i$-th term of the sum is omitted if
    $a_i+a_{i+1}\geq m$. 
    This omission only happens for $i>1$,
    and in these cases the resulting partition has initial
    entry $a_1=1$ and therefore vanishes under $s$,
    giving us the second line, 
    and the third then follows immediately since $a_1+a_2>1$.
    Thus in this case $sd+ds$ sends $(a_1,\ldots,a_k)$
    to itself as required.

    \emph{Case 3.}
    Let $(a_1,\ldots,a_k)$ be a basis element of $C_k$,
    and assume that $1< a_1\leq m-1$.
    Then 
    \begin{align*}
        sd(a_1,\ldots,a_k)
        &=
        \sum_{i=1}^{k-1} (-1)^i
        s(a_1,\ldots,a_i+a_{i+1},\ldots,a_k)
        \\
        &=
        -s(a_1+a_2,a_3,\ldots,a_k)
        +
        \sum_{i=2}^{k-1} (-1)^i 
        s(a_1,\ldots,a_i+a_{i+1},\ldots,a_k)
        \\
        &=
        (1,a_1+a_2-1,a_3,\ldots,a_k)
        +
        \sum_{i=2}^{k-1} (-1)^{i+1} 
        (1,a_1-1,\ldots,a_i+a_{i+1},\ldots,a_k)
    \end{align*}
    where the first term is omitted if $a_1+a_2\geq m$,
    and the $i$-th term of each sum is omitted if
    $a_i+a_{i+1}\geq m$.
    And
    \begin{align*}
        ds(a_1,\ldots,a_k)
        &=
        -d(1,a_1-1,a_2,\ldots,a_k)
        \\
        &=
        (a_1,\ldots,a_k)
        -
        (1,a_1+a_2-1,a_3,\ldots,a_k)
        \\
        &\qquad\qquad
        +\sum_{i=2}^{k-1} (-1)^i
        (1,a_1-1,\ldots,a_i+a_{i+1},\ldots,a_k)
    \end{align*}
    where the second term is omitted if $a_1+a_2-1\geq m$,
    and the $i$-th term of the sum is omitted if
    $a_i+a_{i+1}\geq m$.
    Thus when we compute $(sd+ds)(a_1,\ldots,a_k)$
    we find that the sums cancel out,
    the question of which of their terms are omitted being
    the same in each case,
    and that $(a_1,\ldots,a_k)$ always remains.
    Thus $(sd+ds)(a_1,\ldots,a_k)=(a_1,\ldots,a_k)$
    will hold so long as we can show that 
    the two potential terms $(1,a_1+a_2-1,a_3,\ldots,a_k)$ 
    contribute $0$ overall, 
    the question being exactly when each one is omitted.
    If $a_1+a_2\geq m+1$ then both of these terms are omitted;
    if $a_1+a_2\leq m-1$ then both are present and cancel out;
    and if $a_1+a_2=m$ then one is present and the other is not.
    But in this last case we in fact have
    $(1,a_1+a_2-1,a_3,\ldots,a_k) =(1,m-1,a_3,\ldots,a_k)=0$
    because we work in the cokernel of $\phi$.
    This completes the proof that $(ds+sd)(a_1,\ldots,a_k)
    =(a_1,\ldots,a_k)$ in this case.
\end{proof}

\begin{proof}[Proof of Theorem~\ref{OP-homology}]
    The two cycles listed in the first bullet point
    are homologous. 
    Indeed, let us define
    $c\in\OP_{2i+1}(mi,m)$ to be the sum
    \[
        c
        =
        \sum_{j=1}^i
        (1,m-1,\ldots,1,m-2,1,\ldots,m-1,1),
    \]
    where each tuple has $2i+1$ entries,
    and $m-2$ appears in position $2j$ in the $j$-th term.
    Then
    \[
        dc
        =
        (1,m-1,\ldots,1,m-1)
        -
        (m-1,1,\ldots,m-1,1)
    \]
    where now each tuple has $2i$ entries.
    
    For $\ell\leq 1$ the theorem follows from 
    Lemma~\ref{OP-base-cases},
    while for $\ell\geq 2$ we may repeatedly apply
    Lemma~\ref{OP-step} to one of the cases from 
    Lemma~\ref{OP-base-cases} to obtain the result,
    including the description of the  generators.
\end{proof}

\subsection{The magnitude homology of $Z_m$}

If $x$ is a vertex of $Z_m$, then we write $x^+$ for
the next vertex in the cyclic order, and $x^-$ for
the previous vertex.
In other words, $x^\pm$ are characterised by 
the existence of edges $x^-\to x\to x^+$.

\begin{definition}
    We define two families of classes in the magnitude
    homology of $Z_m$ as follows.
    \begin{itemize}
        \item
        Given $x\in V(Z_m)$ and $i\geq 0$,
        define $\kappa_x^i\in\MH_{2i,mi}(Z_m)$
        by
        \[
            \kappa_x^i
            =
            [
                (x,x^+,\ldots,x,x^+,x)
            ]
            =
            [
                (x,x^-,\ldots,x,x^-,x)
            ],
        \]
        where the tuple has a total of $2i+1$ entries.

        \item
        Given $e\in E(Z_m)$ and $i\geq 0$,
        define $\lambda^i_e\in\MH_{2i+1,mi+1}(Z_m)$
        by 
        \[
            \lambda_e^i
            =
            [
                (x,x^+,\ldots,x,x^+)
            ],
        \]
        where $e=xx^+$ and the tuple has a total of 
        $2i+2$ entries.
    \end{itemize}
    In the spectral sequence grading, 
    these classes lie in positions
    \[
        \kappa^i_x\in E^1_{mi,(2-m)i}(Z_m)
        \quad
        \text{and}
        \quad
        \lambda^i_e\in E^1_{mi+1,(2-m)i}(Z_m).
    \]
\end{definition}

\begin{theorem}\label{directed-cycle-MH}
    Let $m\geq 2$.
    The magnitude homology $\MH_{\ast,\ast}(Z_m)$
    is the free $R$-module with basis given by the 
    elements $\kappa_x^i$ and $\lambda_e^i$
    for $i\geq 0$, $x\in V(Z_m)$ and $e\in E(Z_m)$.
    In particular, it is free of rank
    $m$ in bidegrees of the form 
    $(2i,mi)$ and $(2i+1,mi+1)$, and it is zero
    in all other bidegrees.
\end{theorem}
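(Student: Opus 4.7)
The plan is to decompose the magnitude chain complex $\MC_{\ast,\ell}(Z_m)$ as a direct sum, indexed by vertices of $Z_m$, of copies of the auxiliary complex $\OP_\ast(\ell,m)$, and then invoke \Cref{OP-homology} to read off the result.

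First I would observe that in $Z_m$ every ordered pair of distinct vertices $(y,z)$ has distance $d(y,z)\in\{1,\ldots,m-1\}$. Consequently, a generator $(x_0,\ldots,x_k)$ of $\MC_{k,\ell}(Z_m)$ is determined uniquely by the starting vertex $x_0\in V(Z_m)$ together with the tuple of distances $a_i := d(x_{i-1},x_i)\in\{1,\ldots,m-1\}$ for $i=1,\ldots,k$: given $x_0$ and $a_1,\ldots,a_i$, the vertex $x_i$ is the unique vertex of $Z_m$ obtained from $x_0$ by travelling $a_1+\cdots+a_i$ steps forward around the cycle. The condition $\ell(x_0,\ldots,x_k)=\ell$ is precisely $a_1+\cdots+a_k=\ell$, and the requirement that consecutive entries be distinct is $a_i\geq 1$. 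So the data is exactly a vertex $x_0$ together with an ordered partition of $\ell$ into parts strictly less than $m$. This yields an isomorphism of graded modules
\[
    \MC_{\ast,\ell}(Z_m)\;\cong\;\bigoplus_{x\in V(Z_m)}\OP_\ast(\ell,m).
\]

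Next I would verify that this identification is an isomorphism of chain complexes. Removing the $i$-th vertex $x_i$ for $1\leq i\leq k-1$ changes the starting vertex not at all and replaces the adjacent distances $a_i,a_{i+1}$ by $d(x_{i-1},x_{i+1})$. In $Z_m$ we have $d(x_{i-1},x_{i+1})=a_i+a_{i+1}$ precisely when $a_i+a_{i+1}<m$, and otherwise $d(x_{i-1},x_{i+1})<a_i+a_{i+1}$ so the length drops; hence the magnitude-chain differential precisely matches the differential of $\OP_\ast(\ell,m)$. The fact that the $i=0$ and $i=k$ terms of the magnitude-chain differential are always omitted (their removal changes the length) corresponds, under the bijection, to the vanishing of the $\OP$-differential in degrees $0$ and $1$.

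Applying \Cref{OP-homology} summand-by-summand, the homology $\MH_{k,\ell}(Z_m)$ vanishes unless $(k,\ell)=(2i,mi)$ or $(k,\ell)=(2i+1,mi+1)$ for some $i\geq 0$, in which case it is free of rank equal to $|V(Z_m)|=m$. To identify the generators, I would trace the cycles of \Cref{OP-homology} back through the bijection: starting from a vertex $x$, the partition $(1,m-1,1,m-1,\ldots)$ with $2i$ parts produces the tuple $(x,x^+,x,x^+,\ldots,x)$, while $(m-1,1,m-1,1,\ldots)$ produces $(x,x^-,x,x^-,\ldots,x)$; these are the two descriptions of $\kappa_x^i$. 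Likewise, for $\ell=mi+1$ the partition $(1,m-1,\ldots,m-1,1)$ with $2i+1$ parts produces $(x,x^+,x,\ldots,x,x^+)$, which is $\lambda_e^i$ for the edge $e=xx^+$; as $x$ ranges over $V(Z_m)$, $e$ ranges over $E(Z_m)$. Since the heavy lifting is done by \Cref{OP-homology}, no genuine obstacle arises; the only point requiring care is the verification, described above, that the magnitude-chain differential on $Z_m$ matches the differential of $\OP_\ast(\ell,m)$ under the decomposition.
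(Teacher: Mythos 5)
Your proposal is correct and follows essentially the same route as the paper's proof: decompose $\MC_{\ast,\ell}(Z_m)$ by starting vertex into copies of $\OP_\ast(\ell,m)$, check that the differential matches via the observation that $d(x,y)+d(y,z)=d(x,z)$ if and only if $d(x,y)+d(y,z)<m$, and then apply \Cref{OP-homology} and trace the generators back through the bijection. No gaps.
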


\begin{proof}
    Observe that if we fix a vertex $x\in V(Z_m)$
    and a length $\ell\geq 0$,
    then the tuples of the form
    $(x_0,\ldots,x_k)$,
    where $x_0=x$, $k\geq 0$
    and $\ell(x_0,\ldots,x_k)=\ell$,
    span a subcomplex $\MC_\ast(x,\ell)$
    of $\MC_{\ast,\ell}(Z_m)$.
    Moreover, observe that 
    $\MC_{\ast,\ell}(Z_m)$ is the direct 
    sum of the $\MC_\ast(x,\ell)$ for $x\in V(Z_m)$.
    
    There is an isomorphism
    of chain complexes
    \[
        \MC_\ast(x,\ell)
        \xrightarrow{\ \cong\ }
        \OP_\ast(\ell,m)
    \]
    defined by
    \[
        (x,x_1,\ldots,x_k)
        \longmapsto
        (d(x,x_1),d(x_1,x_2),\ldots,d(x_{k-1},x_k)).
    \]
    That this is an isomorphism of $R$-modules follows
    immediately
    from the fact that, given $x\in V(Z_m)$ and
    $a\in\{1,\ldots,m-1\}$, there is a unique
    vertex $y\neq x$ with $d(x,y)=a$.
    That it is a chain map follows from the fact that,
    given $x,y,z\in V(Z_m)$,
    we have $d(x,y)+d(y,z)=d(x,z)$ if 
    and only if $d(x,y)+d(y,z)<m$.

    The isomorphism above identifies
    the elements 
    \[
        (x,x^+,\ldots,x,x^+,x),
        (x,x^-,\ldots,x,x^-,x)
        \in\MC_{2i}(x,mi)
    \]
    with the elements
    \[
        (1,m-1,\ldots,1,m-1),
        (m-1,1,\ldots,m-1,1)
        \in
        \OP_{2i}(mi),
    \]
    and identifies the element
    \[
        (x,x^+,\ldots,x,x^+)\in\MC_{2i+1}(x,mi+1)
    \]
    with
    \[
        (1,m-1,\ldots,m-1,1)\in\OP_{2i+1}(mi+1).
    \]
    The theorem now follows directly 
    from
    Theorem~\ref{OP-homology}.
\end{proof}

\subsection{Bigraded path homology}

\begin{theorem}\label{bph-Zm}
    Let $m\geq 3$.
    The bigraded path homology of $Z_m$ is the free
    $R$-module with basis given by classes
    $\alpha^{2i},\beta^{2i+1}$ for all $i\geq 0$, where
    \[
        \alpha^{2i}\in E^2_{mi,-(m-2)i}(Z_m),
        \qquad
        \beta^{2i+1}\in E^2_{mi+1,-(m-2)i}(Z_m)
    \]
    or equivalently 
    \[
        \alpha^{2i}\in \PH_{2i,mi}(Z_m),
        \qquad
        \beta^{2i+1}\in \PH_{2i+1,mi+1}(Z_m).
    \]
    In particular, $\PH_{\ast,\ast}(Z_m)$
    is free of rank $1$ in bidegrees of form $(2i,mi)$ and $(2i+1,mi+1)$
    for $i\geq 0$, and vanishes in all other bidegrees.
\end{theorem}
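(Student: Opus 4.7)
The plan is to compute the $E^2$-page from the $E^1$-page via the $d^1$-differential, using the description of $\MH_{\ast,\ast}(Z_m)$ from \Cref{directed-cycle-MH}. That result says $E^1_{\ast,\ast}(Z_m)$ is nonzero exactly in bidegrees $(mn,-(m-2)n)$, spanned by $\{\kappa_x^n : x\in V(Z_m)\}$, and $(mn+1,-(m-2)n)$, spanned by $\{\lambda_e^n : e\in E(Z_m)\}$, for each $n\geq 0$. Since $d^1$ decreases the first spectral sequence index by $1$ and preserves the second, the only potentially nonzero component is
\[
d^1\colon R\{\lambda_e^n\}_{e\in E(Z_m)} \longrightarrow R\{\kappa_x^n\}_{x\in V(Z_m)}.
\]
The target of $d^1$ out of $\kappa_x^n$ lies at $(mn-1,-(m-2)n)$, which is zero on $E^1$; likewise the source of $d^1$ into $\lambda_e^n$ lies at $(mn+2,-(m-2)n)$, which is also zero. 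So $E^2$ in each strip $\{-(m-2)n\}$ is just the kernel and cokernel of this single map.

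Next, I would compute $d^1(\lambda_e^n)$ directly from the recipe given after Figure~\ref{fig:E1}. Writing $e=(x,x^+)$, represent $\lambda_e^n$ by the tuple $(x,x^+,\ldots,x,x^+)$ with $2n+2$ entries, whose length is exactly $mn+1$. Apply the reachability differential~\eqref{eq-differential}. For any interior deletion (position $1\leq j\leq 2n$), the two newly adjacent entries are both $x$ or both $x^+$, so the term is killed by the repeated-consecutive-entries convention. Only the two end deletions survive, yielding
\[
\partial(x,x^+,\ldots,x,x^+) = (x^+,x,\ldots,x,x^+) - (x,x^+,\ldots,x^+,x),
\]
both of which have length exactly $mn$ and so survive the truncation to $\MC_{2n+1,mn}(Z_m)$. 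Using the two equivalent descriptions of $\kappa_y^n$ in the statement preceding \Cref{directed-cycle-MH}, these tuples are identified with $\kappa_{x^+}^n$ and $\kappa_x^n$ respectively, so
\[
d^1(\lambda_e^n) = \kappa_{x^+}^n - \kappa_x^n.
\]

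With that formula, for each fixed $n$ the differential $d^1$ is the cellular boundary map of $Z_m$ regarded as a topological circle: on the standard bases it sends each edge to the signed difference of its endpoints. Its kernel is a single copy of $R$ generated by $\sum_{e\in E(Z_m)}\lambda_e^n$, and its cokernel is a single copy of $R$ represented by $\kappa_x^n$ for any vertex $x$ (all such being identified modulo the image). Setting $\alpha^{2n}$ to be the class of $\kappa_x^n$ and $\beta^{2n+1}$ to be the class of $\sum_e \lambda_e^n$ places the generators in the bidegrees claimed. The only point requiring care is the sign bookkeeping and the correct identification of the two surviving boundary tuples as $\kappa_x^n$ and $\kappa_{x^+}^n$; this is not a serious obstacle because the massive cancellation of interior terms reduces everything to the boundary map of a cycle.
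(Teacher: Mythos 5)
Your proposal is correct and follows essentially the same route as the paper: the computation of $d^1(\lambda^n_e)=\kappa^n_{x^+}-\kappa^n_x$ via the two surviving end deletions is exactly the paper's Lemma~\ref{d-one-lemma}, and the identification of the kernel and cokernel of the resulting circle boundary map is how the paper concludes. No gaps.
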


\begin{lemma}\label{d-one-lemma}
    Let $m\geq 3$.
    If $e=xy\in E(Z_m)$, then 
    $d^1(\lambda^i_e)
    = \kappa^i_y - \kappa^i_x$.
    And if $x\in V(Z_m)$ then $d^1(\kappa^i_x)=0$.
\end{lemma}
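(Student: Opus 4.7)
The plan is to apply the explicit recipe for $d^1$ from \Cref{subsection-E0}: lift the chosen magnitude-chain representative to the reachability complex, apply $\partial_\RC$, and keep the length-$(\ell-1)$ part.

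For $\lambda^i_e$ with $e = xx^+$, I would take the representative $\lambda = (x, x^+, x, x^+, \ldots, x, x^+)$ with $2i+2$ entries in $\MC_{2i+1, mi+1}(Z_m)$. Applying \eqref{eq-differential}, every face obtained by deleting an \emph{interior} entry $x_k$ (for $1 \leq k \leq 2i$) has equal neighbours---both $x$ when $k$ is odd, both $x^+$ when $k$ is even---and so is omitted from the sum. Only two terms survive: $+(x^+, x, \ldots, x, x^+)$ from deleting $x_0$, and $-(x, x^+, \ldots, x^+, x)$ from deleting $x_{2i+1}$. A direct length count (each face has $i$ pairs of distance $1$ and $i$ pairs of distance $m-1$) shows that each of these has length exactly $mi$, so both represent nonzero classes in $\MC_{2i, mi}(Z_m)$. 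Using the alternate representation $\kappa^i_{x^+} = [(x^+, (x^+)^-, \ldots, x^+)] = [(x^+, x, \ldots, x, x^+)]$ recorded in the definition, I would identify these two terms as the distinguished representatives of $\kappa^i_{x^+}$ and $\kappa^i_x$ respectively, giving $d^1(\lambda^i_e) = \kappa^i_y - \kappa^i_x$.

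For $d^1(\kappa^i_x) = 0$, the slickest argument is a target-vanishing one: the differential lands in $\MH_{2i-1, mi-1}(Z_m)$, and by \Cref{directed-cycle-MH} the nonzero bidegrees of $\MH(Z_m)$ are precisely $(2j, mj)$ and $(2j+1, mj+1)$. The pair $(2i-1, mi-1)$ never matches the first form (parity) and matches the second only when $m = 2$; under the hypothesis $m \geq 3$ the target is zero and the claim follows automatically. (The case $i=0$ is trivial since $d^1$ then maps to a bidegree where $\MH$ obviously vanishes.) The only real care needed throughout is bookkeeping of signs and lengths, and the slight subtlety of matching each surviving face of $\lambda$ to the correct $\kappa$-representative---the two equivalent forms of $\kappa^i_{-}$ recorded in its definition are exactly what makes that painless.
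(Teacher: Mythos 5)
Your proposal is correct and follows essentially the same route as the paper: the same alternating representative, the same observation that deleting interior entries produces degenerate tuples so only the first and last faces survive, the same identification of those faces with the $(y,y^-,\ldots)$ and $(x,x^+,\ldots)$ representatives of $\kappa^i_y$ and $\kappa^i_x$, and the same bidegree-vanishing argument (via \Cref{directed-cycle-MH}) for $d^1(\kappa^i_x)=0$. No gaps.
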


\begin{proof}
    We have $y=x^+$, so that
    \[
        \lambda^i_e 
        =
        [(x,x^+,\ldots,x,x^+)]
        \in
        \MH_{2i+1,mi+1}(Z_m)
    \] 
    and consequently
    \[
        d^1(\lambda^i_e)
        =
        [d(x,x^+,\ldots,x,x^+)]
        \in
        \MH_{2i,mi}(Z_m).
    \]
    Here $d(x,x^+,\ldots,x,x^+)\in\MC_{2i,mi}(Z_m)$
    is the alternating sum of the tuples obtained
    by omitting terms from  $(x,x^+,\ldots,x,x^+)$.
    When we delete anything other than the first or last
    term, we obtain a tuple with repeated consecutive
    entries, which therefore vanishes in the magnitude
    chain group.
    Thus
    \begin{align*}
        d(x,x^+,\ldots,x,x^+)
        &=
        (x^+,x,\ldots ,x,x^+)
        - (x,x^+,\ldots,x^+,x)
        \\
        &=
        (y,y^-,\ldots ,y,y^-)
        - (x,x^+,\ldots,x^+,x)
        \\
    \end{align*}
    so that, taking classes in $\MH_{2i,mi}(Z_m)$,
    we find 
    $d^1(\lambda_e^i) = \kappa^i_y-\kappa^i_x$
    as required.
    Finally, $d^1(\kappa^i_x)=0$ because it lies in a bidegree
    where magnitude homology vanishes.
\end{proof}

\begin{proof}[Proof of Theorem~\ref{bph-Zm}]
    Lemma~\ref{d-one-lemma} shows that the homology of 
    $E^1_{\ast,\ast}(Z_m)$ with respect to $d^1$
    is freely spanned over $R$ by the
    following homology classes, for each $i\geq 0$:
    \begin{enumerate}
        \item
        $[\kappa^{2i}_x]$ for any chosen vertex $x$ of $Z_m$.
        A different choice of vertex defines the same 
        homology class.
        \item
        $\left[\sum_{e\in E(Z_m)} \lambda^{2i+1}_e\right]$.
    \end{enumerate}
    We may now define $\alpha^{2i}$ and $\beta^{2i+1}$ to be
    the elements of $E^2_{\ast,\ast}(Z_m)$ corresponding to 
    these classes under the isomorphism
    $H(E^1_{\ast,\ast}(Z_m))\cong E^2_{\ast,\ast}(Z_m)$.
\end{proof}

%%%%%%%%%%%%%%%%%%%%%%%%%%%%%%%%%%%%%%%%%%%%%%%%%%%%

\section{Bi-directed cycles}
\label{sec:mn_cycles}

In this section we determine the magnitude-path spectral sequence
for the bi-directed cycles.
Again, we find that the bigraded path homology contains strictly
more information than the path homology itself,
but that in contrast with the case of oriented cycles, it is not
sufficient to determine graphs of this class up to isomorphism.

\begin{definition}
    Let $m$ and $n$ be integers with $m,n\geq 1$.
    The \emph{bi-directed cycle} or \emph{$(m,n)$-cycle} 
    $C_{m,n}$ is obtained  by taking 
    directed intervals of length $m$ and $n$, 
    and then identifying their initial points to a single point,
    and their final points to a single point.
    Put differently, it is obtained from an unoriented $m+n$
    cycle by orienting a set of $m$ contiguous edges 
    in one direction,
    and the remaining $n$ edges in the opposite direction.
    We often think of $C_{m,n}$ as having its two intervals 
    oriented from left to right, with the interval of length $m$
    across the top, and the one of length $n$ across the bottom,
    as follows:
    \[
        \begin{tikzpicture}[baseline=0]
            \node(x) at (180:1.2) {$\bullet$};
            \node(y) at (0:1.2) {$\bullet$};
    
            \node (a2) at (90:1.2) {$\bullet$};
            
            \draw[-stealth] (x) -- (a2);
            \draw[-stealth] (a2) -- (y);
    
            \draw[-stealth] (x) -- (y);
            
            \node () at (-90:0.5) {$C_{2,1}$};
        \end{tikzpicture}
        \qquad\qquad
        \begin{tikzpicture}[baseline=0]
            \node(x) at (180:1.2) {$\bullet$};
            \node(y) at (0:1.2) {$\bullet$};
    
            \node (a2) at (90:1.2) {$\bullet$};
            \node (b2) at (-90:1.2) {$\bullet$};
            
            \draw[-stealth] (x) -- (a2);
            \draw[-stealth] (a2) -- (y);
    
            \draw[-stealth] (x) -- (b2);
            \draw[-stealth] (b2) -- (y);
            
            \node () at (0:0) {$C_{2,2}$};
        \end{tikzpicture}
        \qquad\qquad
        \begin{tikzpicture}[baseline=0]
            \node(x) at (180:1.2) {$\bullet$};
            \node(y) at (0:1.2) {$\bullet$};
    
            \node (a2) at (135:1.2) {$\bullet$};
            \node (a3) at (90:1.2) {$\bullet$};
            \node (a4) at (45:1.2) {$\bullet$};
    
            \node (b2) at (-120:1.2) {$\bullet$};
            \node (b3) at (-60:1.2) {$\bullet$};
            
            \draw[-stealth] (x) -- (a2);
            \draw[-stealth] (a2) -- (a3);
            \draw[-stealth] (a3) -- (a4);
            \draw[-stealth] (a4) -- (y);
    
            \draw[-stealth] (x) -- (b2);
            \draw[-stealth] (b2) -- (b3);
            \draw[-stealth] (b3) -- (y);
            
            \node () at (0:0) {$C_{4,3}$};
        \end{tikzpicture}
    \]
    Observe that $C_{m,n}$ has both an initial and a terminal
    vertex; 
    in our diagrams these are the ones at the left and right
    respectively.
    Moreover, $C_{m,n}$ has diameter $\m-1$ where
    $\m=\max(m,n)$.
\end{definition} 

Throughout the section we will use the letter $\m$ to denote
$\max(m,n)$.
The following result tells us that 
the homological behaviour of $C_{m,n}$
is largely determined by $\m$ alone.

\begin{proposition}\label{proposition-independent}
    The bigraded path homology of $C_{m,n}$ 
    depends only on $\m=\max(m,n)$.
    More precisely, choose the shorter of the two directed intervals
    of $C_{m,n}$ (or either one if $m=n$), 
    and contract all of its edges except the last,
    to obtain a map $C_{m,n}\to C_{\m,1}$.
    This map is an isomorphism on bigraded path homology.
\end{proposition}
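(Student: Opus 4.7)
The plan is to deduce the result from the cofibration category structure of~\Cref{thm:cofib_cat}, by exhibiting the contraction map as the composition of a pushout of a weak equivalence along a cofibration with a loop-discarding map. Using the symmetry $C_{m,n}\cong C_{n,m}$ one may assume $n\leq m$, so $\m=m$ and the map in question takes the form $\phi\colon C_{m,n}\to C_{m,1}$, $v_i\mapsto x$ for $0\leq i\leq n-1$, $y\mapsto y$, $u_j\mapsto u_j$, where $v_0=x$ and $v_n=y$; one may further assume $n\geq 2$, since otherwise $\phi$ is the identity.

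The crucial first step is to let $A$ denote the induced subgraph of $C_{m,n}$ on $\{x,v_1,\ldots,v_{n-1}\}$---a directed path with $n-1$ edges---and show that $A\hookrightarrow C_{m,n}$ is a cofibration in the sense of~\Cref{def:cofib}. The no-entry condition is immediate, since every edge of $C_{m,n}$ whose target lies in $A$ also has source in $A$. The projection condition is handled by setting $\pi(v_i)=v_i$, $\pi(u_j)=x$, and $\pi(y)=v_{n-1}$, and performing a case analysis on pairs $(a,\xi)\in V(A)\times V(C_{m,n})$; the case $\xi=y$ relies on the assumption $n\leq m$ to give $d(x,y)=n=(n-1)+1=d(x,v_{n-1})+d(v_{n-1},y)$.

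Next, the terminal map $A\to\bullet$ is shown to be a weak equivalence. Defining $c_j\colon A\to A$ by $c_j(v_i)=v_{\max(0,i-j)}$ gives a family of valid graph maps with $c_{j+1}\rightsquigarrow_1 c_j$ for all $0\leq j\leq n-2$, so the constant map $c_{n-1}$ at $x$ is related to $\mathrm{Id}_A=c_0$ by a zig-zag of $1$-homotopies; by~\Cref{r-homotopy-invariance} this exhibits $A$ as $1$-homotopy equivalent, hence weakly equivalent, to $\bullet$. Forming the pushout $P:=C_{m,n}\cup_A\bullet$ in $\DiGraph$ and invoking the standard fact that pushouts along cofibrations preserve weak equivalences then furnishes a weak equivalence $C_{m,n}\to P$.

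The final step is to identify $P$ with $C_{m,1}$ up to a loop. A direct description of the pushout shows $P$ has vertex set $\{x,u_1,\ldots,u_{m-1},y\}$ and the same non-loop edges as $C_{m,1}$, together with a single loop at $x$ arising from the $n-1$ collapsed edges of $A$. The natural map $P\to C_{m,1}$ discarding this loop is a valid map of directed graphs (loops may always collapse) and induces an isomorphism on every page of the MPSS, since generators of the magnitude chain complex are tuples with distinct consecutive entries and so loops contribute nothing. The composite $C_{m,n}\to P\to C_{m,1}$ equals $\phi$, which completes the argument. The main obstacle in the whole plan is expected to be the distance case analysis establishing the cofibration property for $A\hookrightarrow C_{m,n}$; every other step then follows routinely from the machinery already developed in the paper.
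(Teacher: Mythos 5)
Your argument identifies exactly the same cofibration $A=A_{m,n}\hookrightarrow C_{m,n}$ as the paper, verifies it via the same projection function $\pi$, and uses the same pushout along this cofibration, so the structure of your proof is essentially the paper's. The one place your route genuinely diverges is in passing from the cofibration to the conclusion: you invoke Brown's lemma that pushing out a weak equivalence along a cofibration yields a weak equivalence, which you source from the cofibration category of~\Cref{thm:cofib_cat}, whereas the paper applies the excision theorem (\Cref{excision-all}) and the long exact sequences of~\Cref{thm:les-pair}, then the five lemma, directly. The mathematical content is identical---the excision-plus-five-lemma argument is precisely how one proves this instance of Brown's lemma here---but there is a concrete cost to the packaging: \Cref{thm:cofib_cat} is only established for $R$ a P.I.D.~(the K\"unneth theorem enters in verifying axiom (C5)), while the excision route works over an arbitrary commutative ground ring, and the proposition carries no hypothesis on $R$. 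As stated, your proof quietly narrows the scope of the result; the remedy is to replace the appeal to the cofibration category by the direct ladder argument.

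Two smaller remarks. First, the loop-discarding step at the end is a wrinkle the paper does not encounter, because in $\DiGraph$ a directed graph and the same graph with an added loop are canonically isomorphic: the identity on vertices is a map of directed graphs in both directions, since a loop imposes no condition on a graph map. So the pushout $C_{m,n}\cup_A\bullet$ is already isomorphic to $C_{m,1}$ in $\DiGraph$, and the separate argument about loops and magnitude chains, while correct, is unnecessary. Second, the distance case analysis establishing the cofibration property for $A\hookrightarrow C_{m,n}$, which you flagged as the likely obstacle, goes through and agrees with the paper's verification; the only case where the inequality $n\le m$ is actually used is $\xi = y$, exactly as you say.
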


This proposition will be used to reduce our task to computing the
MPSS for the graphs $C_{m,1}$.
Although it is possible to compute the MPSS for all $C_{m,n}$
directly, that is significantly more onerous,
and in particular requires the tedious separation of the 
cases $m=n$ and $m\neq n$.

\begin{theorem}[Bigraded path homology and MPSS of bi-directed cycles]
\label{theorem-mn-cycles-omnibus}
    Let $m,n\geq 1$ and assume further that $\m=\max(m,n)\geq 3$.
    Then the bigraded path homology $\PH_{\ast,\ast}(C_{m,n})$
    is concentrated in bidegrees $(0,0)$, $(1,0)$ and $(\m,-(\m-2))$,
    in each of which it is free of rank $1$.
    Moreover, the MPSS of $C_{m,n}$ satisfies  
    $E^2(C_{m,n})=\cdots=E^{\m-1}(C_{m,n})$
    while $E^\m(C_{m,n})$ is trivial, 
    consisting of a single copy of $R$ in bidegree $(0,0)$.
    If $\m=1,2$ then the bigraded path homology is concentrated in bidegree
    $(0,0)$, where it is free of rank $1$.
\end{theorem}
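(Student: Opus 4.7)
The plan is to use \Cref{proposition-independent} to reduce from $C_{m,n}$ to the bi-directed cycle $C_{\m,1}$. The contraction map furnished by that proposition is a morphism of spectral sequences which is an isomorphism on $E^2$, and hence---by the standard fact that a morphism of spectral sequences inducing an isomorphism on one page induces isomorphisms on all subsequent pages---it is an isomorphism on $E^r$ for every $r \geq 2$. So it suffices to analyse the MPSS of $C_{\m,1}$ from the $E^2$-page onwards. When $\m \leq 2$, the graph $C_{\m,1}$ is $1$-contractible (a single edge when $\m=1$, and the transitive triangle $v_0 \to v_1 \to v_2$ with shortcut $v_0 \to v_2$ when $\m = 2$), so by \Cref{r-homotopy-invariance} its bigraded path homology is concentrated in bidegree $(0,0)$ where it is a single copy of $R$. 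This handles the last sentence of the theorem, and for the remainder of the plan I assume $\m \geq 3$.

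The heart of the argument is the computation of $\MH_{\ast,\ast}(C_{\m,1})$. Label the vertices of $C_{\m,1}$ as $v_0, \ldots, v_\m$ with edges $v_i \to v_{i+1}$ for $0 \leq i < \m$ together with the shortcut edge $v_0 \to v_\m$. Every reachability chain is a strictly increasing sequence $(v_{i_0}, \ldots, v_{i_k})$, and its length equals $i_k - i_0$ with one exceptional case: the single shortcut chain $(v_0, v_\m)$ has length $1$ rather than $\m$. Decomposing $\MC_{\ast,\ast}(C_{\m,1})$ by length, lengths $0$ and $1$ contribute free modules of rank $\m+1$ on the diagonal; lengths $\ell$ with $2 \leq \ell \leq \m-1$ coincide with the corresponding pieces of $\MC_{\ast,\ast}(P_\m)$, and each sub-complex indexed by a pair $(i_0, i_k)$ with $i_k - i_0 = \ell$ is isomorphic (after a degree shift) to the augmented simplicial chain complex of the $(\ell-2)$-simplex and is therefore acyclic; and length $\m$ consists of chains $(v_0, v_{j_1}, \ldots, v_{j_{k-1}}, v_\m)$ with $k \geq 2$. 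In this last piece, the would-be $k=1$ generator $(v_0, v_\m)$ has migrated to length $1$ because of the shortcut, so what would have been an augmented simplicial chain complex becomes unaugmented; its homology contributes a single copy of $R$ to $\MH_{2,\m}(C_{\m,1})$, generated by the class of $(v_0, v_j, v_\m)$ for any $0 < j < \m$.

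With the magnitude homology in hand, the $d^1$-differential on $E^1$ is the oriented edge boundary of the underlying graph, whose kernel and cokernel are each $R$ because $C_{\m,1}$ is topologically a cycle. Combined with the isolated class in $\MH_{2,\m}(C_{\m,1})$, this gives $E^2(C_{\m,1})$ equal to a single copy of $R$ in each of the bidegrees $(0,0)$, $(1,0)$, and $(\m, -(\m-2))$, and zero elsewhere. The positions of these three classes leave only one possible non-zero higher differential, namely $d^{\m-1}\colon E^{\m-1}_{\m, -(\m-2)} \to E^{\m-1}_{1,0}$, so $E^2 = E^3 = \cdots = E^{\m-1}$ automatically. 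To close the argument, observe that $C_{m,n}$ is long contractible (the identity is long-homotopic to the constant map at the common terminal vertex of the two directed intervals), so $\RH_\ast(C_{m,n})$ is a single copy of $R$ in degree $0$; for $E^\infty$ to reproduce this, the classes in bidegrees $(1,0)$ and $(\m, -(\m-2))$ must die, and the only available differential is $d^{\m-1}$, which must therefore be an isomorphism---leaving $E^\m$ concentrated in $(0,0)$. The main obstacle in the plan is the length-by-length analysis of $\MC_{\ast,\ast}(C_{\m,1})$: in particular, identifying the length-$\m$ sub-complex as the \emph{unaugmented} simplex chain complex rather than the augmented one, which comes down to tracking exactly which terms of the magnitude boundary are suppressed because the shortcut makes $d(v_0,v_\m)$ differ from~$\m$.
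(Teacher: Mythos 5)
Your proposal is correct and follows essentially the same route as the paper: reduce to $C_{\m,1}$ via \Cref{proposition-independent} (noting that an isomorphism on one page propagates to all later pages), compute $\MH_{\ast,\ast}(C_{\m,1})$ by decomposing the magnitude chains by endpoints and length, observe that only the $(1,0)\to(0,0)$ edge-boundary and the isolated class in $\MH_{2,\m}$ survive to $E^2$, and then use the vanishing of reachability homology in positive degrees to force $d^{\m-1}$ to be an isomorphism. The only cosmetic difference is that you identify the endpoint subcomplexes with (augmented, resp.\ unaugmented) simplicial chain complexes of simplices, whereas the paper writes down the corresponding explicit chain contractions; these are the same argument.
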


\begin{corollary}
    The MPSS of $C_{m,n}$ depends only on the value of $\m=\max(m,n)$.
    The \(E^r\)-page of the MPSS of $C_{m,n}$
    determines $\m$ for \(\m \geq r\), 
    and is trivial for \(\m \leq r\). 
    In particular, bigraded path homology 
    determines the value of $\m$ for  \(\m \geq 2\).
\end{corollary}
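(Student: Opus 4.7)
The plan is to reduce the problem to the case $C_{\m, 1}$ using Proposition \ref{proposition-independent}, to compute the MPSS of $C_{\m, 1}$ directly, and then to transport the conclusions back to the general $C_{m,n}$ by functoriality. For the small cases $\m = 1, 2$, the graph $C_{\m, 1}$ has diameter at most $1$ and is $1$-contractible (its terminal vertex absorbs everything in a single step), so by $1$-homotopy invariance its $E^2$-page, and hence the bigraded path homology of every $C_{m,n}$ with $\max(m,n) \leq 2$, is concentrated in bidegree $(0,0)$ with value $R$.

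Now suppose $\m \geq 3$. Every magnitude chain generator of $C_{\m, 1}$ is a strictly increasing tuple $(u_{a_0}, \ldots, u_{a_k})$ of vertex indices, and its length equals $a_k - a_0$ except for the $2$-tuple $(u_0, u_\m)$ itself, for which the shortcut reduces the length from $\m$ to $1$. Thus $\MC_{\ast,\ast}(C_{\m, 1})$ agrees with $\MC_{\ast,\ast}(I_\m)$ except that a single generator has moved from bidegree $(1,\m)$ to $(1,1)$. Decomposing each length-$\ell$ piece into subcomplexes indexed by the endpoints $(a_0, a_k) = (a, b)$, every subcomplex other than the one for $(0, \m)$ behaves exactly as in $I_\m$ and contributes only diagonal classes in $(0,0)$ and $(1,1)$. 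The new phenomenon lies in the length-$\m$ part of the $(0, \m)$-subcomplex: with the $2$-tuple generator removed, it becomes isomorphic to the unreduced simplicial chain complex of $\Delta^{\m-2}$ shifted up by $2$, and contributes one new homology class in bidegree $(2, \m)$. One concludes that
\[
\MH_{0,0}(C_{\m, 1}) = R^{\m+1}, \qquad \MH_{1,1}(C_{\m, 1}) = R^{\m+1}, \qquad \MH_{2,\m}(C_{\m, 1}) = R,
\]
with every other bidegree zero. A direct computation of $d^1$ (sending $(u_i,u_j)$ to $(u_j)-(u_i)$, while vanishing out of $\MH_{2,\m}$ for lack of a target) yields the claimed $E^2$-page: a single copy of $R$ at each of $(0,0)$, $(1,0)$, $(\m, 2-\m)$ and zero elsewhere.

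For the higher pages, note that a differential $d^r$ with $2 \leq r \leq \m-2$ issuing from $(\m, 2-\m)$ lands at $(\m-r, r-\m+1)$, whose second coordinate is negative, and all other potential nonzero differentials are ruled out on degree grounds. Thus $E^2 = \cdots = E^{\m-1}$. The terminal vertex $u_\m$ makes $C_{\m, 1}\to\bullet$ a long-homotopy equivalence, so $\RH_\ast(C_{\m, 1})$ is trivial in positive degrees; combined with the fact that $C_{\m, 1}$ has diameter $\m - 1$ (so $E^r = E^\infty$ for $r \geq \m$), this forces $d^{\m-1} \colon E^{\m-1}_{\m, 2-\m} \to E^{\m-1}_{1, 0}$ to be an isomorphism and $E^\m$ to be trivial. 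These conclusions transfer to $C_{m,n}$ by functoriality: the contraction $C_{m,n} \to C_{\m, 1}$ is an isomorphism on $E^2$ by Proposition \ref{proposition-independent}, hence on $E^{\m-1}$ by the same vanishing argument, and the resulting commutative square carries the isomorphism $d^{\m-1}$ across. The principal technical obstacle will be the length-$\m$ subcomplex computation, namely verifying that removing its \emph{augmentation} leaves exactly one class in degree $2$ rather than collapsing the complex altogether.
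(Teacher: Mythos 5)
Your proposal is correct and follows essentially the same route as the paper: reduce to $C_{\m,1}$ via Proposition~\ref{proposition-independent}, compute $\MH_{\ast,\ast}(C_{\m,1})$ by decomposing magnitude chains according to the endpoints of tuples (your identification of the length-$\m$ piece of the $(a_0,a_\m)$-subcomplex with the shifted, unaugmented simplicial chains of $\Delta^{\m-2}$ is just a repackaging of the paper's explicit contracting homotopy, which is the standard cone contraction of that simplex), then compute $d^1$, rule out $d^2,\ldots,d^{\m-2}$ by bidegree, and force $d^{\m-1}$ to be an isomorphism from the triviality of reachability homology. The only cosmetic gap is that "the second coordinate of the target is negative" does not by itself kill a differential (the entry at $(\m,2-\m)$ also has negative second coordinate); one should note that the target's first coordinate $\m-r$ lies strictly between $1$ and $\m$, so it misses all three nonzero positions.
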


In this section we will describe the  MPSS for $C_{m,n}$
from the $E^2$-term onwards.
In the case $\m=2$, the bigraded path homology of $C_{m,n}$ is
trivial, consisting of a single copy of $R$ in degree $(0,0)$.
For $\m\geq 3$, the bigraded path homology 
for $C_{m,n}$ is given as  follows:
\[
    \begin{tikzpicture}[baseline=0,scale=2]
        \draw[-stealth, ultra thick,white!90!black] (-0.75,0) to (3.5,0);
        \draw[-stealth, ultra thick,white!90!black] (0,0.5) to (0,-1.5);
        \node (a) at (0,0) {$R$};
        \node (b) at (1,0) {$R$};
        \node (c) at (3,-1) {$R$};
        \node() at (0,0.3) {$\scriptstyle 0$};
        \node() at (1,0.3) {$\scriptstyle 1$};
        \node() at (3,0.3) {$\scriptstyle \m$};
        \node() at (-0.5,0) {$\scriptstyle 0$};
        \node() at (-0.5,-1) {$\scriptstyle -(\m-2)$};
        \draw[-stealth] (c) to node[below left]{$\scriptstyle d^{\m-1}$} (b);
        \node[draw]() at (-2,0) {$E^2_{\ast,\ast}(C_{m,n})$};
    \end{tikzpicture}
\]
The MPSS is then determined by the value of the lone remaining
differential, which is $d^{\m-1}$ as shown.
Since $C_{m,n}$ has both initial and terminal vertices,
its reachability homology is trivial, 
and therefore $E^\infty$ vanishes in positive total degrees.
Thus in $E^{\m-1}$, the terms in degrees
$(1,0)$ and $(\m,-(\m-2))$ cannot survive, 
and $d^{\m-1}$ must therefore be an isomorphism.
(The apparent separation between the cases $\m=2$ and $\m\geq 3$
arises because, when $\m=2$, the $d^{\m-1}$ differential
takes place on the $E^1$ page.)
Thus, we see that $E^2_{\ast,\ast}(C_{m,n})$ determines
the value of $\m$ via the placement of its nonzero groups.
The MPSS of $C_{m,n}$ also determines $\m$ as the first
page that is trivial, i.e.~concentrated in degree
$(0,0)$.

We now move to the proof of~\Cref{proposition-independent}.
We now assume without loss that $m\geq n$ so that $\m=m$.
Let us write $A_{m,n}$ for the subgraph of $C_{m,n}$
consisting of the first $n-1$ edges of the 
directed interval of length $n$, which in our diagrams we think
of as going across the bottom.
To make it easier to visualise the arguments to come,
it can help to redraw $C_{m,n}$ and $A_{m,n}$
in the following manner.
\[
    \begin{tikzpicture}[baseline=0]
        \node(x) at (0,0) {$\bullet$};
        \node(y) at (3,1) {$\bullet$};

        \node (a2) at (0,1) {$\bullet$};
        \node (a3) at (1,1) {$\bullet$};
        \node (a4) at (2,1) {$\bullet$};

        \node (b2) at (1.5,0) {$\bullet$};
        \node (b3) at (3,0) {$\bullet$};
        
        \draw[-stealth] (x) -- (a2);
        \draw[-stealth] (a2) -- (a3);
        \draw[-stealth] (a3) -- (a4);
        \draw[-stealth] (a4) -- (y);

        \draw[-stealth] (x) -- (b2);
        \draw[-stealth] (b2) -- (b3);
        \draw[-stealth] (b3) -- (y);
        
       \node () at (1.5,0.5) {$C_{4,3}$};
 
       \draw [
        decorate, 
        decoration = 
            {
                brace,
                raise=5pt,
                amplitude=5pt,
            }
        ] 
            (3.3,0) --  (-0.3,0)
            node[pos=0.5,below=10,black]{$A_{4,3}$};
    \end{tikzpicture}
\]

\begin{lemma}
    Let $m,n\geq 1$ with $m\geq n$ and $m\geq 2$.
    Then the inclusion $A_{m,n}\hookrightarrow C_{m,n}$
    is a cofibration.
\end{lemma}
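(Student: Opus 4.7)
The plan is to verify the conditions of \Cref{def:cofib} directly. Label the vertices of $C_{m,n}$ so that $v_0$ and $v_\infty$ are the initial and terminal vertices, the top interval reads $v_0\to w_1\to\cdots\to w_{m-1}\to v_\infty$, and the bottom interval reads $v_0\to u_1\to\cdots\to u_{n-1}\to v_\infty$ (with no intermediate $u_j$ when $n=1$). By construction, $A_{m,n}$ is the subgraph on vertex set $\{v_0,u_1,\ldots,u_{n-1}\}$ with edges $v_0\to u_1,\ldots,u_{n-2}\to u_{n-1}$, and since these are all the edges of $C_{m,n}$ between vertices of $A_{m,n}$, the inclusion is indeed an induced subgraph inclusion.

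The first condition of \Cref{def:cofib} is nearly immediate: the vertices outside $A_{m,n}$ are $w_1,\ldots,w_{m-1}$ and $v_\infty$, and the edges of $C_{m,n}$ that end in $A_{m,n}$ are precisely the first $n-1$ edges of the bottom interval, each of which begins at another vertex of $A_{m,n}$.

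For the second condition, I first observe that $v_0\in A_{m,n}$ can reach every vertex of $C_{m,n}$, so $rA_{m,n}=V(C_{m,n})$. I then define the projection $\pi\colon V(C_{m,n})\to V(A_{m,n})$ to be the identity on $A_{m,n}$, to send each $w_i$ to $v_0$, and to send $v_\infty$ to $u_{n-1}$ (interpreted as $v_0$ when $n=1$). The motivation is that $v_0$ is the only vertex of $A_{m,n}$ from which any $w_i$ can be reached, so $\pi(w_i)$ is forced to be $v_0$; while $u_{n-1}$ is the vertex of $A_{m,n}$ nearest to $v_\infty$.

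It then remains to verify the distance equation $d(a,x)=d(a,\pi(x))+d(\pi(x),x)$ for each $a\in A_{m,n}$ and each $x\in\{w_1,\ldots,w_{m-1},v_\infty\}$. This is a short case analysis depending on whether $a=v_0$ or $a=u_j$ for some $j\geq 1$: in the cases $x=w_i$ with $a=u_j$, both sides are infinite, because no path in $C_{m,n}$ leads from $u_j$ back to $v_0$ or onto the top interval. The case $x=v_\infty$, $a=v_0$ uses the hypothesis $m\geq n$, to ensure that $d(v_0,v_\infty)=n$ (achieved along the bottom). I do not foresee any substantive obstacle: the verification is entirely bookkeeping with explicit paths in $C_{m,n}$.
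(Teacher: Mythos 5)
Your proposal is correct and takes essentially the same approach as the paper: you define the same projection $\pi$ (identity on $A_{m,n}$, the top-interval vertices $w_i$ to the initial vertex, the terminal vertex $v_\infty$ to $u_{n-1}$) and verify the two cofibration conditions by direct case analysis. The one case where the hypothesis $m\geq n$ genuinely matters—namely $a=v_0$, $x=v_\infty$—is correctly identified, matching the paper's observation that the shortest $v_0\to v_\infty$ path travels through $\pi(v_\infty)$ along the lower interval.
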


\begin{proof}
    There are certainly no edges into $A_{m,n}$ from vertices
    not in $A_{m,n}$, and so the first condition of a cofibration
    holds.
    Observe that the reach of $A_{m,n}$ is then the whole of
    $C_{m,n}$, since the initial vertex of $A_{m,n}$ can
    reach every other.
    We may now define $\pi$ as follows:
    \begin{itemize}
        \item Each vertex of $A_{m,n}$ is sent to itself.
        \item The terminal vertex of $C_{m,n}$
        is sent to the terminal vertex of $A_{m,n}$.
        \item The remaining vertices of $C_{m,n}$ are sent to the initial vertex of $A_{m,n}$.
    \end{itemize}
    Observe that in all cases there is a path from $\pi(x)$
    to $x$.
    For example, in the case of $C_{4,3}$ the map $\pi$ 
    identifies all vertices of the same colour in this diagram:
    \[
        \begin{tikzpicture}[baseline=0]
            \node[red](x) at (0,0) {$\bullet$};
            \node[blue](y) at (3,1) {$\bullet$};
    
            \node[red] (a2) at (0,1) {$\bullet$};
            \node[red] (a3) at (1,1) {$\bullet$};
            \node[red] (a4) at (2,1) {$\bullet$};
    
            \node[green] (b2) at (1.5,0) {$\bullet$};
            \node[blue] (b3) at (3,0) {$\bullet$};
            
            \draw[red,-stealth] (x) -- (a2);
            \draw[red,-stealth] (a2) -- (a3);
            \draw[red,-stealth] (a3) -- (a4);
            \draw[dotted,-stealth] (a4) -- (y);
    
            \draw[dotted,-stealth] (x) -- (b2);
            \draw[dotted,-stealth] (b2) -- (b3);
            \draw[blue,-stealth] (b3) -- (y);
        \end{tikzpicture}
    \]
    
    With respect to the given map $\pi$, we may now verify
    the second condition of a cofibration, namely that
    \[
        d(a,x)=d(a,\pi(x))+ d(\pi(x),x)
    \]
    for $x\in C_{m,n}$ and $a\in A_{m,n}$.
    If $a$ and $x$ are, respectively, the initial and final 
    vertices of $C_{m,n}$, then there are two distinct paths
    from $a$ to $x$, but the shortest distance is always given
    by the path that travels through $\pi(x)$ (the lower path),
    and consequently the condition holds in this case.
    In all other cases there is at most one path
    from $a$ to $x$, and that path passes through $\pi(x)$,
    so that the condition holds in these cases too.
\end{proof}

\begin{proof}[Proof of~\Cref{proposition-independent}]
    We may assume that $m\geq n$ and $m\geq 2$.
    Then the map from the statement takes the form
    $C_{m,n}\to C_{m,1}$,  
    and fits into a pushout diagram
    \[\begin{tikzcd}
        A_{m,n} \arrow{r} \arrow{d} & A_{m,1} \arrow{d} \\
        C_{m,n} \arrow{r} & C_{m,1}
    \end{tikzcd}\]
    whose upper map collapses $A_{m,n}$ to $A_{m,1}$,
    the latter being a single vertex.
    The maps 
    $\PH_{\ast,\ast}(A_{m,n})\to \PH_{\ast,\ast}(A_{m,1})$
    are isomorphisms since $A_{m,n}\to A_{m,1}$ 
    is a $1$-homotopy equivalence.
    And the maps 
    $\PH_{\ast,\ast}(C_{m,n},A_{m,n})
    \to 
    \PH_{\ast,\ast}(C_{m,1},A_{m,1})$
    are isomorphisms by
    the excision theorem, which applies since the 
    vertical maps of our pushout are cofibrations.
    Then the long exact sequences of the pairs 
    $(C_{m,n},A_{m,n})$ 
    and  
    $(C_{m,1},A_{m,1})$
    are related by a commutative ladder,
    and the last sentence allows us to 
    apply the five~lemma to this ladder to
    obtain the result.
\end{proof}

Let us now turn to the MPSS of $C_{m,1}$ for $m\geq 2$.
To do so we need to establish some notation.
We write the vertices of $C_{m,1}$ as $a_0,\ldots,a_m$, 
with edges $a_{i-1}\to a_i$ for $i=1,\ldots,m$,
and $a_0\to a_m$.
So for $C_{4,1}$ we have the following:
\[
    \begin{tikzpicture}[baseline=0]
        \node(x) at (180:1.5) {$a_0$};
        \node(y) at (0:1.5) {$a_m$};

        \node (a2) at (135:1.5) {$a_1$};
        \node (a3) at (90:1.5) {$a_2$};
        \node (a4) at (45:1.5) {$a_3$};

        \draw[-stealth] (x) -- (a2);
        \draw[-stealth] (a2) -- (a3);
        \draw[-stealth] (a3) -- (a4);
        \draw[-stealth] (a4) -- (y);

        \draw[-stealth] (x) -- (y);
        
        \node () at (0,0.6) {$C_{4,1}$};
    \end{tikzpicture}
\]
Next, we define some magnitude homology classes.

\begin{definition}
    We define magnitude homology classes 
    \[
        \kappa_x\in\MH_{0,0}(C_{m,1}),
        \qquad
        \lambda_e\in\MH_{1,1}(C_{m,1}),
        \qquad
        \mu\in\MH_{2,m}(C_{m,1}),
    \]
    for $x\in V(C_{m,1})$ and $e\in E(C_{m,1})$,
    by the rules
    \[
        \kappa_x = [(x)], 
        \qquad 
        \lambda_e = [(a,b)],
        \qquad
        \text{and\ }
        \mu = [(a_0,a_i,a_m)]
    \]
    where $e=ab$.
    In the definition of $\mu$ one can make any choice of
    $1<i<m$; the value of $\mu$ does not change.
    Note that in the spectral sequence grading we have
    \[
        \kappa_x\in E^1_{0,0}(C_{m,1}),
        \qquad
        \lambda_e\in E^1_{1,0}(C_{m,1}),
        \qquad
        \mu\in E^1_{m,-m+2}(C_{m,1}).
    \] 
\end{definition}

We can now state our main result.

\begin{theorem}\label{theorem-mn-cycle}
    Let $m\geq 2$.
    The magnitude homology $\MH_{\ast,\ast}(C_{m,1})$
    is the free bigraded $R$-module with basis given by the
    $\kappa_x$, $\lambda_e$ and $\mu$ for $x\in V(C_{m,1})$
    and $e\in E(C_{m,1})$.
    The differential $d^1$ acts on these classes by the rules
    $d^1(\kappa_x)=0$,
    $d^1(\lambda_e) = \kappa_b - \kappa_a$
    when $e=ab$,
    and, if $m=2$,
    $d^1(\mu) = 
    \lambda_{a_0,a_1}+\lambda_{a_1,a_2}
    -\lambda_{a_0,a_2}$.
\end{theorem}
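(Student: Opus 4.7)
The plan is to compute $\MC_{\ast,\ell}(C_{m,1})$ explicitly for every $\ell$, and then to read off $d^1$ directly. The starting observation is that the magnitude differential preserves the first and last entries of a tuple, so the complex decomposes as a direct sum $\MC_{\ast,\ell}(C_{m,1}) = \bigoplus_{(a,b)} \MC_{\ast,\ell}^{a,b}$ indexed by ordered pairs of vertices. Because the distances in $C_{m,1}$ are highly constrained---once a tuple leaves $a_0$, the vertex indices must strictly increase until reaching $a_m$---the generators of each summand are straightforward to list.

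With the decomposition in hand, I would identify each summand. A diagonal pair $(a_i,a_i)$ contributes only the singleton $(a_i)$, producing the $m+1$ generators $\kappa_{a_i}$ of $\MH_{0,0}$. For each edge $e=ab$ (including the shortcut $a_0\to a_m$), the pair $(a,b)$ is at distance $1$ and contributes a single generator in degree $1$, yielding the $m+1$ classes $\lambda_e$ in $\MH_{1,1}$. For any other reachable pair $(a_i,a_j)$ at distance $j-i\geq 2$ (including $i=0$ with $j<m$), the length-$(j-i)$ subcomplex is spanned in degree $k$ by tuples $(a_i,a_{s_1},\ldots,a_{s_{k-1}},a_j)$ with $i<s_1<\cdots<s_{k-1}<j$; deleting any interior $a_{s_t}$ preserves the total length because the intermediate indices are strictly monotone. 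The magnitude differential therefore agrees with the ordinary simplicial boundary on the intermediate subsets, realizing the subcomplex as a shift of the augmented simplicial chain complex of a nonempty simplex. Such a complex is acyclic, so these summands contribute nothing to $\MH_{\ast,\ast}$.

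The distinguished pair is $(a_0,a_m)$, which supports a \emph{second} nontrivial subcomplex at length $\ell=m$ arising from the fact that every path through the top arc has length exactly $m$, whereas the direct edge has length $1$ and lives in a separate length-$1$ subcomplex. This length-$m$ subcomplex $\MC_{\ast,m}^{a_0,a_m}$ is spanned in degree $k$ by tuples $(a_0,a_{s_1},\ldots,a_{s_{k-1}},a_m)$ with $1\leq s_1<\cdots<s_{k-1}\leq m-1$; once again the magnitude differential is simplicial, but this time there is no degree-$1$ generator, since the tuple $(a_0,a_m)$ has length $1$, not $m$, and has been peeled off into the other subcomplex. Consequently this subcomplex is a shift of the (non-augmented) simplicial chain complex of $\Delta^{m-2}$, which is contractible, so its homology is a single copy of $R$ in bidegree $(2,m)$, generated by $\mu=[(a_0,a_i,a_m)]$ for any $1\leq i\leq m-1$.

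It remains to compute $d^1$ by the recipe from \Cref{subsection:E2}. For $\kappa_x$ the target bidegree is zero. For $\lambda_e$ with $e=ab$, the reachability differential gives $\partial(a,b)=(b)-(a)$, of length $0$, so $d^1(\lambda_e)=\kappa_b-\kappa_a$. For $\mu$ in the case $m=2$, representing by $(a_0,a_1,a_2)$ and applying the reachability differential gives $(a_1,a_2)-(a_0,a_2)+(a_0,a_1)$, each summand of length $1=m-1$; no term is truncated, so we obtain the claimed formula. The main technical obstacle throughout this plan is the identification of the magnitude differential with a simplicial boundary, which relies on the fact that deleting an interior vertex from any geodesic tuple in $C_{m,1}$ preserves the total length---once the anomalous tuple $(a_0,a_m)$ has been separated off, every remaining geodesic has indices that strictly increase along the path, so the pertinent distances telescope cleanly.
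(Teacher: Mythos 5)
Your proposal is correct and follows essentially the same route as the paper: decompose the magnitude chain complex by endpoint pairs, observe that every summand other than those producing $\kappa_x$, $\lambda_e$ and $\mu$ is acyclic (with the pair $(a_0,a_m)$ treated specially because the short edge splits off into length $1$), and compute $d^1$ by applying the reachability differential to representatives. The only presentational difference is that where you identify each summand with an augmented or non-augmented simplicial chain complex of a simplex and invoke contractibility, the paper writes out the corresponding cone contraction $s$ explicitly (inserting the vertex adjacent to the initial point), which is exactly the homotopy witnessing the contractibility you use.
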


We immediately obtain the following description of the bigraded
path homology.

\begin{corollary}
    If $m>2$ then the bigraded path homology of $C_{m,1}$ consists
    of a single copy of $R$ in bidegrees $(0,0)$, $(1,0)$
    and $(m,-m+2)$.
    If $m=2$ then the bigraded path homology of $C_{m,1}$ 
    vanishes in positive degrees.
\end{corollary}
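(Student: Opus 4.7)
The corollary follows immediately from Theorem~\ref{theorem-mn-cycle} by taking homology of the $E^1$-page with respect to $d^1$. Theorem~\ref{theorem-mn-cycle} describes $E^1_{\ast,\ast}(C_{m,1})$ as being concentrated in three spectral-sequence bidegrees, namely $(0,0)$, $(1,0)$, and $(m,-m+2)$, with explicit bases $\{\kappa_x\}_{x \in V(C_{m,1})}$, $\{\lambda_e\}_{e \in E(C_{m,1})}$ and $\{\mu\}$ respectively, together with the values of $d^1$ on those generators. The plan is to read off the homology of this two-term-or-three-term chain complex in the cases $m>2$ and $m=2$ separately.

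First I would observe that, for purely dimensional reasons, the only $d^1$ differentials that can possibly be nontrivial are the one from $(1,0)$ to $(0,0)$ and, when $m=2$, the one from $(2,0) = (m,-m+2)$ to $(1,0)$; all other $d^1$'s have zero source or target. Next I would analyse the map $\lambda_{ab} \mapsto \kappa_b - \kappa_a$ from $E^1_{1,0}$ to $E^1_{0,0}$. This is the standard $1$-skeleton boundary map for the underlying undirected graph of $C_{m,1}$, which is a cycle on $m+1$ vertices, so it has kernel and cokernel each free of rank $1$: the kernel is generated by the signed sum $\lambda_{e_1}+\cdots+\lambda_{e_m}-\lambda_{f}$ traversing the outer cycle, and the cokernel is generated by the class of any single $\kappa_x$.

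For $m>2$, the outgoing $d^1$ from $\mu$ lands in $E^1_{m-1,-m+2} = \MH_{1,m-1}(C_{m,1})$, which vanishes by Theorem~\ref{theorem-mn-cycle}, and there is no incoming $d^1$; hence $E^2_{m,-m+2} = R$. Combined with the computation in the previous step this yields exactly the three claimed copies of $R$. For $m=2$, Theorem~\ref{theorem-mn-cycle} gives $d^1(\mu) = \lambda_{a_0a_1} + \lambda_{a_1a_2} - \lambda_{a_0a_2}$, which is precisely the generator of the rank-one kernel just identified; hence $d^1\colon E^1_{2,0} \to E^1_{1,0}$ is injective onto that kernel, so both $E^2_{1,0}$ and $E^2_{2,0}$ vanish and only $E^2_{0,0} = R$ survives. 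There is no real obstacle in this argument; the whole corollary is a bookkeeping exercise once the content of Theorem~\ref{theorem-mn-cycle} is in hand.
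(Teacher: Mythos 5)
Your proposal is correct and takes the same route as the paper, which states the corollary without proof as an immediate consequence of Theorem~\ref{theorem-mn-cycle}: one simply takes homology of the $E^1$-page with respect to the $d^1$ described there. Your bookkeeping (rank-one kernel and cokernel of the edge-boundary map on the underlying $(m+1)$-cycle, $d^1(\mu)=0$ for degree reasons when $m>2$, and $d^1(\mu)$ hitting exactly the kernel generator when $m=2$) is exactly what the paper leaves implicit.
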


Let us turn to the proof of Theorem~\ref{theorem-mn-cycle}.
For the final sentence of the statement we have the following.

\begin{lemma}
    The description of $d^1$ in
    Theorem~\ref{theorem-mn-cycle} holds.
\end{lemma}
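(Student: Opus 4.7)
The plan is to apply, case-by-case, the explicit recipe for the $d^1$ differential of the MPSS given at the end of \Cref{subsection-E0}: take a representative magnitude cycle, lift to $\RC_\ast$, apply the reachability differential, and discard all summands whose length has dropped below the relevant filtration level before taking the class in the appropriate magnitude homology group. The three cases are of increasing, but still very modest, difficulty.

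The first two cases are essentially automatic. The class $\kappa_x$ lies in $\MH_{0,0}(C_{m,1})$, and in the spectral sequence grading the differential $d^1$ on $\kappa_x \in E^1_{0,0}$ lands in $E^1_{-1,0}=\MH_{-1,-1}(C_{m,1}) = 0$, so $d^1(\kappa_x) = 0$ for degree reasons. For $\lambda_e = [(a,b)]$ with $e = ab$, the representative $(a,b) \in \MC_{1,1}$ lifts to an element of $\RC_1$ whose reachability differential is $(b) - (a) \in \RC_0$; both summands have length $0$ and survive as generators of $\MC_{0,0}$, producing $d^1(\lambda_e) = \kappa_b - \kappa_a$.

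The only case that requires genuine attention is $\mu$ in $C_{2,1}$. Here $i=1$ is the only admissible choice, so $\mu = [(a_0,a_1,a_2)]$. Lifting to $\RC_2$ and applying the reachability differential gives
\[
    \partial(a_0, a_1, a_2) = (a_1, a_2) - (a_0, a_2) + (a_0, a_1).
\]
The summand that must be checked carefully is the middle one: one has to verify that $d(a_0, a_2) = 1$, so that $(a_0,a_2)$ lies in $\MC_{1,1}(C_{2,1})$ and survives the quotient rather than being discarded. This is where the hypothesis $m=2$ is used in an essential way, since it guarantees the presence of the direct edge $a_0 \to a_2$. The remaining two summands trivially have length $1$, so the image in $\MH_{1,1}(C_{2,1})$ is
\[
    d^1(\mu) = \lambda_{a_1 a_2} - \lambda_{a_0 a_2} + \lambda_{a_0 a_1},
\]
matching the claimed formula.

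Thus the only real content of the proof is the length bookkeeping in the last case; everything else is a direct unwinding of the definition of $d^1$. I would expect no serious obstacle, and the write-up should amount to little more than the three short computations above.
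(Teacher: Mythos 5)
Your proposal is correct and follows essentially the same route as the paper, which simply applies the reachability differential to a representative and records which summands survive the length filtration (the paper writes out only the $\mu$ case for $m=2$ and leaves $\kappa_x$ and $\lambda_e$ to the reader, whereas you supply all three). Your explicit check that $d(a_0,a_2)=1$ in $C_{2,1}$, so that the middle summand is not discarded, is exactly the point that makes the $\mu$ computation go through.
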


\begin{proof}
    The map $d^1$ is obtained by applying the differential
    of the reachability complex to any representative.
    Thus, for example, when $m=2$ we have
    $d^1(\mu)=d^1[(a_0,a_1,a_2)]
    =[d(a_0,a_1,a_2)]
    =[(a_1,a_2)-(a_0,a_2)+(a_0,a_1)]
    =[(a_1,a_2)]-[(a_0,a_2)]+[(a_0,a_1)]
    =\lambda_{a_1a_2}-\lambda_{a_0a_2}+\lambda_{a_0a_1}$
    as claimed.
    The other cases are similar and left to the reader.
\end{proof}

In order to give our proof of~\Cref{theorem-mn-cycle}, 
we use the decomposition of
magnitude chains and magnitude homology by initial and final
points of tuples, which we now recall.
(The decomposition is well known,
and to the best of our knowledge its first 
explicit use is in Section~5.4 of~\cite{KanetaYoshinaga}.)
Let $G$ be a directed graph. 
Let $a,b$ be vertices of $G$, possibly equal,
and let $\ell\geq 0$.
Then we write $\MC_{\ast,\ell}(a,b)$ for the subcomplex of
$\MC_{\ast,\ell}(G)$ spanned by all tuples of the form
$(x_0,\ldots,x_k)$ with $x_0=a$ and $x_k=b$,
and we write $\MH_{\ast,\ell}(a,b)$ for its homology.
Note that $\MC_{\ast,\ell}(a,b)$
is indeed a subcomplex of $\MC_{\ast,\ell}(G)$, 
thanks to the fact that omitting the
first or last entries of a nondegenerate tuple always strictly
decreases its length.
Then the magnitude chains and magnitude homology of $G$ both
decompose as direct sums:
\[
    \MC_{\ast,\ast}(G)
    =
    \bigoplus_{a,b\in V(G)}\MC_{\ast,\ast}(a,b),
    \qquad\qquad
    \MH_{\ast,\ast}(G)
    =
    \bigoplus_{a,b\in V(G)}\MH_{\ast,\ast}(a,b)
\]
Note that $\MC_{\ast,\ast}(a,b)$ is nonzero if and only if
$d(a,b)<\infty$.

The description of $\MH_{\ast,\ell}(C_{m,n})$ for $\ell=0,1$
is straightforward and left to the reader. 
Compare, for example, with Proposition~2.9 of~\cite{HepworthWillerton2017}
or~Theorems~4.1 and~4.3 of~\cite{LeinsterShulman}.
Therefore it remains to prove~\Cref{theorem-mn-cycle} 
in the case $\ell\geq 2$, which is given by the following.

\begin{lemma}
    Let $\ell\geq 2$.
    Then the groups 
    $\MH_{\ast,\ell}(a_i,a_j)$
    for $0\leq i\leq j\leq m$ 
    all vanish, with the single exception of 
    $\MH_{2,m}(a_0,a_m)$,
    which is a copy of the ring $R$ spanned by the
    class $\mu$.
\end{lemma}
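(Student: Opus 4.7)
The plan is to exploit the decomposition
\[
    \MC_{\ast,\ell}(C_{m,1})
    =
    \bigoplus_{i,j}\MC_{\ast,\ell}(a_i,a_j),
\]
which respects the differential and thus descends to magnitude homology, and to compute each summand separately. First I would observe that $C_{m,1}$ is a DAG with the partial order $a_i\leq a_j$ iff $i\leq j$, so every generator $(x_0,\ldots,x_k)$ of $\MC_\ast$ is strictly increasing in this order. Distances are given by $d(a_p,a_q)=q-p$ for $p<q$, with the single exception $d(a_0,a_m)=1$; consequently every tuple from $a_i$ to $a_j$ with $(i,j)\neq(0,m)$ has length exactly $j-i$, whereas tuples from $a_0$ to $a_m$ have length either $1$ (the unique $k=1$ tuple, using the short edge) or $m$ (any $k\geq 2$ tuple, using only the long path).

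The case $i=j$ is immediate, since no tuple of positive length can begin and end at the same vertex of a DAG. For $i<j$ with $(i,j)\neq(0,m)$ and $\ell\geq 2$, one has $\MC_{\ast,\ell}(a_i,a_j)=0$ unless $\ell=j-i$, and in that case the correspondence
\[
    (a_i,a_{i_1},\ldots,a_{i_{k-1}},a_j)
    \ \longleftrightarrow\
    \{i_1,\ldots,i_{k-1}\}\subseteq\{i+1,\ldots,j-1\}
\]
identifies $\MC_{\ast,j-i}(a_i,a_j)$ with the augmented simplicial chain complex of the $(j-i-2)$-simplex, shifted by $2$ in degree. Indeed, deleting either endpoint always strictly decreases length and so is killed by the magnitude differential, while for $k=2$ the unique interior deletion yields $(a_i,a_j)$ of length $j-i=\ell$, which accounts for the augmentation map $\MC_2\to\MC_1$. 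Since the augmented simplicial chain complex of a simplex is acyclic, $\MH_{\ast,\ell}(a_i,a_j)=0$.

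For $(i,j)=(0,m)$ and $\ell=m$ the same correspondence applies, but now the crucial difference is that for $k=2$ the interior deletion of $a_{i_1}$ in $(a_0,a_{i_1},a_m)$ produces a tuple of length $1\neq m$, so this boundary term is omitted. Hence $\MC_{\ast,m}(a_0,a_m)$ is isomorphic to the \emph{unaugmented} simplicial chain complex of $\Delta^{m-2}$, shifted by $2$, whose homology is a single copy of $R$ in degree $0$, generated by any vertex class. Translated back, this gives $\MH_{2,m}(a_0,a_m)=R$ generated by $\mu=[(a_0,a_i,a_m)]$, with $\MH_{k,m}(a_0,a_m)=0$ for $k\neq 2$. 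The remaining values $\ell\neq m$ for this pair of endpoints give $\MC_{\ast,\ell}(a_0,a_m)=0$ once $\ell\geq 2$, by the length calculation above.

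The main technical point throughout is keeping track of precisely which summands in the reachability differential are killed by the passage to magnitude chains, i.e.~which deletions strictly decrease the length. The whole argument hinges on the observation that the short edge $a_0\to a_m$ provides an alternate, length-$1$ realization of the face $(a_0,a_m)$, which is what turns the \emph{augmented} (acyclic) simplicial complex of the non-extremal case into an \emph{unaugmented} one precisely when $(i,j)=(0,m)$ and $\ell=m$.
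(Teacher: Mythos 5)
Your proof is correct, but it takes a genuinely different route from the paper. The paper handles both cases by writing down explicit contracting homotopies $s$ (inserting $a_{i+1}$, resp.\ $a_1$, after the initial vertex) and verifying $sd+ds=\mathrm{Id}$ directly, with the exceptional class $\mu$ emerging from the one generator on which the would-be contraction fails. You instead observe that each summand $\MC_{\ast,\ell}(a_i,a_j)$ is, after reindexing by the set of interior indices, the simplicial chain complex of a simplex --- augmented when $(i,j)\neq(0,m)$ (because the face $(a_i,a_j)$ of length $j-i$ survives in length $\ell=j-i$ and receives the $k=2$ boundary), and unaugmented when $(i,j)=(0,m)$ and $\ell=m$ (because the short edge forces $(a_0,a_m)$ into length $1$, so the $k=2$ boundary is truncated). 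Quoting the standard homology of the (augmented) simplex then finishes the argument, with the surviving class in degree $2$ being exactly $\mu$. Your bookkeeping is right: endpoint deletions always strictly decrease length; interior deletions preserve it except in the one case you isolate; and the length dichotomy ($\ell=j-i$ generically, $\ell\in\{1,m\}$ for the pair $(a_0,a_m)$) correctly disposes of all other values of $\ell\geq 2$. What your approach buys is conceptual transparency --- the exceptional $R$ is literally the $H_0$ of a simplex whose augmentation has been deleted --- at the cost of invoking the acyclicity of the augmented simplex as a black box; the paper's contractions are more hands-on and self-contained but obscure why precisely one class survives.
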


\begin{proof}
    We first address the case where $(i,j)\neq (0,m)$.
    The generators of 
    $\MC_{k,\ast}(a_i,a_j)$ are the tuples
    \[
        (a_i,a_{i_1},\ldots,a_{i_k},a_j)
    \]
    for $k\geq 0$ and $i<i_1<\cdots<i_k<j$,
    and in particular these all have length $j-i$.
    So we may assume $\ell=j-i$.
    Define 
    $s\colon\MC_{\ast,j-i}(a_i,a_j)
    \to \MC_{\ast+1,j-i}(a_i,a_j)$
    by the rule
    \[
        s(a_i,a_{i_1},\ldots,a_{i_k},a_j)
        =\begin{cases}
            -(a_i,a_{i+1},a_{i_1},\ldots,a_{i_k},a_j)
            &
            \text{if }i_1\neq i+1 
            \\
            0
            &
            \text{if }i_1=i+1.
        \end{cases}
    \]
    Then $(sd+ds)(a_i,a_{i_1},\ldots,a_{i_k},a_j)=
    (a_0,a_{i_1},\ldots,a_{i_k},a_m)$,
    and it follows that the homology of 
    $\MC_{\ast,j-i}(a_i,a_j)$ vanishes in all degrees.

    Next we address the case of  $\MC_{\ast,\ast}(a_0,a_m)$,
    whose generators are the tuples of the form
    $(a_0,a_{i_1},\ldots,a_{i_k},a_m)$ for $k\geq 1$.
    (We no longer allow the case $k=0$ because
    $\ell(a_0,a_m)=1$ and we have assumed $\ell\geq 2$.)
    Any one of these has length $m$, and so we may assume that
    $\ell=m$.
    Define 
    $s\colon\MC_{\ast,m}(a_0,a_m)
    \to \MC_{\ast+1,m}(a_0,a_m)$
    by the rule
    \[
        s(a_0,a_{i_1},\ldots,a_{i_k},a_m)
        =\begin{cases}
            -(a_0,a_1,a_{i_1},\ldots,a_{i_k},a_m)
            &
            i_1\neq 1
            \\
            0
            &
            i_1=1.
        \end{cases}
    \]
    Then $(sd+ds)(a_0,a_{i_1},\ldots,a_{i_k},a_m)$
    is equal to $(a_0,a_{i_1},\ldots,a_{i_k},a_m)$ 
    except in the case $k=1$, $i_1=1$, when it vanishes.
    It follows that the homology of $\MC_{\ast,m}(a_0,a_m)$
    vanishes except for $\MC_{2,m}(a_0,a_m)$,
    which is generated by the class of $(a_0,a_1,a_m)$,
    this class being precisely $\mu$.
\end{proof}

\bibliographystyle{plain} 
\bibliography{Bigraded_PH_and_the_MPSS} 
% \printbibliography

\end{document}